\newcolumntype{C}[1]{>{\centering\arraybackslash}p{#1}}
\newcommand{\stkout}[1]{\ifmmode\text{\sout{\ensuremath{#1}}}\else\sout{#1}\fi}
\newcommand{\Be}{\begin{equation}}
\newcommand{\Ee}{\end{equation}}
\newcommand{\Bea}{\begin{eqnarray}}
\newcommand{\Eea}{\end{eqnarray}}
\newcommand{\Bel}{\begin{align}}
\newcommand{\Eel}{\end{align}}
\newcommand{\Beas}{\begin{eqnarray*}}
	\newcommand{\Eeas}{\end{eqnarray*}}
\newcommand{\Benu}{\begin{enumerate}}
	\newcommand{\Eenu}{\end{enumerate}}
\newcommand{\Bi}{\begin{itemize}}
	\newcommand{\Ei}{\end{itemize}}
\newcommand{\B}{\Big}
\numberwithin{equation}{section}
\newcommand{\supp} {{\rm supp}\,}
\newcommand{\dist} {{\rm dist}\,}
\theoremstyle{plain}
\newtheorem{thm}{Theorem}[section]
\newtheorem{cor}[thm]{Corollary}
\newtheorem{lem}[thm]{Lemma}
\newtheorem{prop}[thm]{Proposition}
\newtheorem{conj}[thm]{Conjecture}
\theoremstyle{remark}
\theoremstyle{definition}
\newtheorem{defn}[thm]{Definition}
\newcommand{\R}{\mathbb R}
\newcommand{\wa}{e^{it\sqrt{\mathsmaller{-}\Delta}}}
\newcommand{\inv}[1]{\,{#1}^{-1}}
\title[Circular average]
{Circular average relative to  
\\
fractal measures}
\author{Seheon Ham}
\author{Hyerim Ko}
\author{Sanghyuk Lee}
\thanks{}
\keywords{circular average, general measures}
\subjclass[2020]{42B25}
\address{Department of Mathematical Sciences and RIM, Seoul National University, Seoul 08826, Republic of Korea}
\email{seheonham@snu.ac.kr}
\email{kohr@snu.ac.kr}
\email{shklee@snu.ac.kr}
\begin{document}

\begin{abstract} 
We prove new $L^p$--$L^q$ estimates for averages over dilates of the circle with respect to fractal measures, which unify different types of  maximal estimates for the circular average.   
Our results are consequences of $L^p$--$L^q$ smoothing estimates for the wave operator relative to fractal measures. 
We also discuss similar results concerning the spherical averages. 
\end{abstract}

\maketitle

\section{introduction}\label{Sec:1}
Let $d\ge 2$. We consider the   average 
\[
\mathcal A f(x,t) =  \int_{\mathbb S^{d-1}} f(x- t y) d \sigma(y), \quad x\in \mathbb R^d.
\]
Here $d\sigma $ denotes the normalized measure on the unit sphere $\mathbb S^{d-1}$.  
We study estimates for $\mathcal A$ relative to fractal measures. 
To motivate our study, we briefly review some of the previous results. 

Stein  \cite{Stein}  ($d\ge 3$) and Bourgain \cite{B} ($d=2$)  proved that the maximal operator $f\mapsto \sup_{t>0}|\mathcal Af (\cdot, t)|$ is bounded on $L^p$ 
 if and only if  $\frac d{d-1}< p \le \infty$.    If the supremum is taken over a compact interval $I$ contained in $(0,\infty)$, the consequent maximal function   
has the $L^p$ improving property (\cite{Schlag, SS, Lee}). That is to say, the estimate 
  \begin{equation}\label{av1}
\| \sup_{t \in I} |\mathcal A f(\cdot,t)|\|_{L^q(\mathbb R^d)} \le C_d \| f\|_{L^p(\mathbb R^d)}
\end{equation}
holds for some $p<q$.  Except for a few endpoint cases, we now have a complete characterization of $p,q$ for which  \eqref{av1} holds. 
Let $\mathcal P_d\subset [0,1]\times [0,1]$ be the convex hull of the set consisting of the points  $O:=(0,0),~  Q_1:= \big(\frac{d(d-1)}{d^2+1},\frac{d-1}{d^2+1}\big),$ and $ Q_2:=  \big(\frac{d-1}{d},\frac{1}{d}  \big),~  Q_3:= \big(\frac{d-1}{d},\frac{d-1}{d} \big)$.  
 Schlag \cite{Schlag} ($d = 2$), and  Schlag and Sogge \cite{SS} ($d\ge3$) showed that the estimate \eqref{av1} holds for $(p^{-1}, q^{-1})\in\mathcal P_d\setminus
((O, Q_1)\cup [Q_1, Q_2)\cup [Q_2, Q_3] ) $ and it fails if   $(p^{-1}, q^{-1})\not \in ({ {\mathcal P_d}}\setminus \{ Q_3\})$.
The third named author \cite{Lee} proved the  borderline case $(p^{-1}, q^{-1})\in   (O, Q_1)\cup (Q_1, Q_2)\cup (Q_2, Q_3)$, however the case $(p^{-1}, q^{-1})=Q_1,  Q_2$ remain open. 
See also \cite{RS, BORSS} 
 for recent developments regarding the circular and spherical maximal functions.

Different forms of maximal estimates have also been of interest. In particular,  the estimate
\begin{equation}\label{av2}
\|\sup_{x \in \mathbb R^d} |\mathcal A f(x,\cdot)|\|_{L^q_t ( I)} \le C_d \| f\|_{L^p_\epsilon (\mathbb R^d)} 
\end{equation} 
for some $\epsilon>0$ was used to study  packing problems for the circle and sphere  \cite{Wolff, KW, Mitsis, O}. 
Here 
$\|f\|_{L^p_s(\mathbb R^d)}:=\|(1-\Delta)^\frac s 2 f\|_{L^p(\mathbb R^d)}$.     
When $d\ge3$,  Kolasa and Wolff \cite{KW} showed  \eqref{av2} with {$p=q=2$} for any $\epsilon>0$. 
However, a similar estimate in $\mathbb R^2$ turned out to be more difficult. Wolff \cite{Wolff} obtained  \eqref{av2} with  {$p=q=3$}  for $\epsilon>0$. 
Precisely, results in  \cite{Wolff, KW} are given in a different form in which  averages over the sphere (circle) were replaced by the average 
over annuli of thickness $\delta$. However, it is easy to see that those estimates are equivalent to the abovementioned  estimates in the form of \eqref{av2}. 
 It is also known that \eqref{av2} fails if $\epsilon$ is removed (see   \cite{BR, Kinney} or \cite[Proposition 2.2]{KW} for example). 
Wolff's result was extended to a variable coefficient setting by Zahl \cite{Z2}.

\begin{defn}
For $\alpha \in(0,d+1]$, we say a  non-negative Borel measure $\nu$ on $\R^{d+1}$ is $\alpha$-dimensional  if  there is a constant $C_\nu$
 such that
\begin{equation}\label{measure}
\nu(  \mathbb B^{d+1}(z,\rho) ) \le C_\nu \rho^\alpha, \quad \forall  (z,\rho) \in \mathbb R^{d+1}\times \mathbb R_+,
\end{equation} 
where $\mathbb B^{d+1} (z,\rho)=\{ z'\in \mathbb R^{d+1}: |z-z'|<\rho\}$.      
By $\mathfrak C^{d+1}(\alpha)$ we denote the class of $\alpha$-dimensional measure $\nu$.
For $\nu \in \mathfrak C^{d+1}(\alpha)$, we also define 
\[
\langle \nu\rangle_\alpha := \sup_{z\in \mathbb R^{d+1},\, \rho>0} \rho^{-\alpha} 
\nu(  \mathbb B^{d+1} (z,\rho) ). 
\]

\end{defn}

For the rest of the paper, we fix $I= (1,2)$.  We consider the estimate 
\Be
\label{nu-avr-d}
\| \mathcal Af \|_{L^q ( \mathbb R^d \times I ; d\nu)} \le C\langle\nu\rangle_\alpha^{\frac1q} \|f \|_{L^p (\mathbb R^d)}.
\Ee
Remarkably, the estimate \eqref{nu-avr-d}  unifies the maximal estimates \eqref{av1} and \eqref{av2} in a single framework.  Indeed, it is not difficult to see that  \eqref{nu-avr-d} implies the 
seemingly unrelated estimates \eqref{av1} and \eqref{av2}. One can deduce 
 the estimate \eqref{av1} from \eqref{nu-avr-d} if the latter holds  with a uniform $C$ for all  $d$-dimensional measures with $\langle \nu \rangle_d \le 1$. This can be shown by the Kolmogorov-Seliverstov-Plessner linearization argument and the Riesz representation theorem (see Lemma \ref{sup}).
Similarly, \eqref{av2} follows  if \eqref{nu-avr-d} with $L^p$ replaced by $L^p_\epsilon$  holds uniformly for all $1$-dimensional measures. 

Since the averaging operator $\mathcal A$ is translation-invariant in $x$, it is not possible to have \eqref{nu-avr-d} unless $1\le p\le q \le \infty$.  
Also, taking a specific $\alpha$-dimensional measure, one can show \eqref{nu-avr-d}  holds only if 
$   \alpha p \ge q $ (see  $(i)$ in Proposition \ref{prop:nec}). 
Thus it  is natural for \eqref{nu-avr-d} to assume  $\alpha > 1$  and $p\le q$.   

When $d\ge3$  
the estimate \eqref{nu-avr-d}  is relatively easier to obtain. There are various estimates which are straightforward consequences of the fractal Strichartz estimates for the wave equation (\cite{CHL, Harris2}). 
We discuss the matter in Section \ref{Sec:3}.   However, when $d=2$, such estimates are not enough to prove the estimate \eqref{nu-avr-d}.  
In this paper we focus on the more interesting case $d=2$.

\subsection*{The circular average}	
Let $1<\alpha\le 3$. We define $\mathcal P_2(\alpha) \subset [0,1]^2 $. If $1<\alpha\le 2$,  we set
\[ \mathcal P_2(\alpha) := \Big\{ \Big(\frac 1p, \frac1q\Big)\in [0,1]^2  : \frac 1q\le  \frac 1 p   < \frac \alpha  q,  \   \  \frac{3} p   < 1 + \frac{   \alpha - 1 } q 
\Big\},   \]
and  for $2< \alpha\le 3$,
\[ \mathcal P_2(\alpha) := \Big\{  \Big(\frac 1p, \frac1q\Big)\in [0,1]^2  :
\frac1q\le  \frac 1 p    < \frac \alpha  q, \ \  \frac{3} p   <  1 + \frac{  2\alpha - 3 } q, \ \   \frac 2    p  < 1 + \frac{ \alpha-2 }  q \Big\}. 
\]
In Section \ref{Sec:4}, we show that  \eqref{nu-avr-d} 
fails if $(\inv p, \inv q)\notin \overline{\mathcal P_2(\alpha)} $ when $d=2$.  It seems to be reasonable to conjecture that $\mathcal P_2(\alpha)$ determines, possibly except for the borderline cases,  the optimal range of $p,q$ on which 
\eqref{nu-avr-d} holds.\footnote{
As seen in the above, some of the borderline cases are actually not true.}

 \begin{conj} 
\label{conj-circle}
Let $d=2$ and $1< \alpha \le 3$. 
The estimate \eqref{nu-avr-d} 
holds for $\nu \in \mathfrak C^3(\alpha)$ if $(\inv p, \inv q) \in  \mathcal P_2(\alpha)$. 
\end{conj}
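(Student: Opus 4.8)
Since Conjecture~\ref{conj-circle} is open, what follows is a strategy --- the route that also underlies the partial results obtained below --- rather than a complete proof. The plan is to transfer \eqref{nu-avr-d} to a frequency-localized \emph{fractal local smoothing estimate} for the half-wave propagator, and then to attack the latter. In $d=2$ one has $\widehat{d\sigma}(\xi)=\sum_\pm a_\pm(\xi)\,e^{\pm i|\xi|}$ with $|\partial^\beta a_\pm(\xi)|\lesssim_\beta\jb\xi^{-1/2-|\beta|}$, so a Littlewood--Paley decomposition $f=\sum_\lambda\pl f$ (the range $\lambda\lesssim1$ being elementary, via finite propagation speed) gives, modulo terms summable in $\lambda$, $\mathcal A(\pl f)=\lambda^{-1/2}\wa(\wt P_\lambda f)$ plus an analogous $e^{-it\sqrt{-\Delta}}$-term handled identically, where $\wt P_\lambda$ is a fattened frequency projection. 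Hence \eqref{nu-avr-d} follows once, for $(\inv p,\inv q)\in\mathcal P_2(\alpha)$,
\Be\label{plan-ls}
\|\wa\pl f\|_{L^q(\R^2\times I;\,d\nu)}\le C\,\lambda^{\frac12-\delta}\,\langle\nu\rangle_\alpha^{\frac1q}\,\|f\|_{L^p(\R^2)}
\Ee
is proved for some $\delta=\delta(p,q,\alpha)>0$, after which one sums the geometric series in $\lambda$; by Littlewood--Paley rescaling and a partition of unity one may also assume $\supp\nu\subset\mathbb B^3(0,1)$ and $\langle\nu\rangle_\alpha\le1$.

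I would establish the strong-type (i.e.\ $\delta=0$) form of \eqref{plan-ls} at a handful of extreme exponents and interpolate. The trivial kernel bound gives $\|\wa\pl f\|_{L^\infty(\R^2\times I)}\lesssim\lambda^{3/2}\|f\|_{L^1(\R^2)}$. The \emph{fractal $L^2$ estimate}
\Be\label{plan-L2}
\|\wa\pl f\|_{L^2(\R^2\times I;\,d\nu)}\le C\,\lambda^{\frac{3-\alpha}2}\,\langle\nu\rangle_\alpha^{\frac12}\,\|f\|_{L^2(\R^2)}
\Ee
follows by replacing $\nu$ with the smoothed density $\nu*\phi_\lambda$ at scale $\lambda^{-1}$ --- permissible because $|\wa\pl f|^2$ has frequency support in $\mathbb B^3(0,C\lambda)$ --- together with $\|\nu*\phi_\lambda\|_{L^\infty}\lesssim\langle\nu\rangle_\alpha\lambda^{3-\alpha}$ and the local energy bound $\|\wa\pl f\|_{L^2(\R^2\times I)}\lesssim\|f\|_{L^2(\R^2)}$. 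For $q$ away from the diagonal I would feed the $\ell^2$-decoupling inequality for the light cone in $\R^3$ (Bourgain--Demeter) into a weighted bound --- passing from the Lebesgue form to the $L^q(d\nu)$ form by interpolating with $L^2$ orthogonality and the Frostman estimate \eqref{measure} applied to plates of dimensions $1\times\lambda^{-1/2}\times\lambda^{-1}$ --- and combine this with the fractal Strichartz estimates of \cite{CHL,Harris2} available for large $q$; the scaling-forced necessary constraint $\inv p<\alpha\inv q$ (see Proposition~\ref{prop:nec}) closes off the region.

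Interpolating these families and summing over $\lambda$ via \eqref{plan-ls} fills an open subregion of $\mathcal P_2(\alpha)$. On the antidiagonal $\inv p+\inv q=1$, the $L^1\!\to\!L^\infty$ and $L^2\!\to\!L^2$ endpoints already yield the portion of $\mathcal P_2(\alpha)$ with $q<\alpha$, which for $\alpha\le2$ is empty; accordingly, when $\alpha\le2$ everything of interest lies near the diagonal. The two shapes of $\mathcal P_2(\alpha)$, and the extra condition $2/p<1+(\alpha-2)/q$ present only for $\alpha>2$, mirror the extremal measures that force the necessary conditions (roughly, products of Lebesgue measure on a $1$- or $2$-dimensional plane with a lower-dimensional set). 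Together with Lemma~\ref{sup}, \eqref{plan-ls} recovers \eqref{av1} and \eqref{av2} at their respective endpoints.

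The genuine obstacle --- and the reason the conjecture is not a routine consequence of the above --- is that $\ell^2$-decoupling is lossy away from its own critical line, so reaching \emph{all} of $\mathcal P_2(\alpha)$ with a $\lambda$-power strictly below $\tfrac12$ requires, near the diagonal, the \emph{sharp} local smoothing mechanism rather than decoupling: concretely, a Wolff-type bilinear (hairbrush) estimate for the cone, in the spirit of \cite{Wolff,Z2}, upgraded so as to hold against an arbitrary $\alpha$-dimensional $\nu$ with the correct $\langle\nu\rangle_\alpha^{1/q}$ dependence, inserted into a broad--narrow induction on scales. As $\alpha\to3$ this degenerates into Sogge's local smoothing conjecture in the plane (the theorem of Guth--Wang--Zhang), and for general $\alpha$ no bilinear or local smoothing input of comparable strength against fractal measures is currently available. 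This is the main difficulty; it is why one can presently prove \eqref{nu-avr-d} only on a proper subregion of $\mathcal P_2(\alpha)$, or with an $\epsilon$-loss in the exponent of $\lambda$ --- equivalently, with a small loss of derivatives on $f$.
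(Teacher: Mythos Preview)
You correctly recognize that Conjecture~\ref{conj-circle} is open; the paper does not prove it in full, only the partial result Theorem~\ref{thm:circular-avr} (which resolves the conjecture for $\alpha\ge 3-\sqrt3$). Your reduction to a frequency-localized fractal local smoothing estimate for $e^{it\sqrt{-\Delta}}P_\lambda$ with exponent strictly below $\tfrac12$ matches the paper's route exactly (see \eqref{wave0} and Theorem~\ref{smoothing}), as does your use of the mollified measure and the Frostman bound to pass from $d\nu$ to a weight (this is Lemma~\ref{elementary} and Lemma~\ref{convert} in the paper).

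Where your strategy diverges is in the proposed engine for that estimate. You emphasize $\ell^2$-decoupling and a Wolff-type \emph{bilinear} hairbrush mechanism; the paper instead runs a Lee--Vargas type induction on scales (Proposition~\ref{trg}) whose transverse piece is handled by the \emph{trilinear} restriction estimate of Bennett--Carbery--Tao (Theorem~\ref{BCT}), interpolated against the Guth--Wang--Zhang $L^4$ local smoothing bound (Proposition~\ref{localtri}) and then transferred to the weighted setting (Proposition~\ref{w-globtri}). The narrow part is closed by the rescaling Lemma~\ref{Lscale}, and it is exactly the rescaling exponent there that produces the constraint $\tfrac3p<1+\tfrac{\alpha-1}q$ defining $\mathcal P_2(\alpha)$ for $\alpha\le2$. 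For $2<\alpha\le3$ the paper does not need the induction at all: direct interpolation of \eqref{00}, \eqref{04}, and the GWZ-derived estimate \eqref{m26} already gives the full region. Thus the trilinear-plus-GWZ route is what buys Theorem~\ref{thm:circular-avr}; a pure decoupling input would be lossier near the diagonal, as you yourself note. Your diagnosis of the remaining obstacle is reasonable in spirit, but the paper's evidence points to the missing ingredient being an improved \emph{trilinear} (or weighted-GWZ) estimate rather than a bilinear one: the residual condition $p>(6-2\alpha)/\alpha$ in Theorem~\ref{thm:circular-avr} comes precisely from the range in Corollary~\ref{globtri22}, i.e.\ from where the current trilinear input stops being strong enough.
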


The following is our main result,  which verifies  the conjecture 
for $\alpha\in [3- \sqrt 3,  3]$. 

\begin{thm}\label{thm:circular-avr}
Let $d=2$, $1< \alpha \le 3$, and $\nu \in \mathfrak C^3(\alpha)$.   
Then, the estimate \eqref{nu-avr-d}  holds if 
\begin{equation}\label{pq}
(\inv p, \inv q) \in  \mathcal P_2(\alpha) ~\text{ and }~    
p   >(6-2\alpha)/ \alpha. 
\end{equation} 
\end{thm}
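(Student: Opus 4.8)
The plan is to transfer the estimate from the circular average $\mathcal A$ to the half‑wave propagator $\wa$, establish a Littlewood--Paley‑localized smoothing estimate for $\wa$ relative to $\alpha$‑dimensional measures, and then recover \eqref{nu-avr-d} by summing a geometric series. Fix a Littlewood--Paley partition and set $\mathcal A_k=\mathcal A P_{2^k}$. Since $\widehat{\mathcal A_k f}(\xi,t)=\widehat{d\sigma}(t\xi)\,\widehat{P_{2^k}f}(\xi)$ and $\widehat{d\sigma}(\eta)=\sum_{\pm}a_\pm(\eta)e^{\pm i|\eta|}$ for $|\eta|\gtrsim1$ with $a_\pm$ a symbol of order $-(d-1)/2=-1/2$, after the routine reduction to $f$ supported in a fixed ball one writes $a_\pm(t\xi)=2^{-k/2}b_{\pm,k}(t,\xi/2^k)$ on $|\xi|\sim2^k$, $t\in I$, with $b_{\pm,k}$ in a bounded family of order‑zero symbols, and expands $b_{\pm,k}$ in a rapidly convergent Fourier series of translations and modulations to obtain
\[
\mathcal A_k f = 2^{-k/2}\sum_{\pm}\mathcal W^{\pm}_k f + R_k f ,
\]
where each $\mathcal W^{\pm}_k f$ is a bounded superposition of $x$‑translates and $t$‑modulates of $\wa\, P_{2^k}f$ and the remainder $R_k$ (from the lower‑order terms of the expansion) has a symbol of large negative order and is negligible even by crude bounds. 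Since $\mathfrak C^3(\alpha)$ and $\langle\,\cdot\,\rangle_\alpha$ are translation‑invariant, it then suffices to show that for each $(\inv p,\inv q)$ satisfying \eqref{pq} there is $\delta=\delta(p,q,\alpha)>0$ with
\[
\| \wa\, \pl f\|_{L^q(\R^2\times I;\,d\nu)}\le C\,\lambda^{\frac12-\delta}\langle\nu\rangle_\alpha^{1/q}\|f\|_{L^p(\R^2)}
\]
uniformly in dyadic $\lambda\ge1$ and $\nu\in\mathfrak C^3(\alpha)$, the low‑frequency piece $\mathcal A_0f$ being handled directly by Young's inequality, \eqref{measure}, and $p\le q$; summing yields $\|\mathcal A f\|_{L^q(d\nu)}\lesssim\sum_{k\ge0}2^{-\delta k}\langle\nu\rangle_\alpha^{1/q}\|P_{2^k}f\|_{L^p}\lesssim\langle\nu\rangle_\alpha^{1/q}\|f\|_{L^p}$.

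Writing $\gamma(p,q,\alpha)$ for the least exponent in $\|\wa\,\pl f\|_{L^q(d\nu)}\lesssim\lambda^{\gamma}\langle\nu\rangle_\alpha^{1/q}\|f\|_{L^p}$ (over $\nu\in\mathfrak C^3(\alpha)$), the point is to prove $\gamma(p,q,\alpha)<1/2$ on \eqref{pq} (the $\lambda^\epsilon$ losses inherent to decoupling being harmless since this inequality is strict). I would obtain this by interpolating, for fixed $\alpha$, a short list of $\alpha$‑dimensional endpoint estimates: (i) the one obtained from $\|\wa\,\pl f\|_{L^2(\R^2\times I;\,dx\,dt)}\lesssim\|f\|_{2}$ (Plancherel) and the sharp Lebesgue local smoothing estimates for the wave equation in the plane (Guth--Wang--Zhang) via the constancy bound $\int_B|G|^q\,d\nu\lesssim\langle\nu\rangle_\alpha\lambda^{3-\alpha}\int_B|G|^q\,dx$, valid for $G$ frequency‑localized at scale $\lambda$ by \eqref{measure} and local constancy at scale $\lambda^{-1}$; (ii) the fixed‑time dispersive bound $\|\wa\,\pl f\|_{L^\infty(\R^2\times I)}\lesssim\lambda^{3/2}\|f\|_{L^1}$, which dominates $L^\infty(d\nu)$ for every $\alpha$; and, crucially, (iii) a genuine \emph{fractal local smoothing estimate}, proved by decomposing the $\lambda$‑cone into $\lambda^{-1/2}$‑caps and running a fractal refinement of $\ell^2$‑decoupling for the cone (or, alternatively, a broad--narrow induction on scales), in which \eqref{measure} is used to count, on each $\lambda^{-1/2}$‑ball, the caps that contribute and the $\alpha$‑dimensional distribution of the mass of $\nu$ --- it is here that the curvature of the cone, and not mere constancy, must be exploited. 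Interpolating (i)--(iii) and computing the resulting $\gamma$, one finds $\{\gamma(p,q,\alpha)<1/2\}$ to be the interior of $\mathcal P_2(\alpha)\cap\{p>(6-2\alpha)/\alpha\}$: near the diagonal $p=q$, estimate (iii) improves the exponent coming from mere constancy by a factor $1/\alpha$, yielding $\lambda^{(3-\alpha)/(\alpha p)}$, and combined with the $\lambda^{-1/2}$ from the order of $\widehat{d\sigma}$ this is summable exactly when $p>(6-2\alpha)/\alpha$ (whereas constancy alone would only give $p>6-2\alpha$). The constraint is vacuous at $\alpha=3$ and is absorbed into $\mathcal P_2(\alpha)$ precisely for $\alpha\in[3-\sqrt3,3]$, since the largest value of $1/p$ on $\mathcal P_2(\alpha)$ is $1/(4-\alpha)$ and $1/(4-\alpha)\le\alpha/(6-2\alpha)$ is equivalent to $\alpha^2-6\alpha+6\le0$; this is why Conjecture \ref{conj-circle} is settled on that range.

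The main obstacle is step (iii). In contrast with the Lebesgue case the pointwise bound $|\widehat\nu|\lesssim\langle\nu\rangle_\alpha$ is useless, and because the light cone in $\R^3$ is flat along each null ray the extremal configuration is a measure $\nu$ concentrating near a null ray, which must be tamed by playing the single non‑vanishing curvature of the cone off against the ball condition on $\nu$. Concretely this calls for a weighted (fractal) form of the Bourgain--Demeter $\ell^2$‑decoupling for the cone --- or, alternatively, a polynomial‑method/broad--narrow induction on scales in which both the number of $\lambda^{-1/2}$‑caps a $\lambda^{-1/2}$‑ball can see and the $\alpha$‑dimensional mass distribution of $\nu$ enter --- and arranging the induction to close with the sharp exponent is what pins down the range \eqref{pq} and, in particular, the threshold $\alpha=3-\sqrt3$. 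A secondary, purely technical, issue is the careful bookkeeping in the symbol‑to‑exponential reduction and in the treatment of tails, so that one may assume throughout that $\nu$ is a finite measure, supported in a fixed ball, with $\nu(\R^3)\lesssim\langle\nu\rangle_\alpha$.
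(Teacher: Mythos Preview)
Your overall architecture matches the paper's: reduce $\mathcal A$ to the half-wave propagator via the asymptotics of $\widehat{d\sigma}$, take a Littlewood--Paley decomposition, and show that each dyadic piece obeys an $L^p\to L^q(d\nu)$ bound with exponent strictly below $1/2$. For $2<\alpha\le 3$ the paper argues essentially by your (i) and (ii), but it also needs the elementary kernel bound $\|\mathfrak A_j f\|_{L^1(d\nu)}\lesssim \langle\nu\rangle_\alpha\, 2^{(3-\alpha)j}\|f\|_1$, which you omit; without it your interpolation cannot reach the diagonal vertex $(1/(4-\alpha),1/(4-\alpha))$ of $\mathcal P_2(\alpha)$, since (i) only covers $2\le p=q\le 4$. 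That is a minor omission.

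The genuine gap is step (iii), needed precisely when $1<\alpha\le 2$, and you yourself flag it as ``the main obstacle'' before leaving it as a program. Neither of your two proposed routes is executed. A ``fractal refinement of $\ell^2$-decoupling'' that beats the constancy loss $\lambda^{(3-\alpha)/q}$ is not an existing theorem, and the diagonal exponent $(3-\alpha)/(\alpha p)$ you quote is not derived from any mechanism --- it is reverse-engineered from the target threshold $p>(6-2\alpha)/\alpha$. The paper's actual argument is a Lee--Vargas induction on scales: one defines $Q(\lambda)$ as the supremum of the $L^p\to L^q(\omega)$ norm over \emph{all} phases in a $C^3$-neighborhood of $\theta\mapsto\theta^2/2$ and \emph{all} weights $\omega\in\Omega(\alpha)$, decomposes angularly at two intermediate scales $K_2\ll K_1\ll 1$, and bounds the broad (transversal) part by the \emph{trilinear} restriction theorem of Bennett--Carbery--Tao followed by constancy. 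The narrow part is handled by a rescaling lemma: if $\widehat g$ sits in an angular sector of width $h$, parabolic rescaling sends $\omega$ to another element of $\Omega(\alpha)$ at the cost of $h^{(\alpha-1)/q-3/p}$, where $\alpha-1=2\alpha-\kappa(\alpha)$ records how the $\alpha$-dimensional condition transforms under the anisotropic dilation with ratios $h^2:h:1$. It is this rescaling gain --- not cap-counting on $\lambda^{-1/2}$-balls --- that closes the induction exactly when $3/p<1+(\alpha-1)/q$, while the range on which the weighted trilinear bound yields $\gamma<1/2$ supplies the additional constraint $p>(6-2\alpha)/\alpha$. Your broad--narrow instinct is therefore on target, but the two concrete ingredients you are missing are the trilinear (rather than decoupling) input for the broad part, and the precise parabolic-rescaling behavior of $\alpha$-dimensional weights for the narrow part.
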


Note that $4-\alpha \ge(6-2\alpha)/ \alpha$ if $\alpha\ge 3- \sqrt 3$ and $p>4-\alpha$ whenever $(1/p, 1/q)\in \mathcal P_2(\alpha)$. 
As a corollary we obtain $L^p(\mathbb R^2)$--$L^q(d\mu)$ estimate for the circular maximal function relative to  fractal measures.
\begin{cor}\label{thm:circular}
Let $1< \alpha \le 2$ and $\mu \in \mathfrak C^2(\alpha)$. 
For $p,q$ satisfying \eqref{pq},  we have
\begin{equation}\label{est-circ}
\big\|   \sup_{t \in I} |\mathcal A  f (\cdot,t)| \big\|_{L^q(d\mu)}   \le C \langle \mu\rangle_\alpha^{\frac 1q} \|f \|_{L^p(\mathbb R^2)} .
\end{equation}
\end{cor}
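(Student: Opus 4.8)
The plan is to deduce the maximal estimate \eqref{est-circ} from the fractal averaging estimate of Theorem \ref{thm:circular-avr} by lifting a measure $\mu$ on $\mathbb R^2$ to a suitable measure $\nu$ on $\mathbb R^2\times I$ and invoking the linearization already alluded to in the paragraph following \eqref{nu-avr-d} (namely Lemma \ref{sup}). Concretely, I would first fix $p,q$ as in \eqref{pq} with $1<\alpha\le 2$ and $\mu\in\mathfrak C^2(\alpha)$. By the Kolmogorov--Seliverstov--Plessner linearization one replaces $\sup_{t\in I}|\mathcal Af(\cdot,t)|$ by $|\mathcal Af(\cdot,t(\cdot))|$ for an arbitrary measurable function $t:\mathbb R^2\to I$, and it suffices to bound $\|\mathcal Af(\cdot,t(\cdot))\|_{L^q(d\mu)}$ uniformly in the choice of $t$. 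Then define the pushforward measure $\nu$ on $\mathbb R^2\times I$ by $\nu(E)=\mu(\{x:(x,t(x))\in E\})$, i.e. $\int g\,d\nu=\int g(x,t(x))\,d\mu(x)$; with this choice $\|\mathcal Af(\cdot,t(\cdot))\|_{L^q(d\mu)}=\|\mathcal Af\|_{L^q(\mathbb R^2\times I;d\nu)}$, so the desired bound follows from \eqref{nu-avr-d} once we check $\nu\in\mathfrak C^3(\alpha)$ with $\langle\nu\rangle_\alpha\lesssim\langle\mu\rangle_\alpha$.

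The key step is the dimensionality bound for $\nu$. Given a ball $\mathbb B^{3}((x_0,t_0),\rho)\subset\mathbb R^2\times\mathbb R$, its $\nu$-measure is $\mu$ of the set of $x$ with $|x-x_0|<\rho$ and $|t(x)-t_0|<\rho$, which is contained in the two-dimensional ball $\mathbb B^2(x_0,\rho)$; hence $\nu(\mathbb B^3((x_0,t_0),\rho))\le\mu(\mathbb B^2(x_0,\rho))\le\langle\mu\rangle_\alpha\rho^\alpha$. Since $1<\alpha\le 2<3$, the exponent $\alpha$ is admissible in the sense of the definition of $\mathfrak C^3(\alpha)$, so $\nu\in\mathfrak C^3(\alpha)$ with $\langle\nu\rangle_\alpha\le\langle\mu\rangle_\alpha$. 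Plugging this into \eqref{nu-avr-d} gives $\|\mathcal Af(\cdot,t(\cdot))\|_{L^q(d\mu)}\le C\langle\mu\rangle_\alpha^{1/q}\|f\|_{L^p(\mathbb R^2)}$ with $C$ independent of $t$, and taking the supremum over measurable $t$ (after the standard reduction making $\sup_{t\in I}|\mathcal Af(x,t)|$ approximable by such linearizations, justified exactly as in Lemma \ref{sup}) yields \eqref{est-circ}.

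The only real points requiring care are measurability and the passage from the pointwise supremum to the linearized operator: one should first restrict to $f\ge 0$ continuous with compact support so that $(x,t)\mapsto\mathcal Af(x,t)$ is continuous and the supremum over $t\in I=(1,2)$ is attained (or approximated) by a measurable selection $t(\cdot)$, apply the above, and then remove the restriction by density and the fact that $\mathcal A$ maps $L^p$ boundedly into $L^q(d\nu)$ uniformly. I expect this step — the measurable-selection and density argument — to be the main (though routine) obstacle; the dimension count for $\nu$ is immediate, and everything substantive is already contained in Theorem \ref{thm:circular-avr}. Note also that the hypothesis $\alpha\le 2$ is exactly what makes $\mu$ a genuine fractal measure on $\mathbb R^2$ while keeping $\alpha$ in the admissible range $(1,3]$ for the lifted measure, and that the condition \eqref{pq} is inherited verbatim from the theorem.
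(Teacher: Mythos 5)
Your proposal is correct and follows essentially the paper's own route: the paper's Lemma \ref{sup} performs exactly this Kolmogorov--Seliverstov--Plessner linearization and lifts $\mu$ to an $\alpha$-dimensional measure $\nu$ on $\mathbb R^{3}$ carried by the graph of the linearizing function (there constructed via the Riesz representation theorem rather than as a direct pushforward), after which Theorem \ref{thm:circular-avr} applies; your dimension count $\nu(\mathbb B^{3}((x_0,t_0),\rho))\le\mu(\mathbb B^{2}(x_0,\rho))\le\langle\mu\rangle_\alpha\rho^\alpha$ is the same as the paper's. The only cosmetic difference is that you invoke the global estimate \eqref{nu-avr-d} directly, whereas the paper states its lemma with a spatial cutoff and then appeals to the globalization argument of Lemma \ref{globtri}.
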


A modification of the examples which give the necessary conditions of \eqref{nu-avr-d} (Proposition \ref{prop:nec})  shows that \eqref{est-circ} holds only if 
$(\inv p, \inv q) \in  \overline{\mathcal P_2(\alpha)}$. 
Consequently, Corollary \ref{thm:circular} establishes  boundedness on sharp range for $\alpha\in (3- \sqrt 3,  2]$.
 
 $L^p(\mathbb R^d)$--$L^q(d\mu)$ estimate for $\mathfrak Mf:=\sup_{t \in I} |\mathcal A  f(\cdot, t) | $  has been utilized
 to study problems in geometric measure theory.   
$L^2(\mathbb R^d)$--$L^2(d\mu)$ estimate was studied by Mitsis \cite{Mitsis} for  $1<\alpha\le d$,  $d\ge3$.
 D. Oberlin and R. Oberlin \cite{OO} obtained 
$L^{2}_{(1-\alpha)/2+}(\mathbb R^d) $--$ L^q(d\mu)$ bound on $\mathfrak Mf$ when $1<q<2$ and $0<\alpha <(d-1)/2$. 
Also,  for $d\ge3$, $L^p(d\mu_1)$--$L^p(d\mu_2)$ estimate was discussed 
in Iosevich et al. \cite{IKSTU} with  $\alpha_i$-dimensional measure $\mu_i$,  $1<\alpha_i \le d$, $i=1,2$. Those estimates were used to study the sphere packing problem, Hausdorff  dimension of the pinned distance set, and the divergence set of the solution to the wave equation (see also \cite{HKL}).    

Compared with the previous results,  it is remarkable that  
Corollary \ref{thm:circular} establishes the sharp $L^p$ improving property for the circular maximal function relative to some $\alpha$-dimensional measures. As far as the authors  are aware, no such results have appeared before. In \cite{IKSTU} non-sharp local smoothing estimates were combined with  a duality argument. However,  the estimates in Corollary \ref{thm:circular} can not be obtained
 by the approach in \cite{IKSTU} even if combined with the sharp local smoothing estimate \cite{GWZ}.   Nevertheless, such estimates are easier to prove (see Corollary \ref{3dm}) in higher dimensions.

Furthermore, one can use  the  estimate \eqref{est-circ} (with $p=q$) to prove the circle packing theorem which was shown in \cite{Wolff3}:  
 \emph{Suppose $F\subset \mathbb R^2$ is a Borel set of Hausdorff dimension exceeding $1$ and $E$ is  a compact  set 
 such  that $E\subset \mathbb R^2$ contains circles centered at each point in $F$. Then $E$ has positive Lebesgue measure.} 
 
Corollary \ref{thm:circular} with $\alpha=2$ recovers the  results on the circular maximal function except for the endpoint cases  (\cite{Schlag, SS, Lee}).

\subsubsection*{Local smoothing estimate relative to $ {\nu}
\in \mathfrak C^3(\alpha)$.}
In order to prove Theorem \ref{thm:circular-avr},  we  consider  $L^p_\gamma$--$L^q(d\nu)$  estimate    for the wave operator
\[  	e^{ i t \sqrt{-\Delta}}  f(x)  = \int e^{i(x\cdot \xi+t |\xi|)} \widehat f(\xi)\,d\xi  . \]
For $2<\alpha \le 3$,  we  obtain the estimate \eqref{nu-avr-d}  
by relying  mainly on $L_\gamma^2$--$L^q(d\nu)$ estimate for $e^{ i t \sqrt{-\Delta}}$, which is to be discussed in Section \ref{Sec:3}.
However,  for $1<\alpha \le 2$, such $L_\gamma^2$--$L^q(d\nu)$ estimates are not enough for our purpose. 
Instead,  we exploit estimates for the wave operator  
relative to $\alpha$-dimensional measure $\nu$:
\begin{equation}\label{LS-AAA}
	\B(   \int_{\mathbb B^2(0,1)\times I} \big|\wa f(x)\big|^q   d\nu(x,t) \B)^{1/q} \le 
	C \langle \nu\rangle_\alpha^{\frac1q}  \|f \|_{L^p_{\gamma}  (\mathbb R^2)}  .	\end{equation}
Sogge  conjectured that \eqref{LS-AAA} holds with  $d\nu(x,t)=dxdt$ 
for $ p=q\ge {2d}/(d-1)$ if $\gamma>  (d-1)(\frac12 -\frac1p)-\frac1p$. 
The  conjecture was studied by various authors \cite{Wolff3, LV,  BD}.
Recently,  Guth, Wang, and Zhang \cite{GWZ} confirmed the conjecture in two dimensions while it remains open in higher dimensions. 
See also \cite{MSS, SS}  for earlier results.

For $1<\alpha \le 2$,  Theorem \ref{thm:circular-avr} follows from the next theorem, which we prove by combining  the sharp $L^p$ local smoothing estimate \cite{GWZ}  and the trilinear 
restriction estimate  for the cone (see \cite{BCT, BG,LV}).  

\begin{thm}\label{smoothing}
Let $1<\alpha \le 2$ and $\nu \in \mathfrak C^3(\alpha)$. If $p,q \ge1$ satisfy \eqref{pq},  then  the estimate   \eqref{LS-AAA} holds  
for some $\gamma<1/2$. 
	\end{thm}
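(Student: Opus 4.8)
The plan is to freeze a dyadic frequency scale and run a Bourgain--Guth broad/narrow analysis: the broad contribution is controlled by the trilinear restriction estimate for the cone, the narrow contribution by parabolic rescaling and induction on the scale, and the remaining, essentially linear, pieces by the sharp local smoothing estimate of \cite{GWZ}. By a Littlewood--Paley decomposition it suffices to establish, for $\hat f$ supported in $\{|\xi|\sim\lambda\}$, the frequency-localized estimate
\[
\B(\int_{\Bb^2(0,1)\times I}|\wa f(x)|^q\,d\nu(x,t)\B)^{1/q}\le C_\epsilon\,\langle\nu\rangle_\alpha^{1/q}\,\lambda^{\beta+\epsilon}\,\|f\|_{L^p(\R^2)}
\]
for \emph{some} $\beta=\beta(p,q,\alpha)<1/2$, uniformly in $\lambda\ge1$ and $\nu\in\mathfrak C^3(\alpha)$; summing the dyadic pieces then gives \eqref{LS-AAA} with any $\gamma\in(\beta,1/2)$. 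We may assume $\langle\nu\rangle_\alpha=1$ and $\supp\nu$ bounded. The mechanism for bringing $\nu$ into play is the locally constant property: the space-time Fourier support of $\wa f$ lies in an $O(1)$-neighborhood of the $\lambda$-cone, so $|\wa f|$ is essentially constant on balls of radius $\lambda^{-1}$; covering $\Bb^2(0,1)\times I$ by $\sim\lambda^3$ of them, using $\nu(B)\le\lambda^{-\alpha}$ and $\sum_B\nu(B)\lesssim1$, and a H\"older step in the sum over balls, one upgrades, for any $q_0\ge q$, a space-time $L^{q_0}$ bound on $\wa f$ to an $L^q(d\nu)$ bound at the cost of a factor $\lambda^{(3-\alpha)/q_0}$. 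With $q_0=q$ and \cite{GWZ} this already gives the exponent $(3-\alpha)/q+\max(\tfrac12-\tfrac2q,0)$, which is $<\tfrac12$ on part of $\mathcal P_2(\alpha)$ but not, when $\alpha<2$, near the diagonal vertex $p=q=4-\alpha$; it is exactly this gap that forces the finer argument and the hypothesis $p>(6-2\alpha)/\alpha$.

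\emph{Broad/narrow split.} Fix a large $K$, partition $\{|\xi|\sim\lambda\}$ into angular sectors $\tau$ of aperture $\sim K^{-1}$, and write $f=\sum_\tau f_\tau$. At each point $(x,t)$ either one sector dominates, $|\wa f(x)|\lesssim K^{O(1)}\max_\tau|\wa f_\tau(x)|$, or there are three pairwise transversal sectors $\tau_1,\tau_2,\tau_3$ with $|\wa f(x)|\lesssim K^{O(1)}\prod_{j=1}^3|\wa f_{\tau_j}(x)|^{1/3}$. For the broad term we invoke the trilinear restriction estimate for the light cone in $\R^3$ (\cite{BCT, BG, LV}), which --- owing to the null structure of the cone --- is valid in a range of exponents strictly wider than the generic multilinear restriction of \cite{BCT}, and then apply the ball-counting upgrade above to pass from a space-time Lebesgue bound on $\prod_j\wa f_{\tau_j}$ to the relevant $L^q(d\nu)$ norm of the broad part. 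Since $p>2$ throughout $\mathcal P_2(\alpha)$, the $L^2$ input required on each piece by trilinear restriction is \emph{not} available from $\|f_{\tau_j}\|_p$ by H\"older or Bernstein; instead it is supplied sector by sector by parabolic rescaling of $\tau_j$ to the $\lambda K^{-2}$-cone followed by \cite{GWZ} --- this is the step where the two ingredients of the theorem actually meet. Carrying out the bookkeeping yields a bound $\lesssim_\epsilon K^{O(1)}\lambda^{\beta_{\mathrm{br}}+\epsilon}\|f\|_p$ with $\beta_{\mathrm{br}}<\tfrac12$ exactly on $\mathcal P_2(\alpha)\cap\{p>(6-2\alpha)/\alpha\}$.

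\emph{Narrow term, and filling the region.} A single sector $\tau$, under the standard affine rescaling of the cone, becomes the full $\lambda K^{-2}$-cone; the pushforward of $\nu$ still obeys \eqref{measure} with the same exponent $\alpha$ and a constant changed only by a fixed power of $K$, while $\|f_\tau\|_p$ and the $d\nu$-integral acquire explicit Jacobian powers of $K$. Applying the claimed estimate at frequency $\lambda K^{-2}$ (induction on the scale) bounds the narrow term by $\lesssim K^{O(1)}(\lambda K^{-2})^{\beta+\epsilon}\|f\|_p$; since $\beta<\tfrac12$, fixing $K$ so large that $K^{O(1)-2\beta}\le\tfrac12$ absorbs this contribution. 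Finally, interpolating the near-diagonal estimates obtained in this way with the trivial bounds $\|\wa f\|_\infty\lesssim\lambda^2\|f\|_1$ and $\|\wa f\|_{L^2_{x,t}}\lesssim\|f\|_2$ (each pushed to $L^q(d\nu)$ by the ball-counting) covers all of $\mathcal P_2(\alpha)\cap\{p>(6-2\alpha)/\alpha\}$, which is the region \eqref{pq}.

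\emph{Main obstacle.} The crux is the exponent accounting in the broad estimate: one must verify that the trilinear cone restriction, combined with the $\alpha$-dimensional ball-counting, really delivers $\beta_{\mathrm{br}}<\tfrac12$ precisely on the stated region --- the threshold $p=(6-2\alpha)/\alpha$, and hence the value $\alpha=3-\sqrt3$ in Theorem~\ref{thm:circular-avr}, should fall out of this computation --- while simultaneously the narrow recursion must close with one admissible choice of $K$ and without eroding $\gamma$ back up to $1/2$. The $p>2$ passage on each sector and the behavior of $\langle\nu\rangle_\alpha$ under the anisotropic rescaling are the two places where genuine care is required.
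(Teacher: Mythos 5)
Your architecture is the same as the paper's (a Lee--Vargas style broad--narrow decomposition, trilinear cone restriction for the broad part, the sharp local smoothing of \cite{GWZ}, insertion of the fractal measure through a Fourier-support/locally-constant lemma costing $\lambda^{(3-\alpha)/q}$, and anisotropic sector rescaling plus induction on scales for the narrow part), but the two places where the hypothesis \eqref{pq} must actually be produced are left as assertions, and the mechanisms you offer for them do not work as stated. For the broad term, supplying the $L^2$ input ``sector by sector by parabolic rescaling of $\tau_j$ followed by \cite{GWZ}'' cannot convert $L^p$ data with $p>2$ into the $L^2$ input that the Bennett--Carbery--Tao trilinear inequality requires; no rescaling does that. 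What is actually done is an interpolation at the trilinear level: the $L^2\times L^2\times L^2\to L^1$ bound from \cite{BCT}, the $L^4$ local smoothing estimate of \cite{GWZ} (used at $(\frac1p,\frac1q)=(0,0)$ and $(\frac14,\frac14)$), and a kernel $L^1\to L^\infty$ bound give Proposition \ref{localtri} on the range $\frac1q\le\min(\frac1p,\frac1{3p}+\frac16,\frac2{3p'})$ with exponent $\frac12+\frac1p-\frac3q$; after globalization (Lemma \ref{globtri}) and insertion of the weight via Lemma \ref{elementary} (Proposition \ref{w-globtri}) the exponent becomes $\frac12+\frac1p-\frac\alpha q$, and only then does a H\"older step along $\frac1p=\frac\alpha q$ yield the thresholds $p>(6-2\alpha)/\alpha$ (for $1<\alpha\le\frac32$) and $p>(2\alpha+3)/(2\alpha)$ (for $\frac32<\alpha\le2$) of Corollary \ref{globtri22}. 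Your appeal to a ``wider-than-BCT'' trilinear range coming from null structure is neither needed nor what carries the argument.

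For the narrow term, ``fix $K$ so large that $K^{O(1)-2\beta}\le\frac12$'' is not a proof: whether the net power of $K$ is favorable is precisely where the second constraint in \eqref{pq} enters, so the closure cannot be taken for granted. Concretely (Lemma \ref{Lscale}), a ball of radius $r$ pulls back under the sector rescaling to a box of dimensions comparable to $K^{-2}r\times K^{-1}r\times r$, so the rescaled weight is $\alpha$-dimensional only after renormalizing by the factor $K^{-(\kappa(\alpha)-2\alpha+3)}$ with $\kappa(\alpha)=\min\{3,\alpha+1,2\alpha\}$; combining this with the Jacobian factor $K^{3/p}$ on $\|f_\tau\|_p$ and the $\ell^p\subset\ell^q$ summation over sectors, the narrow contribution carries (for $1<\alpha\le2$, up to the Bourgain--Guth prefactor) the factor $K^{\frac3p-\frac{\alpha-1}{q}-2\gamma}$, and the recursion closes for some $\gamma<\frac12$ if and only if $\frac3p<1+\frac{\alpha-1}{q}$, i.e. the remaining condition defining $\mathcal P_2(\alpha)$ (see \eqref{gamma}); with an unspecified $K^{O(1)}$ the claim is circular. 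There is also a structural obstruction to your setup: you localize to $\Bb^2(0,1)\times I$ and assume $\nu$ has bounded support, but the rescaled measure is no longer supported in a fixed ball, so the inductive hypothesis cannot be applied at scale $\lambda K^{-2}$ as you state it. The paper avoids this by replacing $\nu$ with the globally defined weight class $\Omega(\alpha)$ (Lemma \ref{convert}), proving global-in-$x$ estimates via kernel decay, and formulating the induction quantity \eqref{QR} over the stable class of phases $\mathfrak S(\epsilon_\circ)$, all of which your sketch would need to incorporate before the induction is legitimate.
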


For the proof of Theorem \ref{thm:circular-avr}  we need only to obtain  the estimate \eqref{LS-AAA} for $\gamma<1/2$ (see Section \ref{Sec:3}). 
However,  it is of independent interest  to characterize $p,q$ and $\gamma$ for which  \eqref{LS-AAA} holds. 
In this regard, we can obtain the sharp  estimate on a  restricted range of $p,q$.  To state our result, we set 
\begin{equation}\label{ka}
\kappa (\alpha)  = \begin{cases}
\, 3 ,~ & ~   \hfill 2<\alpha \le 3,\\
\, \alpha+1 , ~ & ~   \hfill 1<\alpha \le 2, \\
\,  2\alpha, 
~ & ~   \hfill 0<\alpha \le 1.
\end{cases}
\end{equation}

\begin{thm}\label{LS-est} 
		Let $0<\alpha \le 3$ and  $\nu \in \mathfrak C^3(\alpha)$. 
	 If $
	\frac1q \le \min\big( \frac1p , ~\frac1{3p}+\frac16,~ \frac2{3p'} \big) $ and $\frac1p +    \frac{\kappa(\alpha)}{q}  \le 1 
	$, then \eqref{LS-AAA} 
	holds for $\gamma >\frac12 +\frac1p -\frac{\alpha}{q}$. 
\end{thm}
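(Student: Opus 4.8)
The plan is to establish \eqref{LS-AAA} for $\gamma > \tfrac12 + \tfrac1p - \tfrac\alpha q$ by interpolating between two extreme classes of estimates and then checking that the hypotheses pin down exactly the claimed region. The two building blocks are: (a) an $L^2$-based estimate, obtained from a fractal Strichartz / local energy decay bound for the wave operator against $\nu \in \mathfrak C^3(\alpha)$ — this is the material of Section~\ref{Sec:3} that the excerpt promises — which will give \eqref{LS-AAA} with $p=q=2$ (or along the line $p=q$) at the critical regularity exponent; and (b) the trivial $L^\infty \to L^\infty$ bound $\|\wa f\|_{L^\infty} \lesssim \|f\|_{L^p_\gamma}$ for $\gamma$ large, which by Hölder against a finite measure on $\mathbb B^2(0,1)\times I$ of total mass $\lesssim \langle\nu\rangle_\alpha$ costs a factor $\langle\nu\rangle_\alpha^{1/q}$, plus the Bernstein-type $L^p\to L^q$ gains coming from frequency localization $P_\lambda f$ and summing a geometric series in $\lambda$ once $\gamma$ exceeds the scaling-critical value. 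The parameter $\kappa(\alpha)$ encodes which of these mechanisms is binding: for $2<\alpha\le 3$ one has $\kappa=3$, reflecting that the full $(d+1)$-dimensional spacetime measure is available and the estimate reduces to the unweighted local smoothing picture; for $\alpha\le 2$ the value $\kappa(\alpha)=\alpha+1$ or $2\alpha$ reflects that the measure "sees" at most a curve's worth (resp.\ less) of the lightcone.

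First I would dyadically decompose $f = \sum_\lambda P_\lambda f$ and reduce, by the standard rescaling $x\mapsto \lambda x$, $t\mapsto \lambda t$ (which transforms $\nu$ into another measure with comparable $\alpha$-dimensional constant, up to the factor $\lambda^{-\alpha}$ built into $\langle\cdot\rangle_\alpha$), to proving a single-frequency estimate
\[
\Big(\int_{\mathbb B^2(0,1)\times I} |\wa P_\lambda f|^q \, d\nu\Big)^{1/q} \lesssim \langle\nu\rangle_\alpha^{1/q}\, \lambda^{\frac12+\frac1p-\frac\alpha q + \varepsilon}\, \|P_\lambda f\|_{L^p(\mathbb R^2)}
\]
with an $\varepsilon$-loss that can be absorbed into "$\gamma > \tfrac12+\tfrac1p-\tfrac\alpha q$" after summing over $\lambda$. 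For the fixed-frequency piece, I would cover $\mathbb B^2(0,1)\times I$ by a finitely-overlapping family of $\lambda^{-1/2}\times\lambda^{-1/2}\times\lambda^{-1/2}$ balls (the natural scale adapted to the cone after rescaling), use the $\alpha$-dimensional hypothesis to count: the number of such balls meeting the support of $\wa P_\lambda f$ within a fixed unit ball is controlled, and more importantly $\nu$ of each is $\lesssim \langle\nu\rangle_\alpha \lambda^{-\alpha/2}$. On each such ball $\wa P_\lambda f$ is essentially constant (after a further wave-packet decomposition), so one passes from the $\nu$-integral to a counting estimate that combines (i) an $L^2$ orthogonality / local smoothing input for the number of relevant wave packets and (ii) Bernstein. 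Matching the two inputs at the interpolation exponent determined by $\tfrac1q\le\min(\tfrac1p,\tfrac1{3p}+\tfrac16,\tfrac2{3p'})$ and $\tfrac1p+\tfrac{\kappa(\alpha)}q\le 1$ yields the stated $\gamma$.

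The three constraints on $(\tfrac1p,\tfrac1q)$ should be read as follows, and verifying each is where I would spend care. The line $\tfrac1q=\tfrac2{3p'}$ is the sharp local smoothing line for $d=2$ (Guth--Wang--Zhang), i.e.\ the place where the $L^p\to L^p$ smoothing exponent $\tfrac12-\tfrac1p-\gamma$ is extremal; the line $\tfrac1q=\tfrac1{3p}+\tfrac16$ comes from the trilinear cone restriction estimate (Bourgain--Guth, Lee--Vargas) controlling the broad part; and $\tfrac1q\le\tfrac1p$ is forced by $x$-translation invariance as already noted in the excerpt. The final inequality $\tfrac1p+\tfrac{\kappa(\alpha)}q\le 1$ is a dimensional constraint ensuring the $\nu$-counting on $\lambda^{-1/2}$-balls does not lose more than the advertised $\lambda^{\alpha/q}$; this is the step I expect to be the main obstacle, because it requires a genuinely refined estimate on how wave packets at scale $\lambda$ are distributed relative to an $\alpha$-dimensional measure — essentially a Kakeya/Nikodym-type bound weighted by $\nu$ — rather than a soft counting. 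I would handle the regime $2<\alpha\le 3$ separately and more easily (there $\kappa=3$ and the measure is effectively Lebesgue up to constants, so the unweighted sharp local smoothing estimate suffices after Hölder), and reserve the wave-packet/Kakeya analysis for $\alpha\le 2$, where $\kappa(\alpha)$ is strictly less than $3$ and the fractal structure is doing real work.
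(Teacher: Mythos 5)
There is a genuine gap: the step you yourself flag as ``the main obstacle'' --- a weighted Kakeya/Nikodym-type bound describing how wave packets at frequency $\lambda$ meet an $\alpha$-dimensional measure --- is exactly the content of the theorem at the sharp exponent, and you do not supply it. The soft ingredients you do specify cannot reach the claimed regularity: interpolating an $L^2$-based fractal Strichartz bound with a trivial $L^\infty$ bound plus Bernstein loses strictly more than $\lambda^{\frac12+\frac1p-\frac\alpha q}$ on most of the claimed range (indeed, for $d=2$ the $L^2$-input of Theorem~\ref{3d-frac} is not even available, and the paper stresses that $L^2_\gamma$--$L^q(d\nu)$ bounds are insufficient when $\alpha\le2$), and the covering of $\mathbb B^2(0,1)\times I$ by $\lambda^{-1/2}$-balls with $\nu(\mathbb B)\lesssim\langle\nu\rangle_\alpha\lambda^{-\alpha/2}$ only converts the problem into the unproved counting estimate. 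Your reading of the hypotheses is also partly off: the line $\frac1q=\frac2{3p'}$ does not come from Guth--Wang--Zhang (that input sits on the segment $p=q\ge4$); in the paper it arises from interpolating the Bennett--Carbery--Tao trilinear estimate at $(\frac1p,\frac1q)=(\frac12,\frac13)$ with an $L^1\to L^\infty$ kernel bound at $(1,0)$ (Proposition~\ref{localtri}). Likewise, for $2<\alpha\le3$ the measure is not ``effectively Lebesgue'' and H\"older does not reduce to the unweighted case; one needs the band-limited conversion $\|F\|_{L^q(\omega)}\lesssim\lambda^{(3-\alpha)/q}\|F\|_{L^q}$ of Lemma~\ref{elementary}.

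The paper's actual route avoids a direct weighted Kakeya bound by an induction on scales (Proposition~\ref{trg22}) built on the Bourgain--Guth/Lee--Vargas broad--narrow decomposition \eqref{divideU}. The narrow parts are handled by parabolic rescaling (Lemma~\ref{Lscale}); the crucial point, which your sketch misses, is that the weight class $\Omega(\alpha)$ is stable under the anisotropic rescaling with gain $h^{2\alpha-\kappa(\alpha)}$, obtained by covering the image of a ball, a box of dimensions $h^2r\times hr\times r$, by balls of radii $h^2r$, $hr$, $r$ --- this covering computation is the true origin of $\kappa(\alpha)=\min\{3,\alpha+1,2\alpha\}$, not a statement about how $\nu$ ``sees'' the light cone. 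The broad (trilinear) part is controlled by BCT interpolated with GWZ and the $L^1\to L^\infty$ bound, then globalized (Lemma~\ref{globtri}) and weighted via Lemma~\ref{elementary} (Proposition~\ref{w-globtri}); measures are replaced by weights through mollification (Lemma~\ref{convert}). Finally, the hypothesis $\frac1p+\frac{\kappa(\alpha)}q\le1$ is not a ``dimensional counting'' condition but precisely what allows the induction to close: it makes $\frac{2\alpha-\kappa(\alpha)}q-\frac3p+2\gamma>0$ for $\gamma$ near $\frac12+\frac1p-\frac\alpha q$, so that $K_1,K_2$ can be chosen with $CK_{i-1}^{-2}K_i^{\frac{2\alpha-\kappa(\alpha)}q-\frac3p+2\gamma}\le\frac13$, yielding $\mathfrak Q(\lambda')\le C$. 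Without either this induction scheme or a proof of your weighted wave-packet counting estimate, the proposal does not establish the theorem.
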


The regularity  assumption $\gamma> \frac 12+\frac 1p-\frac \alpha q$ is sharp  
in that   \eqref{LS-AAA} fails in general if $\gamma< \frac 12+\frac 1p-\frac \alpha q$  (see Section \ref{Sec:2}).
It seems to be natural to conjecture that  the same remains valid provided that $\frac 1q \le \frac1p$ and $\frac1p + \frac{\kappa(\alpha)}{q}  \le 1 $. 
This in turn implies Conjecture \ref{conj-circle}.  When $2<\alpha \le 3$, $\frac1q\le \frac1p$, and $\frac1p +\frac3q\le 1$, the estimate  \eqref{LS-AAA} is a consequence of  the sharp local smoothing estimate \cite{GWZ}. 
In higher dimensions ($d\ge 3$),  the local smoothing
estimate in \cite{BD} and Lemma \ref{elementary} yield  an analogue of \eqref{LS-AAA}  for $d\le \alpha\le d+1$
whenever  $\gamma> \frac{d-1}{2}  +  \frac{1}{p} -\frac \alpha q $, $\frac1p +\frac{d+3}{d-1}\frac1q \le 1$, and  {$p\le q$}.

\subsubsection*{Estimate for the spherical average}
In higher dimensions  one can obtain results similar to Theorem \ref{thm:circular-avr} for the spherical average in $\mathbb R^{d+1}$, $d\ge 3$ (see Theorem \ref{maxthm-in-general}).
Such results can be shown by making use of  the Strichartz estimates for the wave equation relative to fractal measure (see Theorem \ref{3d-frac}).
The spherical average estimates in Theorem \ref{maxthm-in-general} seem to be sharp for $d=3$, while we only manage to verify the optimality  for integer $\alpha$. See Section \ref{Sec:4}.

\subsubsection*{Organization of the paper}
In Section \ref{Sec:2}, we obtain the local smoothing estimates relative to $\alpha$-dimensional measure (Proposition \ref{trg} and Proposition \ref{trg22}), which we use to prove Theorem \ref{smoothing} and Theorem  \ref{LS-est}. 
 In Section \ref{Sec:3} we prove  Theorem \ref{smoothing},  Theorem \ref{LS-est}, Theorem \ref{thm:circular-avr}, and Corollary \ref{thm:circular}, and obtain the estimates in higher dimensions.  
We discuss the sharpness of the estimate \eqref{nu-avr-d}  in Section \ref{Sec:4}.

\section{Weighted local smoothing estimates}
\label{Sec:2}

In this section, we obtain some preparatory results which we use in proving Theorem \ref{smoothing} and Theorem \ref{LS-est}. 
The main object  is to prove Proposition \ref{trg} and Proposition \ref{trg22}.
To this end, we adapt the argument   due to Vargas and one of the authors \cite{LV} which is based on  the trilinear restriction estimate. 

We decompose $e^{it\sqrt{-\Delta} } f$ into two parts: a sum of operators with small frequency supports and a product of operators with angularly separated frequency supports.
For the latter we apply the multilinear restriction estimate due to Bennett, Carbery, and Tao \cite{BCT} by making use of the fact that angular separation implies transversality on the trilinear operator.  
For the first part, we rescale the operator and apply an induction argument (see \eqref{QR}).  
However, the support of a rescaled measure is no longer contained in a bounded set, so the induction can not be carried out on a bounded set.
To get around this issue, we consider a class of weight functions which are not necessarily supported in a bounded set.

\subsection{Weighted local smoothing estimates in $\mathbb R^{2+1}$}

For $0<\alpha \le 3$, we denote by  $\Omega(\alpha)$ the class  of nonnegative measurable functions $\omega : \mathbb R^2\times I \mapsto \mathbb R$ which satisfy 
\Be
\label{omega}
\int_{\mathbb B^3(z, r)} \omega(y) dy \le  r^\alpha,  \quad \forall  (z,r) \in \mathbb R^3\times  \mathbb R_+. 
\Ee
By Littlewood-Paley decomposition, 
it is enough to consider the estimate   
\begin{equation}\label{TRg}
\| e^{it\sqrt{-\Delta}} P_\lambda f \|_{L^q(\mathbb R^2\times I;\omega )} \le C \lambda^{\gamma} \|f\|_{p}  .
\end{equation} 
Here $P_\lambda$ is the standard Littlewood-Paley projection operator given by $\widehat{P_\lambda f}(\xi) = \beta(\lambda^{-1}|\xi|)\widehat f(\xi)$ for $\beta \in \mathrm C_c^\infty((1/2,2))$ and $\| f\|_{L^q(E; \omega)}^q :=   \int_E |f(x)|^q \omega(x)dx $.

Instead of proving \eqref{TRg} directly, we consider a modified operator to facilitate rescaling. We consider 
\[
\psi_{\mathsmaller{0}}(\theta) := \sqrt{1+\theta^2} -1.
\]
Note that $\psi_{\mathsmaller{0}}$ can be regarded  as a small perturbation of $\theta^2/2$.

For $\lambda\ge 1$ and $J \subset [-1,1]$, let  
\[ A_\lambda(J) =  \{  (\eta, \rho) : \lambda/4 \le \eta \le 4\lambda,~ \theta:=\rho/\eta \in J \}.\]
We define 
\begin{equation}\label{TR}
T_\lambda^\phi  g (x,t) = \int_{A_\lambda([-1,1])} e^{ i (x,t) \cdot (\eta,\rho,  \eta \phi(\rho/\eta))} \beta (\lambda^{-1}\eta  )\beta_0 ( |\rho|/\eta ) 
\widehat g(\eta,\rho) d\eta d\rho  ,  
\end{equation} 
where  $\beta_0$ is a smooth function supported in $[0,2]$.

\begin{prop}\label{trg}
Let  $1<\alpha\le2$ and $\omega \in \Omega(\alpha)$.
If $p,q \ge1$ satisfy \eqref{pq} and $\lambda \gg 1$, then for some $\gamma<1/2$ we have
\begin{equation}\label{TR-est}
\| T_\lambda^{\psi_0} g \|_{L^q(\mathbb R^2\times I; \omega )} \le C \lambda^{\gamma} \|g\|_{p}.
\end{equation}
\end{prop}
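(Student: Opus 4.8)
The plan is to establish \eqref{TR-est} by the induction-on-scales scheme of Vargas and Lee \cite{LV}, adapted to the weighted (fractal) setting. First I would perform a broad/narrow decomposition of the frequency annulus $A_\lambda([-1,1])$ into $O(K)$ arcs $J_\nu$ of length $\sim 1/K$ for a large dyadic parameter $K$. Writing $g_\nu$ for the piece of $g$ with frequency in $A_\lambda(J_\nu)$, at each point $(x,t)$ one has the pointwise dichotomy: either the sum is dominated by a single ``narrow'' term $|T_\lambda^{\psi_0} g_\nu|$ with a fixed $\nu$, or there exist three arcs $J_{\nu_1},J_{\nu_2},J_{\nu_3}$ that are pairwise $\gtrsim 1/K$-separated with $|T_\lambda^{\psi_0} g| \lesssim K^{O(1)} \prod_{j=1}^3 |T_\lambda^{\psi_0} g_{\nu_j}|^{1/3}$. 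The broad part is handled by the multilinear (trilinear) restriction estimate of Bennett–Carbery–Tao \cite{BCT}: angular separation of the frequency arcs forces transversality of the corresponding cone pieces, so the trilinear $L^{q}$ estimate holds with an $\varepsilon$-loss and — crucially — one only needs the flat $L^2$ bound $\|T_\lambda^{\psi_0} g_\nu\|_{L^2}\lesssim \lambda^{1/2}\|g_\nu\|_2$ fed through it, which after interpolating with the trivial $L^\infty$ bound and inserting the fractal weight via \eqref{omega} yields an exponent $\gamma$ strictly below $1/2$ in the broad case, once $(1/p,1/q)\in\mathcal P_2(\alpha)$ and $p>(6-2\alpha)/\alpha$.

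\textbf{The rescaling step.} The narrow part is where the measure class $\Omega(\alpha)$ and the ``modified operator'' $T_\lambda^\phi$ earn their keep. On an arc $J_\nu$ of length $1/K$ the phase $\eta\psi_0(\rho/\eta)$, restricted to $\theta=\rho/\eta\in J_\nu$, is an affine image of a phase of the same type $\eta\phi(\rho/\eta)$ at frequency scale $\lambda/K^2$: parabolic rescaling $(\eta,\rho)\mapsto$ (translate in $\theta$, dilate) turns $T_\lambda^{\psi_0}g_\nu$ into $T_{\lambda/K^2}^{\phi_\nu}\tilde g_\nu$ for a new phase $\phi_\nu$ which is again a small perturbation of $\theta^2/2$ (this is exactly why one works with the class of such $\phi$ rather than with $\psi_0$ alone). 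Under this affine change of variables in $(x,t)$, the weight $\omega$ is transported to a new nonnegative function; the key point — and the reason the proposition is stated for the whole class $\Omega(\alpha)$ rather than for weights supported in a fixed ball — is that the rescaled weight still satisfies the ball condition \eqref{omega} with the \emph{same} exponent $\alpha$, possibly after multiplying by a harmless power of $K$, even though its support has been stretched and is no longer contained in $\mathbb B^3(0,1)$. Summing the rescaled estimates over the $\sim K$ arcs, using almost-orthogonality in $L^2$ together with the $\ell^q$-decoupling or simple $\ell^p$--$\ell^q$ bookkeeping for the $g_\nu$, produces a recursive inequality of the form
\[
\mathcal Q(\lambda) \le C_\varepsilon K^{C\varepsilon}\,\lambda^{\gamma_{\mathrm{broad}}} + C\, K^{\beta}\, \mathcal Q(\lambda/K^2),
\]
where $\mathcal Q(\lambda)$ is the best constant in \eqref{TR-est} at scale $\lambda$ and $\beta$ is an explicit exponent. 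Iterating this $O(\log_K \lambda)$ times and optimizing in $K$ closes the induction and yields \eqref{TR-est} with some $\gamma<1/2$, provided $\beta$ is strictly dominated by the gain in the broad term — this inequality among exponents is precisely what the hypotheses \eqref{pq} encode.

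\textbf{Ingredients imported as black boxes.} Two external inputs feed the narrow/broad split. In the broad case I would invoke, instead of a bare multilinear estimate, the sharp $L^p$ local smoothing estimate of Guth–Wang–Zhang \cite{GWZ} in the plane to upgrade the single-scale trilinear bound to one with no $\lambda^\varepsilon$ loss in the relevant regime; combined with \eqref{omega} and Hölder this is what makes $\gamma_{\mathrm{broad}}<1/2$ rather than merely $\le 1/2$. The trilinear restriction estimate for the cone (Bennett–Carbery–Tao \cite{BCT}, in the form used in \cite{BG,LV}) supplies the transversality-based $L^q$ bound for the product of the three angularly separated pieces. Everything else — the broad/narrow pointwise dichotomy, the parabolic rescaling identities for $T_\lambda^\phi$, and the stability of the class of perturbed phases under rescaling — is elementary and follows \cite{LV} verbatim with the Lebesgue measure replaced by a weight in $\Omega(\alpha)$.

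\textbf{Main obstacle.} The principal difficulty, and the reason the argument is packaged through $\Omega(\alpha)$ and $T_\lambda^\phi$ rather than directly through \eqref{TRg}, is the \emph{non-compactness of the rescaled measure}: parabolic rescaling of a fractal weight supported in the unit cube yields a weight supported in a box of dimensions $\sim 1\times 1\times K^2$ (in the appropriate coordinates), so one cannot run the induction within a fixed bounded region. One must verify that the ball-growth condition \eqref{omega} is genuinely preserved — with the exponent $\alpha$ unchanged and only a tame $K$-power loss in the constant — under the affine map induced by the rescaling, for balls of \emph{all} radii, including radii larger than the support of the original weight. Checking this, and tracking the resulting $K$-powers carefully enough that they are absorbed by the gain from the broad term under the hypotheses \eqref{pq}, is the technical heart of the proof; the rest is bookkeeping of exponents.
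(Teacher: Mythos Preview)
Your proposal is essentially the paper's argument: Lee--Vargas induction on scales, with the broad part controlled by a weighted trilinear estimate (built from BCT and the GWZ local smoothing used as \emph{parallel} interpolation endpoints in Proposition~\ref{localtri}, then passed to the weight via Lemma~\ref{elementary}) and the narrow part by parabolic rescaling preserving $\Omega(\alpha)$ (Lemma~\ref{Lscale}). One caveat: the single-scale dichotomy you state---``one arc dominates or three separated arcs exist''---fails as written (two separated significant arcs is neither case), and the paper resolves this via the nested two-scale decomposition $K_2\ll K_1\ll K_0=1$ of \eqref{divideU} from \cite{LV}; it also closes the induction not by iterating in $K$ but by introducing $\mathfrak Q(\lambda')=\sup_{1\le\lambda\le\lambda'}\lambda^{-\gamma}Q(\lambda)$ and choosing $K_1,K_2$ once so that $\mathfrak Q(\lambda')\le\tfrac23\,\mathfrak Q(\lambda')+C$.
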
 

We deduce the estimate \eqref{TRg} from \eqref{TR-est} via a change of variable (see the proof of Theorem \ref{smoothing} in Section \ref{Sec:3}).

\subsubsection*{Normalization of the phase function.} For our induction  argument, we begin with normalizing the phase function of $T_\lambda g$. 
Let \[ \phi_\circ(\theta)=\frac 12\theta^2.\]
For a given $\epsilon>0$, we define a class $ \mathfrak S(\epsilon)$  of $\mathrm C^3$ functions by 
\[
\mathfrak S(\epsilon) = \{ \phi \in \mathrm C^3([-1,1]) : \phi(0)=\phi'(0)=0,~ \|    \phi  - \phi_\circ  \|_{\mathrm C^3([-1,1])} \le \epsilon   \} ,
\]
where $\|    \phi   \|_{\mathrm C^n([-1,1])}   = \sum_{j\le n}\sup_{\theta\in[-1,1]} \big| \partial_\theta^j   \phi( \theta)    \big| $.
For $\theta_\circ\in (-1,1) $, let   $h$ be a real number such that $[\theta_\circ-h,\theta_\circ+h] \subset [-1,1]$. 
For $\phi\in \mathrm C^3([-1,1]) $ satisfying $\phi''(\theta_\circ) \neq 0$, we define  
\[
\phi_{\theta_\circ,h}(\theta) =(\phi''(\theta_\circ)h^2) ^{-1}\big( \phi(h\theta+\theta_\circ)-\phi(\theta_\circ)-\phi'(\theta_\circ)h\theta \big) .
\]

\begin{lem}\label{para}
Suppose $\phi \in \mathfrak S(\epsilon_\circ)$ for  some  {$0<\epsilon_\circ  < 1/8$}. Then,  there exists $0<h_\circ=h_\circ(\epsilon_\circ)$ such that $\phi_{\theta_\circ,h} \in \mathfrak S(\epsilon_\circ)$ whenever  $0<h\le h_\circ$ and $[\theta_\circ-h,\theta_\circ+h]\subset [-1,1]$. 
\end{lem}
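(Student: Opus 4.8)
The plan is to check the two normalization conditions by a direct differentiation and then to estimate the $\mathrm C^3$-distance of $\phi_{\theta_\circ,h}$ to $\phi_\circ$ via Taylor expansion, exploiting that $\phi'''$ is uniformly small for $\phi\in\mathfrak S(\epsilon_\circ)$.

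First, since $\phi\in\mathfrak S(\epsilon_\circ)$ with $\epsilon_\circ<1/8$, we have $|\phi''(\theta)-1|\le\epsilon_\circ<1/8$ for every $\theta\in[-1,1]$, so $\phi''(\theta_\circ)\in[7/8,9/8]$ is bounded away from $0$; hence $\phi_{\theta_\circ,h}$ is well defined and of class $\mathrm C^3$ on $[-1,1]$ whenever $[\theta_\circ-h,\theta_\circ+h]\subset[-1,1]$. Differentiating the defining formula, one reads off $\phi_{\theta_\circ,h}(0)=0$ and $\phi_{\theta_\circ,h}'(0)=0$ at once, so the normalization requirements of $\mathfrak S$ hold for every such $h$.

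Next I would record the scaling identities
\[
\phi_{\theta_\circ,h}''(\theta)=\frac{\phi''(h\theta+\theta_\circ)}{\phi''(\theta_\circ)},\qquad
\phi_{\theta_\circ,h}'''(\theta)=\frac{h\,\phi'''(h\theta+\theta_\circ)}{\phi''(\theta_\circ)},
\]
and, by Taylor's theorem with Lagrange remainder about $\theta_\circ$ (the remainder being evaluated at a point of $[-1,1]$, since $|h\theta+\theta_\circ|\le1$),
\[
\phi_{\theta_\circ,h}(\theta)=\tfrac12\theta^2+\frac{h\theta^3}{6}\cdot\frac{\phi'''(\xi_1)}{\phi''(\theta_\circ)},\qquad
\phi_{\theta_\circ,h}'(\theta)=\theta+\frac{h\theta^2}{2}\cdot\frac{\phi'''(\xi_2)}{\phi''(\theta_\circ)}
\]
for some $\xi_1,\xi_2\in[-1,1]$. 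From $\|\phi-\phi_\circ\|_{\mathrm C^3([-1,1])}\le\epsilon_\circ$ we get $\sup|\phi'''|\le\epsilon_\circ$ and $\sup|\phi''-1|\le\epsilon_\circ$, whence each of $|\phi_{\theta_\circ,h}-\phi_\circ|$, $|\phi_{\theta_\circ,h}'-\phi_\circ'|$, $|\phi_{\theta_\circ,h}''-1|$ and $|\phi_{\theta_\circ,h}'''|$ is bounded on $[-1,1]$ by an absolute constant times $\epsilon_\circ h$ (for the second-derivative term one also uses the mean value theorem to bound $|\phi''(h\theta+\theta_\circ)-\phi''(\theta_\circ)|\le\epsilon_\circ h$, and everywhere $|\phi''(\theta_\circ)|\ge 7/8$). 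Summing the four contributions gives $\|\phi_{\theta_\circ,h}-\phi_\circ\|_{\mathrm C^3([-1,1])}\le C\epsilon_\circ h$ for an absolute constant $C$ (one may take $C=64/21$), and choosing $h_\circ=1/C$ yields $\phi_{\theta_\circ,h}\in\mathfrak S(\epsilon_\circ)$ for all $0<h\le h_\circ$.

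This lemma is elementary and I do not anticipate a genuine obstacle; the only points needing a little care are verifying that $\phi''(\theta_\circ)\ne0$ so that $\phi_{\theta_\circ,h}$ is meaningful, keeping the Taylor remainders anchored at points of $[-1,1]$ so that the $\mathrm C^3$ bounds on $\phi$ may be applied, and noting that the resulting $h_\circ$ depends only on $\epsilon_\circ$ (through the absolute constant $C$), uniformly in $\theta_\circ$, as the statement demands.
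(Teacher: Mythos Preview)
Your proof is correct and follows essentially the same Taylor-expansion approach as the paper. The paper packages the error as a single remainder $\mathcal E$ with $\|\mathcal E\|_{\mathrm C^3}\le Ch^3$ and then divides by $\phi''(\theta_\circ)h^2$, while you compute each derivative level separately; your version tracks the $\epsilon_\circ$ factor explicitly, which gives the slightly sharper bound $\|\phi_{\theta_\circ,h}-\phi_\circ\|_{\mathrm C^3}\le C\epsilon_\circ h$ and hence an $h_\circ$ that is in fact an absolute constant, but this is a cosmetic refinement rather than a different argument.
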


Throughout the paper, $\epsilon_\circ$ denotes a positive constant such that  $0<\epsilon_\circ  < 1/8$.

\begin{proof}
By Taylor series expansion, we have
\begin{align*}
\phi( h\theta+\theta_\circ) -  \phi(\theta_\circ) -  \phi'(\theta_\circ)h\theta   = \frac{\phi''(\theta_\circ) h^2\theta^2}{2}  + \mathcal E(\theta_\circ, h,\theta)  ,
\end{align*}
where  $ \|   \mathcal E(\theta_\circ, h,\cdot) \|_{\mathrm C^3([-1,1])} \le C  h^3$  with a constant $C$ independent of $\theta_\circ\in [-1,1]$. 
Since  $\phi \in \mathfrak S(\epsilon_\circ)$,  $\phi''(\theta_\circ) \in [3/4, 5/4]$.  
So, we see
\[
\|  \phi_{\theta_\circ,h}  - \phi_\circ \|_{\mathrm C^3([-1,1])} \le (\phi''(\theta_\circ)h^2)^{-1}  \|\mathcal E(\theta_\circ, h,\cdot) \|_{\mathrm C^3([-1,1])} \le Ch . 
\]
Thus,  there exists $h_\circ \in (0, \epsilon_\circ]$ such that  $\phi_{\theta_\circ,h} \in \mathfrak S(\epsilon_\circ)$
for  $0<h\le h_\circ$.
\end{proof}

\subsubsection*{Induction quantity.} As mentioned before, our proof is based on an induction on scale argument. 
We introduce a quantity which makes it possible to carry out the induction argument.

For $1 \le p\le q \le \infty$, we define $Q(\lambda) = Q(\lambda, \epsilon_\circ, p,q,\alpha)$ by
\begin{equation}\label{QR}
\begin{split}
Q(\lambda) 
=\sup \Big\{ \| T_\lambda^\phi g  \|_{L^q(\mathbb R^2\times I;\omega)}  :  \  &  \supp {\widehat g} \subset A_\lambda([-1,1]), ~\| g \|_{p}\le 1 ,  \\
\ &   \phi \in \mathfrak S(\epsilon_\circ),~ \omega \in \Omega (\alpha) \Big\} .  
\end{split}
\end{equation}
It is easy to see $Q(\lambda)<\infty$. Indeed, 
one can show a rough bound 
\begin{equation}\label{q-lambda}
Q(\lambda)\lesssim \lambda^2,  \quad   \lambda \ge 1
\end{equation}
for $1 \le p\le q \le \infty$. 
We write $T_\lambda^\phi g(x,t) = K_\lambda(\cdot,t)\ast g$, where the kernel is given by 
\begin{align}\label{kernelA}
K_\lambda (x,t )=\int e^{i(x ,t) \cdot (\eta, \rho, \eta \phi(\rho/\eta) ) } \beta (\lambda^{-1}\eta) \beta_0(|\rho/\eta| ) d\eta d\rho.
\end{align}
In order to show \eqref{q-lambda}, we use an easy  estimate:
  \begin{equation}\label{kernelB}
|K_\lambda(x,t)| \lesssim \mathcal   K(x):= \lambda^2  \big(1+   |x|     \big)^{-M} , \quad  t \in I
\end{equation} 
for any $M\ge1$,
which follows by routine integration by parts (for example, see  \eqref{ker-wave} below).
Using  \eqref{omega}, it is easy to see 
\begin{align}\label{claim01}
\Big\|\int  \mathcal K(x-y)g(y)\,dy \Big\|_{L^q(\mathbb R^2\times I,\omega)} \lesssim \lambda^2 \|g\|_{p}  
\end{align} 
for $1\le p \le q$.
Indeed, since  $\int_I \int  \mathcal K(x-y)\,\omega(x,t)\,dxdt \lesssim \lambda^2$
and $\int  \mathcal K(x-y) dy \lesssim \lambda^2$,
 \eqref{claim01} follows for $p=q$ by Schur's test.
Also, by H\"older's inequality, we get \eqref{claim01} for $q=\infty$ and $1 \le p \le \infty$. 
Interpolating those estimates, we have \eqref{claim01}  and therefore \eqref{q-lambda} for $1\le p \le q$.

\medskip

In what follows, we show $Q(\lambda) \le 
 C_\epsilon  \lambda^{\frac12 - \epsilon}$ for some  $\epsilon>0$.

\subsubsection*{Rescaling}
We first observe that $L^p$--$L^q$  bound on  $T_\lambda^\phi$ is improved on a certain range of $p,q$ if $\widehat g$ is supported in a narrow conic neighborhood.   
More precisely,  we have the following.

\begin{lem} \label{Lscale} 
Let $0<\alpha \le 3$, $\omega \in \Omega(\alpha)$, and $\phi \in \mathfrak S(\epsilon_\circ)$.
For $h \ge \lambda^{-\frac 12}$, let  $J_h=[\theta_\circ- h , \theta_\circ+ h ] \subset [-1,1]$.
Suppose $\supp \widehat{g} \subset A_\lambda(J_h)$.  
Then,   
there is  an $h_\circ=h_\circ (\epsilon_\circ)$ such that      
\begin{equation}\label{scale}
\| T_\lambda^\phi g\|_{L^q(\mathbb R^2 \times I; \omega)}
\le C    h^{\frac{2\alpha - \kappa(\alpha) }q -\frac 3p }
Q(\phi''(\theta_\circ)h^2 \lambda) \|g\|_{p}   
\end{equation}
whenever $h\le h_\circ$. 
\end{lem}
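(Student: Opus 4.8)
The plan is to perform the standard parabolic rescaling adapted to the cone, applied to the piece $T_\lambda^\phi g$ whose frequency support lies in the slab $A_\lambda(J_h)$ around the direction $\theta_\circ$. First I would write the inner integral in \eqref{TR} over $A_\lambda(J_h)$ and substitute $\eta = \lambda\eta'$ (or rather keep $\eta$ comparable to $\lambda$) and $\theta = \rho/\eta = \theta_\circ + h\theta'$, so that the new angular variable $\theta'$ runs over $[-1,1]$. Under this change of variables the phase $\eta\phi(\rho/\eta) = \eta\phi(\theta_\circ + h\theta')$ gets reorganized, using $\phi(\theta_\circ+h\theta') = \phi(\theta_\circ) + \phi'(\theta_\circ)h\theta' + \phi''(\theta_\circ)h^2\,\phi_{\theta_\circ,h}(\theta')$, into an affine part in $(x,t)$ plus the term $\eta\,\phi''(\theta_\circ)h^2\,\phi_{\theta_\circ,h}(\theta')$. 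The affine part is absorbed by translating/shearing the physical variables $(x,t)$ (a measure-preserving unimodular change), and the remaining operator is, after relabelling, exactly of the form $T_\mu^{\phi_{\theta_\circ,h}}$ with new frequency scale $\mu = \phi''(\theta_\circ)h^2\lambda$. By Lemma \ref{para}, $\phi_{\theta_\circ,h}\in\mathfrak S(\epsilon_\circ)$ whenever $h\le h_\circ$, so this rescaled operator is admissible in the supremum defining $Q(\mu)$. The hypothesis $h\ge\lambda^{-1/2}$ guarantees $\mu\gtrsim 1$, so $Q(\mu)$ is a meaningful (finite) quantity.

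Next I would track the Jacobian factors and the transformation of the norms. The change of variables in frequency space contributes a factor of a power of $h$ (and $\lambda$, which will be harmless or cancel) to $g$; transporting this to the $L^p$ norm of $g$ and the $L^q(\omega)$ norm of the output produces the factor $h^{2\alpha-\kappa(\alpha))/q - 3/p}$ claimed. The key point on the measure side: the physical change of variables $(x,t)\mapsto (Ax, \dots)$ with $A$ an anisotropic dilation of scale roughly $h^{-2}$ in one direction and $h^{-1}$ in the others transforms a weight $\omega\in\Omega(\alpha)$ into a new weight $\tilde\omega$ that, after normalization, again lies in $\Omega(\alpha)$ — but with an extra scalar factor. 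Here I would use exactly the scaling behavior encoded in the ball condition \eqref{omega}: on the rescaled picture, balls of radius $r$ pull back to anisotropic boxes, and the worst case (a box that is a genuine ball at the fine scale $h^{-1}$, the scale of the cone's null direction after rescaling) is what produces the exponent $\kappa(\alpha)$ via the definition \eqref{ka}. So the decomposition of $\kappa(\alpha)$ into the three regimes of $\alpha$ is precisely dictated by which dimension of the anisotropic box governs the worst ball-growth estimate for the rescaled weight.

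Finally I would assemble: $\|T_\lambda^\phi g\|_{L^q(\omega)}$ equals (a power of $h$) times $\|T_\mu^{\phi_{\theta_\circ,h}}\tilde g\|_{L^q(\tilde\omega)}$, bound the latter by $Q(\mu)\|\tilde g\|_p$ using that $(\phi_{\theta_\circ,h},\tilde\omega)$ are admissible, and convert back to $\|g\|_p$, collecting all Jacobian powers into the single exponent $\frac{2\alpha-\kappa(\alpha)}{q}-\frac{3}{p}$. The main obstacle — and the step needing genuine care rather than bookkeeping — is verifying that the rescaled weight, after dividing by the appropriate normalizing constant, really satisfies \eqref{omega} with the \emph{same} $\alpha$, and extracting the sharp power of $h$ from this; this is where the definition of $\kappa(\alpha)$ must be used in exactly the right way, since a naive isotropic estimate would give the wrong (too large) exponent. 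Everything else — the affine absorption of the lower-order phase terms, the invocation of Lemma \ref{para}, the $h\ge\lambda^{-1/2}$ condition ensuring $\mu\gtrsim 1$ — is routine once the change of variables is set up cleanly.
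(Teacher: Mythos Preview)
Your plan is correct and matches the paper's proof exactly: parabolic rescaling via an explicit affine map $D_{\theta_\circ,h}$ on $(x,t)$ and its dual $L_{\theta_\circ,h}$ on $(\eta,\rho)$, Lemma~\ref{para} to place $\phi_{\theta_\circ,h}$ back in $\mathfrak S(\epsilon_\circ)$, and the key weight check done by covering the pulled-back ball $D_{\theta_\circ,h}^{-1}\mathbb B^3(y_\circ,r)$ (an anisotropic box of dimensions $\sim h^2r\times hr\times r$) by balls at each of the three scales, the minimum of the resulting bounds yielding $r^\alpha h^{2\alpha-\kappa(\alpha)}$. One small correction to your description: the $t$-direction is \emph{unscaled} (so the three physical dilation factors are $h^{-2},h^{-1},1$, not $h^{-2}$ in one direction and $h^{-1}$ in the other two), and it is precisely these three distinct scales that produce the three regimes in the definition of $\kappa(\alpha)$.
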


We note $\kappa(\alpha) = \min \{ 3, \alpha+1, 2\alpha \}$ (see \eqref{ka}).

\begin{proof}
Since $\widehat g(\eta,\rho)$ is supported in $A_\lambda(J_h)$, we have $\lambda/4 \le \eta \le 4\lambda$ and $\theta  = \rho/\eta \in J_h$ i.e., $|\theta - \theta_\circ|\le h$.
We set \[  
D_{\theta_\circ,h} = \frac1{\phi''(\theta_\circ)}
\begin{bmatrix} 
h^{-2} & h^{-2}\theta_\circ & h^{-2}\phi(\theta_\circ) \\ 
0 & h^{-1} & h^{-1}\phi'(\theta_\circ) \\
0& 0& \phi''(\theta_\circ)
\end{bmatrix} ,     
\]
and
\[ 
L_{\theta_\circ,h}(\eta,\rho) = \frac1{\phi''(\theta_\circ)}(h^{-2} \eta, h^{-1} \rho+h^{-2}\theta_\circ \eta).
\]

Let $z = (x,t)$. Note that 
$z\cdot (\eta, \rho, \eta \phi(\rho/\eta) )\mapsto D_{\theta_\circ,h}\,z\cdot  (\eta, \rho, \eta \phi_{\theta_\circ,h}(\rho/\eta) )$ under the transformation  $(\eta, \rho) \mapsto L_{\theta_\circ,h}(\eta,\rho) $. 
Changing variables $(\eta, \rho) \mapsto L_{\theta_\circ,h}(\eta,\rho)$, we obtain
\[ 
T_\lambda^\phi g(z )
= T_{\lambda_1}^{\phi_{\theta_\circ,h}} g_{\theta_\circ,h} (D_{\theta_\circ,h}z), \qquad \lambda_1=\phi''(\theta_\circ)h^2\lambda ,
\] 
where
\[ \widehat{g_{\theta_\circ,h}}(\eta,\rho)  =  {\det (L_{\theta_\circ,h})} \widehat g(L_{\theta_\circ,h}(\eta,\rho)).
\] 
Clearly, $\widehat{g_{\theta_\circ,h}}$ is supported in $A_{\lambda_1}
([-1,1])$.   By Lemma \ref{para}
 there exists $h_\circ=h_\circ(\epsilon_\circ)$ such that 
$\phi_{\theta_\circ,h} \in \mathfrak S(\epsilon_\circ)$ for $h \le h_\circ$.

Let us define \[  \omega_{\theta_\circ,h} (z) =  (1+ \widetilde C)^{-1}
h^{ \kappa(\alpha)-2\alpha+3}  \omega(D_{\theta_\circ,h}^{-1} z),  
\] 
where   $\widetilde C>0$  is a constant  to be chosen later. We claim $\omega_{\theta_\circ,h}  \in \Omega (\alpha)$ if $\widetilde C>0$ is large enough.   
To show this, we first observe
 \[ \int_{\mathbb B^3(y_\circ, r)} \omega(D_{\theta_\circ,h}^{-1}z) dz=  h^{-3}  
 \int_{D_{\theta_\circ,h}^{-1}\mathbb B^3(y_\circ, r)} \omega(z) dz.\]
So,  it is sufficient to show 
\begin{align}
\label{omega2}
  \int_{D_{\theta_\circ,h}^{-1}\mathbb B^3(y_\circ, r)} \omega(z) dz
\le   C r^\alpha h^{2\alpha - \kappa(\alpha)}.
\end{align}
Note that $D_{\theta_\circ,h}^{-1} \mathbb B^3(y_\circ, r)$ is contained in a  box 
of dimensions
$c  h^2 r  \times c hr   \times c  r$ for a constant $c>0$.  We can cover  $D_{\theta_\circ,h}^{-1} \mathbb B^3(y_\circ, r)$ 
with  as many as $Ch^{-3}$, $Ch^{-1}$, and $C$  balls  of radii $h^2 r$, $hr$, and $r$, respectively. By \eqref{omega}
the left hand side of \eqref{omega2} is bounded above by $C \min\{ r^\alpha h^{2\alpha-3}, r^\alpha h^{\alpha-1}, r^\alpha\}=C  r^\alpha h^{2\alpha -\min \{ 3, \alpha+1, 2\alpha \}}$ for some $C$. 

Changing variables $z \mapsto D_{\theta_\circ,h}^{-1} z$,  
we have
\begin{align}\label{T-rescale}
 \|T_\lambda^\phi g \|_{L^q(\mathbb R^2\times I;\omega)}^q  
&  =   (1+\widetilde C)  h^{2\alpha - \kappa(\alpha)  } \int_{\mathbb R^2\times I }  
\big|  T_{\lambda_1}
^{\phi_{\theta_\circ,h}}   g_{\theta_\circ,h} (  z ) \big|^q \omega_{\theta_\circ,h}(z) dz.  
\end{align}
Since  $\phi_{\theta_\circ,h} \in \mathfrak S(\epsilon_\circ)$, $\omega_{\theta_\circ,h}  \in \Omega (\alpha)$,  and $\widehat{g_{\theta_\circ,h}}$ is supported in $A_{\lambda_1}
([-1,1])$, by the definition of $Q$ we have 
\[
\int_{\mathbb R^2\times I }  
\big|  T_{\lambda_1}
^{\phi_{\theta_\circ,h}}   g_{\theta_\circ,h} (  z ) \big|^q \omega_{\theta_\circ,h}(z) dz\le \big(  Q(\phi''(\theta_\circ)h^2 \lambda) \|g_{\theta_\circ,h}\|_{p} \big)^q.
\] 
Therefore, we get \eqref{scale} since $\|g_{\theta_\circ,h}\|_{p} = h^{-\frac 3p} \|g\|_{p}$.
\end{proof}

\subsection{Trilinear estimate.}
In this section, we obtain  a weighted local $L^p$--$L^{q}$ estimate for the trilinear operator $\prod_{i=1}^3 T_\lambda^\phi g_i$ by interpolating the multilinear restriction estimate due to Bennett, Carbery and Tao \cite{BCT}, the local smoothing estimate in \cite{GWZ}, and an easy $L^1$--$L^\infty$ estimate. 
Subsequently, we extend the local estimate to a global one by using decay of the kernel $K_\lambda$.
For simplicity, we write \[T_\lambda = T_\lambda^\phi.\]

For  {$\phi \in \mathfrak S(\epsilon_\circ)$} 
and an interval $J \subset [-1,1]$, we consider a truncated conic surface  
\[
  \Gamma_\lambda (J) := \{ (\eta , \rho, \tau )  \in \mathbb R^3:  \tau = \eta\phi(\rho/\eta),~ (\eta,\rho) \in A_\lambda(J) \} .
\]
Let $\mathrm N(\theta)$ be the normal vector to $\Gamma_\lambda(J)$ at $\eta(1 , \theta,\phi(\theta ))$. A computation gives
\Be 
\label{normal} \mathrm  N(\theta)   = (\phi(\theta)-\theta \phi'(\theta), \phi'(\theta), -1).
\Ee
The following is a consequence of  the multilinear restriction estimate due to Bennett-Carbery-Tao \cite{BCT}. 
We denote by $d\sigma_\lambda$  the surface measure on  $\Gamma_\lambda([-1,1])$.
\begin{thm}
\label{BCT}
Let $\phi \in \mathfrak S(\epsilon_\circ)$ and  $J_1, J_2, J_3\subset [-1,1]$ be intervals. 
Suppose  
\begin{equation} \label{transv}
\det (\mathrm  N(\theta_1),\mathrm  N(\theta_2),\mathrm N(\theta_3)) \ge \delta
\end{equation}
holds for some $\delta >0$  whenever $\theta_i \in J_i$, $i=1,2,3$. 
If $\epsilon_\circ$ is sufficiently small and $\lambda \gg \delta^{-1}$, then for $\epsilon>0$
\[ 
\Big\| \prod_{i=1}^3  \widehat{f_i d\sigma_\lambda}  \Big\|_{L^1(\mathbb B^3(0,10))}
\le C_\epsilon \lambda^{\epsilon }
\prod_{i=1}^3  \|f_i\|_{L^2(\Gamma_\lambda(J_i))}
\] 
whenever $\supp f_i \subset \Gamma_\lambda (J_i)$, $i=1,2,3$.
\end{thm}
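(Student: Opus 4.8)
The goal is to reduce Theorem~\ref{BCT} to the multilinear restriction estimate of Bennett, Carbery and Tao \cite{BCT}. For three hypersurfaces in $\mathbb R^{3}$ of diameter $O(1)$ with controlled $\mathrm C^{2}$ data whose unit normals are transversal with an $O(1)$ lower bound, that estimate states
\[
\Big\|\prod_{i=1}^{3}\widehat{g_{i}\,d\sigma}\Big\|_{L^{1}(\mathbb B^{3}(0,R))}\le C_{\epsilon}R^{\epsilon}\prod_{i=1}^{3}\|g_{i}\|_{L^{2}(d\sigma)},
\]
the exponent $1=2/(3-1)$ being precisely the one in our statement. Two features must be brought into this form: $\Gamma_{\lambda}(J_{i})$ has diameter $\sim\lambda$, not $O(1)$, and by \eqref{transv} its normals $\mathrm N(\theta)$ from \eqref{normal} are transversal only at the level $\delta$. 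Both are handled by rescaling, and this is where the hypotheses that $\epsilon_\circ$ be small and $\lambda\gg\delta^{-1}$ enter.

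First I would pass to unit frequency scale. Since the graphing function $(\eta,\rho)\mapsto\eta\phi(\rho/\eta)$ of $\Gamma_{\lambda}(J)$ is homogeneous of degree one, the dilation $\xi\mapsto\lambda^{-1}\xi$ carries $\Gamma_{\lambda}(J_{i})$ onto the fixed surface $\Gamma_{1}(J_{i})\subset\Gamma_{1}([-1,1])$, of diameter $O(1)$, while turning $\mathbb B^{3}(0,10)$ into $\mathbb B^{3}(0,10\lambda)$; because $\phi\in\mathfrak S(\epsilon_\circ)$ keeps $\phi''\in[3/4,5/4]$ and $\|\phi\|_{\mathrm C^{3}([-1,1])}$ bounded while $\eta$ stays comparable to $1$ on $A_{1}([-1,1])$, the surface $\Gamma_{1}(J_{i})$ has bounded geometry. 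Writing $g_{i}$ for the pullback of $f_{i}$, one has $\widehat{f_{i}\,d\sigma_{\lambda}}(z)=\lambda^{2}\,\widehat{g_{i}\,d\sigma_{1}}(\lambda z)$, so a change of variables in $z$ gives $\|\prod_{i}\widehat{f_{i}\,d\sigma_{\lambda}}\|_{L^{1}(\mathbb B^{3}(0,10))}=\lambda^{3}\|\prod_{i}\widehat{g_{i}\,d\sigma_{1}}\|_{L^{1}(\mathbb B^{3}(0,10\lambda))}$, while $\prod_{i}\|f_{i}\|_{L^{2}(\Gamma_{\lambda}(J_{i}))}=\lambda^{3}\prod_{i}\|g_{i}\|_{L^{2}(\Gamma_{1}(J_{i}))}$; the factors $\lambda^{3}$ cancel, reducing the claim to $\|\prod_{i}\widehat{g_{i}\,d\sigma_{1}}\|_{L^{1}(\mathbb B^{3}(0,10\lambda))}\le C_{\epsilon}\lambda^{\epsilon}\prod_{i}\|g_{i}\|_{L^{2}(\Gamma_{1}(J_{i}))}$.

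Next I would normalize the transversality, as the constant in \cite{BCT} is uniform only for $O(1)$--transversal configurations. Since the determinant in \eqref{transv} is comparable to $\prod_{i<j}|\theta_{i}-\theta_{j}|$, the bound $\ge\delta$ forces the three angles to be separated; a rescaling of the type of the matrices $D_{\theta_\circ,h}$ from Lemma~\ref{Lscale} — with parameters governed by the gaps among the $\theta_{i}$, possibly composed and with a case distinction according to the relative sizes of these gaps — spreads the three caps to an $O(1)$--transversal position. By the computation behind Lemma~\ref{para} the phase stays in $\mathfrak S(\epsilon_\circ)$, so the rescaled surfaces remain in a fixed bounded-geometry class; since the inequality is invariant up to bounded factors under such changes of variables, the two sides are essentially preserved while $\mathbb B^{3}(0,10\lambda)$ is replaced by a region whose inradius is a fixed positive power of $\delta\lambda$. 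The hypothesis $\lambda\gg\delta^{-1}$ keeps that inradius $\gg1$, so \cite{BCT} applies there with a constant depending only on $\epsilon$; the attendant loss is at most a power of $\delta^{-1}\lambda\le\lambda^{2}$, hence $\lesssim\lambda^{O(\epsilon)}$, which yields the claim after renaming $\epsilon$. The smallness of $\epsilon_\circ$ serves only to keep all these rescalings, and their Jacobians, under uniform control.

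I expect the transversality normalization to be the main obstacle: without rescaling, the constant in \cite{BCT} degrades badly as $\delta\to0$, so one must genuinely use $\lambda\gg\delta^{-1}$ by choosing the rescaling correctly (including the case where two of the $\theta_{i}$ are much closer to each other than to the third, where a bilinear rather than trilinear input is natural) and then checking that it alters the $L^{1}$-- and $L^{2}$--norms only by bounded factors and preserves the bounded geometry of the three surfaces. The passage to unit frequency scale, by contrast, is routine scaling bookkeeping, and once transversality has been normalized, \cite{BCT} is invoked as a black box.
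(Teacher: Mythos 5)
The paper itself gives no proof of Theorem \ref{BCT}: it is stated as a direct consequence of the Bennett--Carbery--Tao multilinear restriction theorem, so the relevant comparison is with that intended derivation. Your first reduction is correct and is the standard one: since the phase is homogeneous of degree one, the dilation $\xi\mapsto\lambda^{-1}\xi$ carries $\Gamma_\lambda(J_i)$ onto $\Gamma_1(J_i)$ without changing the normals, and your bookkeeping (a factor $\lambda^3$ on each side, which cancels) is right, so the claim becomes a trilinear extension estimate on $\mathbb B^3(0,10\lambda)$ for unit-scale surfaces with the same $\delta$-transversality.

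The gap is in the transversality normalization. The rescalings of the type $D_{\theta_\circ,h}$, $L_{\theta_\circ,h}$ from Lemma \ref{Lscale} act on the angular variable affinely, $\theta\mapsto\theta_\circ+h\theta$, hence they multiply all three pairwise gaps $|\theta_i-\theta_j|$ by the same factor, and compositions of such maps are again affine in $\theta$. But \eqref{transv} only forces $\prod_{k<\ell}|\theta_k-\theta_\ell|\gtrsim\delta$ (cf.\ Lemma \ref{lem:transv}), which is compatible with very unbalanced configurations, e.g.\ $J_1,J_2$ within distance $\sim\delta$ of each other and $J_3$ at distance $\sim1$ from both. For such a triple no single or composed rescaling of this kind makes all three gaps $O(1)$ while keeping the surfaces in the bounded class $\mathfrak S(\epsilon_\circ)$ at unit scale, so the claimed reduction to an $O(1)$-transversal application of \cite{BCT} fails exactly where it is needed; the ``bilinear input'' you mention as a substitute is not worked out, and a bilinear-times-$L^\infty$ bound loses powers of $\lambda$. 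Relatedly, the accounting ``the attendant loss is at most a power of $\delta^{-1}\lambda\le\lambda^2$, hence $\lesssim\lambda^{O(\epsilon)}$'' is only legitimate if the sole loss is the $R^\epsilon$ factor of \cite{BCT} on the enlarged region; any leftover fixed power of $\delta^{-1}$ (from Jacobians, or from a transversality-dependent constant in \cite{BCT}) is a fixed power of $\lambda$, not $\lambda^{O(\epsilon)}$. The simplest repair, and what the paper's citation implicitly amounts to, is to allow the constant to depend on $\delta$: after your first dilation apply the multilinear theorem as a black box with transversality parameter $\delta$, with no second rescaling. This suffices for every use in the paper, since the trilinear estimate is only invoked with $\delta=K_2^3$ for a fixed constant $K_2$, and $\delta$-dependent factors are absorbed into the $CK_2^{-C}$ losses of the induction; if instead you insist on a constant uniform in $\delta$, your rescaling argument covers only the balanced case in which all pairwise separations are comparable, and the general case requires a genuinely different input.
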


By interpolation with the local smoothing estimate and an easy $L^1-L^\infty$ estimate, we get the following.

\begin{prop}\label{localtri}
Let $\phi \in \mathfrak S(\epsilon_\circ)$, and  let $J_1, J_2, J_3\subset [-1,1]$ be intervals. 
Also let $\frac 1q \le \min(\frac 1p,~ \frac{1}{3p}+\frac 16,~ \frac 2{3p'})$ for $ 1 \le p \le \infty$.
Suppose \eqref{transv} holds whenever $\theta_i \in J_i$, $i=1,2,3$.  
If $\epsilon_\circ$ is sufficiently small and $\lambda  \gg \delta^{-1} $, 
then we have 
\begin{equation}\label{triL}
\Big\| \prod_{i=1}^3  T_\lambda g_i  \Big\|_{L^{\frac q3}(\mathbb B^2(0,1)\times I)}
\le C  \lambda^{3\gamma_0}
\prod_{i=1}^3 \| g_i \|_{p}, 
\end{equation}
for $\gamma_0 >\frac12 +\frac1p-\frac3q$ whenever $\supp \widehat {g_i} \subset A_\lambda( J_i)$, $i=1,2,3$. 
\end{prop}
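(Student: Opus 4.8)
The goal is to interpolate three endpoint estimates for the trilinear operator $\prod_{i=1}^3 T_\lambda g_i$: the multilinear restriction bound of Theorem~\ref{BCT}, the sharp local smoothing estimate of Guth--Wang--Zhang \cite{GWZ}, and a trivial $L^1$--$L^\infty$ bound. The plan is first to recast Theorem~\ref{BCT} in physical space. Writing $T_\lambda g_i$ as an extension operator: if $\supp\widehat{g_i}\subset A_\lambda(J_i)$, then up to harmless smooth weights and the Jacobian of the graph map $(\eta,\rho)\mapsto(\eta,\rho,\eta\phi(\rho/\eta))$, one has $T_\lambda g_i=\widehat{f_i\,d\sigma_\lambda}$ with $\|f_i\|_{L^2(\Gamma_\lambda(J_i))}\approx \lambda^{-1/2}\|g_i\|_{L^2(\mathbb R^2)}$ (the $\lambda^{-1/2}$ coming from the width-$\lambda$ support in $\eta$ versus the surface measure normalization; I'd track this constant carefully). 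Under the transversality hypothesis \eqref{transv}, Theorem~\ref{BCT} then gives, for $\lambda\gg\delta^{-1}$,
\[
\Big\|\prod_{i=1}^3 T_\lambda g_i\Big\|_{L^{1/3}(\mathbb B^3(0,10))}\le C_\epsilon\,\lambda^{\epsilon}\,\lambda^{-3/2}\prod_{i=1}^3\|g_i\|_{L^2(\mathbb R^2)},
\]
which, by Hölder on the bounded set $\mathbb B^2(0,1)\times I$, yields the same bound with $L^{1/3}$ replaced by $L^{2/3}$; this is the endpoint $(\tfrac1p,\tfrac1q)=(\tfrac12,\tfrac12)$ with exponent $3\gamma_0=-3/2+\epsilon$, consistent with $\gamma_0>\tfrac12+\tfrac1p-\tfrac3q=-\tfrac12$.

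The second endpoint comes from the scalar sharp local smoothing estimate: by \cite{GWZ}, for $q\ge 6$ (so $(\tfrac1p,\tfrac1q)=(\tfrac1q,\tfrac1q)$ on the line $p=q$), $\|e^{it\sqrt{-\Delta}}P_\lambda f\|_{L^q(\mathbb B^2\times I)}\lesssim\lambda^{\gamma_0}\|f\|_{L^q}$ for $\gamma_0>\tfrac12-\tfrac2q=\tfrac12+\tfrac1p-\tfrac3q$. Since $T_\lambda$ is, after the change of variables normalizing the phase, a Fourier-integral operator of the same type as $e^{it\sqrt{-\Delta}}P_\lambda$ with symbol supported in an $O(1)$-angular sector, the same bound holds for each $T_\lambda g_i$; applying Hölder in the form $\|\prod_{i}T_\lambda g_i\|_{L^{q/3}}\le\prod_i\|T_\lambda g_i\|_{L^q}$ gives \eqref{triL} at $p=q$. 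The third, trivial, endpoint is $L^1$--$L^\infty$: from the kernel bound \eqref{kernelB}, $\|T_\lambda g_i\|_{L^\infty}\lesssim\lambda^2\|g_i\|_{L^1}$, so $\|\prod_i T_\lambda g_i\|_{L^\infty}\lesssim\lambda^6\prod_i\|g_i\|_{L^1}$, i.e. $(\tfrac1p,\tfrac1q)=(1,0)$ with $3\gamma_0=6=3(\tfrac12+\tfrac1p-\tfrac3q)+\tfrac32$, again within the claimed range. Multilinear real interpolation (applied componentwise in the $g_i$, treating $\prod T_\lambda g_i$ as a multilinear operator, e.g. via the multilinear Riesz--Thorin/Stein interpolation theorem) among these three points then covers the whole region $\tfrac1q\le\min(\tfrac1p,\ \tfrac1{3p}+\tfrac16,\ \tfrac2{3p'})$, which is exactly the triangle with vertices $(\tfrac12,\tfrac12)$, $(1/q_0,1/q_0)$ as $q_0\to\infty$ hence $(1,0)$... more precisely the three lines $\tfrac1q=\tfrac1p$, $\tfrac1q=\tfrac1{3p}+\tfrac16$, $\tfrac1q=\tfrac2{3p'}$ are the edges joining $(\tfrac12,\tfrac12)$, $(0,0)$ (wait: the $L^1$--$L^\infty$ point) — in any case the three endpoints I've identified span this region, and the exponent $\gamma_0$ at an interpolated point is the corresponding convex combination, which stays $>\tfrac12+\tfrac1p-\tfrac3q$ because that quantity is affine in $(\tfrac1p,\tfrac1q)$ and matches (up to the $\epsilon$ and the harmless $+\tfrac32$ slack at the $L^1$-$L^\infty$ corner) the value of $3\gamma_0$ at each endpoint.

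To pass from the local estimate on $\mathbb B^2(0,1)\times I$ to... actually the statement \emph{is} local, so no globalization is needed here; that step is deferred to the later extension. The main obstacle is bookkeeping: (i) verifying that after the phase normalization the operators $T_\lambda$ genuinely fall under the scope of \cite{GWZ} — this is where one uses that $\phi\in\mathfrak S(\epsilon_\circ)$ with $\epsilon_\circ$ small, so the surface $\Gamma_\lambda$ has everywhere non-vanishing curvature and is a small $C^3$-perturbation of the light cone, and the class of FIOs to which the GWZ theorem applies is stable under such perturbations; and (ii) checking that the three endpoint exponents, when interpolated, never violate the strict inequality $\gamma_0>\tfrac12+\tfrac1p-\tfrac3q$ — this requires that the "excess" regularity at the $L^2$-based endpoints (the $\lambda^\epsilon$) and at the $L^1$--$L^\infty$ corner is arranged so that the interpolated exponent is $\le$ the target plus an arbitrarily small $\epsilon$. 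Both are routine but must be done with care about which $\epsilon$'s and constants depend on $\delta$ versus are absolute; the hypothesis $\lambda\gg\delta^{-1}$ is used only in invoking Theorem~\ref{BCT}, while the GWZ and trivial endpoints hold for all $\lambda\ge1$.
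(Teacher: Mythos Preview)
Your strategy of interpolating BCT, sharp local smoothing, and an $L^1$--$L^\infty$ bound is exactly the paper's approach, but each of your three endpoints is off, and the errors are not harmless. First, Theorem~\ref{BCT} places the trilinear product in $L^1$, not $L^{1/3}$; the BCT endpoint is therefore $(\frac1p,\frac1q)=(\frac12,\frac13)$ (that is, $q/3=1$), which is one of the vertices of the region in the hypothesis. The surface Jacobian of $(\eta,\rho)\mapsto(\eta,\rho,\eta\phi(\rho/\eta))$ is bounded above and below independently of $\lambda$, so Plancherel gives $\|f_i\|_{L^2(\Gamma_\lambda)}\sim\|g_i\|_2$ with no $\lambda^{-1/2}$ factor. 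Hence BCT yields $\gamma_0>0$ at $(\frac12,\frac13)$, matching the threshold $\frac12+\frac12-1=0$; your claimed sharp bound $\gamma_0>-\frac12$ at $(\frac12,\frac12)$ is not available from this argument (and your H\"older step goes the wrong way: on a bounded set one passes from larger to smaller exponents, not conversely). Second, the GWZ local smoothing estimate in two dimensions holds for $p=q\ge 4$, not $q\ge 6$; the vertex $(\frac14,\frac14)$ is needed and lies strictly outside the convex hull of $(0,0)$, $(\frac16,\frac16)$, $(\frac12,\frac13)$, $(1,0)$.

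Third, and independently fatal, the crude bound $\|K_\lambda\|_\infty\lesssim\lambda^2$ from \eqref{kernelB} gives only $\gamma_0=2$ at $(1,0)$, whereas the threshold there is $\frac32$. This half-unit of slack is not ``harmless'': since the threshold $\frac12+\frac1p-\frac3q$ is affine in $(\frac1p,\frac1q)$, any excess at one vertex propagates under interpolation, and you would fail to obtain the estimate for $\gamma_0$ close to the threshold along the entire edge from $(\frac12,\frac13)$ to $(1,0)$. The fix is van der Corput in the $\rho$-integral, which gives $\|K_\lambda\|_\infty\lesssim\lambda^{3/2}$ and hence the sharp $\gamma_0\ge\frac32$ at $(1,0)$. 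With the four correct vertices $(0,0)$, $(\frac14,\frac14)$, $(\frac12,\frac13)$, $(1,0)$ and the sharp exponent at each---the first two from local smoothing for $4\le p=q\le\infty$, the third from BCT plus Plancherel, the fourth from van der Corput---multilinear interpolation delivers the stated conclusion.
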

\begin{proof}
By interpolation it is enough to show \eqref{triL} for $(\frac 1p, \frac 1q)=(0,0)$, $(\frac 14,\frac14)$, $(\frac12,\frac13)$, and $(1,0)$ if $\gamma_0>\frac 12+\frac 1p-\frac 3q$. 
For the first two cases, \eqref{triL} follows from the local smoothing estimate   \eqref{LS-AAA} with $d\nu=dxdt$ which holds  for $4\le p=q \le \infty$ and $\gamma_0 >\frac12 - \frac2p$.  
By Theorem \ref{BCT} and Plancherel's theorem,  we have  \eqref{triL} with   $(\frac 1p, \frac 1q)=(\frac12,\frac13)$ for $\gamma_0>0$. 
Also,  \eqref{kernelA} and van der Corput's lemma yield   $\|K_\lambda \|_{\infty} \le \lambda^{3/2}$ (e.g.  \eqref{ker-wave}), which gives 
$\|T_\lambda g_i \|_{L^{\infty}(\mathbb B^2(0,1)\times I)} \lesssim   \lambda^{3/2} \| g_i \|_{1}$.  Thus, we have    \eqref{triL} with   $(\frac 1p, \frac 1q) = (1,0)$ for $\gamma_0 \ge 3/2$. 
\end{proof}

From \eqref{kernelB}, we see that $ K_\lambda(\cdot, t) $ decays rapidly away from $\mathbb B^2(0,1)$. 
Using this, we can extend the local   estimate \eqref{triL} to a global one.  

\begin{lem}\label{globtri}
Under the same assumptions as in Proposition \ref{localtri}, we have 
\begin{equation}\label{pqest}
\Big\| \prod_{i=1}^3  T_\lambda g_i \Big\|_{L^{\frac q 3} (\mathbb R^2\times I)}
\le C \lambda^{3\gamma_0 } 
\prod_{i=1}^3 \| g_i \|_p   
\end{equation}
for $\gamma_0>\frac 12+\frac 1p-\frac 3 q$.
\end{lem}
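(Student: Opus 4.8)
The plan is to upgrade the local trilinear estimate \eqref{triL}, valid on the unit cube $\mathbb B^2(0,1)\times I$, to the global estimate \eqref{pqest} on $\mathbb R^2\times I$. The mechanism is the rapid off-diagonal decay of the kernel $K_\lambda$ recorded in \eqref{kernelB}: although $T_\lambda g_i$ is not compactly supported, its value at a point $x$ is controlled, up to negligible tails, by the part of $g_i$ living in a unit neighborhood of $x$. The natural device is a partition of unity adapted to a lattice: choose $\{\zeta_\nu\}_{\nu\in\mathbb Z^2}$ with $\zeta_\nu$ supported in $\mathbb B^2(\nu,1)$, $\sum_\nu \zeta_\nu \equiv 1$, and also a fattened cutoff $\widetilde\zeta_\nu$ equal to $1$ on $\mathbb B^2(\nu,2)$ and supported in $\mathbb B^2(\nu,3)$. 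Writing $g_i=\sum_{\mu}\widetilde\zeta_\mu g_i$, one splits $T_\lambda g_i=\sum_\mu T_\lambda(\widetilde\zeta_\mu g_i)$, and on the cube $\mathbb B^2(\nu,1)\times I$ the ``near'' term $\mu=\nu$ is handled by translating Proposition \ref{localtri}, while the ``far'' terms $|\mu-\nu|\gtrsim 1$ are estimated crudely using $|K_\lambda(x-y,t)|\lesssim \lambda^2(1+|x-y|)^{-M}$.

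First I would fix the lattice decomposition and localize: for each $\nu$,
\begin{align*}
\Big\|\prod_{i=1}^3 T_\lambda g_i\Big\|_{L^{q/3}(\mathbb B^2(\nu,1)\times I)}
&\le \Big\|\prod_{i=1}^3 T_\lambda(\widetilde\zeta_\nu g_i)\Big\|_{L^{q/3}(\mathbb B^2(\nu,1)\times I)}
+ (\text{error}),
\end{align*}
where the error collects all product terms in which at least one factor is $T_\lambda(\widetilde\zeta_\mu g_i)$ with $|\mu-\nu|\ge C$. By translation invariance of the setup, the main term is bounded by $C\lambda^{3\gamma_0}\prod_i\|\widetilde\zeta_\nu g_i\|_p$ via Proposition \ref{localtri} (note the frequency support condition $\supp\widehat{g_i}\subset A_\lambda(J_i)$ is unaffected by multiplying $g_i$ by a smooth compactly supported bump only in the sense that the essential frequency localization is retained after a harmless Schwartz-tail argument, or one simply keeps $T_\lambda$ acting before the spatial cutoff and uses that $T_\lambda$ has a kernel; I would phrase it via the kernel to avoid fussing over frequency supports). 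For the error term, each offending factor satisfies, for $x\in\mathbb B^2(\nu,1)$,
\[
|T_\lambda(\widetilde\zeta_\mu g_i)(x,t)|\lesssim \lambda^2\!\!\int_{\mathbb B^2(\mu,3)}\!\!(1+|x-y|)^{-M}|g_i(y)|\,dy
\lesssim \lambda^2(1+|\mu-\nu|)^{-M+3}\|\widetilde\zeta_\mu g_i\|_{p},
\]
so Hölder in the remaining two factors (each bounded on $\mathbb B^2(\nu,1)\times I$ by $\lambda^2$ times a similar sum, or by the main-term estimate) produces a factor $(1+|\mu-\nu|)^{-M+3}$ that is summable in $\mu$ for $M$ large.

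Then I would sum over $\nu$. Using that $q/3\ge 1$ exactly when $q\ge 3$ — which holds throughout the relevant range since the hypotheses $\tfrac1q\le\min(\tfrac1p,\tfrac1{3p}+\tfrac16,\tfrac2{3p'})$ force $q\ge 3$ (indeed $\tfrac1q\le\tfrac1{3p}+\tfrac16\le\tfrac13$) — I can apply the embedding $\ell^{q/3}\hookrightarrow\ell^{q/3}$ trivially and, more to the point, add up the $L^{q/3}$ norms over the disjoint cubes:
\[
\Big\|\prod_{i=1}^3 T_\lambda g_i\Big\|_{L^{q/3}(\mathbb R^2\times I)}^{q/3}
=\sum_\nu \Big\|\prod_{i=1}^3 T_\lambda g_i\Big\|_{L^{q/3}(\mathbb B^2(\nu,1)\times I)}^{q/3}
\lesssim \lambda^{q\gamma_0}\sum_\nu\Big(\sum_{\mu}\langle\mu-\nu\rangle^{-N}\prod_i\|\widetilde\zeta_\mu g_i\|_p\Big)^{q/3}.
\]
Since $q/3\ge 1$, Minkowski's inequality in $\ell^{q/3}$ moves the $\mu$-sum outside, and the geometric decay $\langle\mu-\nu\rangle^{-N}$ is summable in $\nu$; what remains is $\sum_\mu\prod_i\|\widetilde\zeta_\mu g_i\|_p^{q/3}$, which by Hölder in $i$ (with three exponents $q/p\cdot$ something — more precisely by $\sum_\mu a_\mu b_\mu c_\mu\le\prod(\sum a_\mu^{3})^{1/3}$ when the $\ell^{q/3}$-to-$\ell^{?}$ bookkeeping is set up, using finite overlap of the $\widetilde\zeta_\mu$) is controlled by $\prod_i\|g_i\|_p^{q/3}$. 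Taking $q/3$-th roots gives \eqref{pqest}.

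The main obstacle is the bookkeeping in the last summation: one must verify that after Minkowski in $\ell^{q/3}$ the product structure survives a final Hölder step, i.e. that $\sum_\mu\prod_{i=1}^3\|\widetilde\zeta_\mu g_i\|_{L^p(\mathbb B^2(\mu,3))}\lesssim\prod_{i=1}^3\|g_i\|_{L^p(\mathbb R^2)}$, which needs $p\ge 3$ — but this is exactly guaranteed on our range, since $\tfrac1p\ge\tfrac1q$ combined with $\tfrac1q\le\tfrac2{3p'}=\tfrac23(1-\tfrac1p)$ gives $\tfrac1p\le\tfrac13$, hence $p\ge3$ and $\ell^1\hookrightarrow\ell^{p/3}$-type summation, together with the bounded overlap of $\{\widetilde\zeta_\mu\}$, closes the estimate. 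Everything else — the kernel decay, the translation of Proposition \ref{localtri}, the choice of $M$ — is routine, so the only real point of care is tracking these exponent inequalities to confirm $q\ge 3$ and $p\ge 3$ so that all the $\ell^r$ manipulations are legitimate.
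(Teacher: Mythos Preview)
Your overall plan—localize to unit cubes, separate near and far contributions, apply Proposition~\ref{localtri} to the near piece, and use kernel decay for the rest—is the right shape, and it is essentially what the paper does. But there is a genuine gap in the error analysis: with unit-scale spatial cutoffs the ``far'' terms carry a bad power of $\lambda$ that you never get rid of. Concretely, for $x\in\mathbb B^2(\nu,1)$ and $|\mu-\nu|\gtrsim 1$ the kernel bound \eqref{kernelB} only gives
\[
|T_\lambda(\widetilde\zeta_\mu g_i)(x,t)|\ \lesssim\ \lambda^{2}\,(1+|\mu-\nu|)^{-M}\,\|\widetilde\zeta_\mu g_i\|_{p},
\]
and after taking the product and the $L^{q/3}$ norm you are left with a factor $\lambda^{6}$ times something summable in the lattice variables. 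Since $3\gamma_0$ can be as small as $0+$ on the admissible range (e.g.\ at $(1/p,1/q)=(1/2,1/3)$), the error dominates the main term and the argument collapses. The paper repairs exactly this point by taking the enlarged balls $\widetilde B$ of radius $\lambda^{\varepsilon}$ rather than $O(1)$: then ``far'' means $|x-y|\gtrsim\lambda^{\varepsilon}$, so the kernel decay yields an extra factor $\lambda^{\varepsilon(3-M)}$, which for $M$ large kills any power of $\lambda$. The $\lambda^{\varepsilon}$-overlap of the $\widetilde B$'s costs only $\lambda^{\tilde c\varepsilon}$, and this is absorbed into the strict inequality $\gamma_0>\tfrac12+\tfrac1p-\tfrac3q$.

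Two secondary remarks. First, your claim that the hypotheses force $p\ge 3$ is false (at the vertex $(1/p,1/q)=(1/2,1/3)$ one has $p=2$), and your Minkowski-in-$\ell^{q/3}$ route to $\sum_\mu\prod_i\|\widetilde\zeta_\mu g_i\|_p\lesssim\prod_i\|g_i\|_p$ would in fact require $p\le 3$, which also fails (take $p=q=4$). The paper avoids this entirely: it bounds the main term directly by
\[
\sum_B\prod_{i=1}^3\|g_i\chi_{\widetilde B}\|_p^{q/3}
\ \le\ \prod_{i=1}^3\Big(\sum_B\|g_i\chi_{\widetilde B}\|_p^{q}\Big)^{1/3}
\ \le\ \prod_{i=1}^3\Big(\sum_B\|g_i\chi_{\widetilde B}\|_p^{p}\Big)^{q/(3p)}
\ \lesssim\ \lambda^{\tilde c\varepsilon}\prod_{i=1}^3\|g_i\|_p^{q/3},
\]
using only H\"older and the inclusion $\ell^p\subset\ell^q$ for $p\le q$. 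Second, the frequency-support issue you flag (multiplying by $\widetilde\zeta_\mu$ destroys the localization to $A_\lambda(J_i)$) is real; the paper handles it by writing $T_\lambda g_i=\widetilde K_{\lambda,i}(\cdot,t)\ast g_i$ with $\mathcal F_x\widetilde K_{\lambda,i}$ supported in an $r\lambda$-neighborhood of $A_\lambda(J_i)$, so that $\widetilde K_{\lambda,i}(\cdot,t)\ast(g_i\chi_{\widetilde B})$ retains the transversality needed for Proposition~\ref{localtri}. Your parenthetical ``phrase it via the kernel'' is pointing at this, but it has to be made explicit.
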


\begin{proof}

We follow a standard localization argument (e.g., see \cite[Proposition 2.10]{Lee1}). 
For a constant $r$ satisfying $\lambda^{-1}\ll r \ll \delta$, 
let $N_{r\lambda } (A_\lambda(J_i))$ denote the $r\lambda $-neighborhood of $A_\lambda(J_i)$,  $i=1,2,3$. 
Let $\varphi_i$ be a smooth function supported in $N_{r\lambda } (A_\lambda(J_i))$ such that
$\varphi_i = 1$ on $A_\lambda(J_i)$ and $|\partial_\xi^m \varphi_i(\xi)|\lesssim |\xi|^{-|m|}$.

Let us denote by $\mathcal F_x$ the Fourier transform in $x$ and define $\mathcal F_x( \widetilde K_{\lambda,i}(\cdot,t) ) =\mathcal F_x (K_\lambda(\cdot,t) ) \varphi_i $. 
Then $T_\lambda g_i (x,t) = \widetilde K_{\lambda,i}(\cdot, t) \ast g_i(x) $.
We consider a collection $\{ B \} $ of finitely overlapping unit balls  which  {cover} $\mathbb R^2$. 
Fixing $\varepsilon >0$, let  $\widetilde B$ be a ball of radius $\lambda^\varepsilon$ with the same center as $B$.
For each $i$, we decompose $g_i = g_{i,B} + g_{i,B^{\mathrm c}}$ where $g_{i,B} := g_i \chi_{\widetilde B}$ and $g_{i,B^{\mathrm c}} = g_i \chi_{\widetilde{ B}^{\mathrm c} }$. We get  
\begin{align*}
\Big\| \prod_{i=1}^3  T_\lambda    g_i  \Big\|_{L^{\frac q 3}(\mathbb R^2\times I  )}^{\frac q 3} 
 \lesssim \sum_B  \sum_{  \substack{(m_1,m_2,m_3):\\ m_j \in \{B, B^{\mathrm c}\}  }}
\int_I  \int_B  \prod_{i=1}^3  \big| \widetilde    K_{\lambda, i}(\cdot,t) \ast (   g_{i, m_i}     )(x) \big|^{\frac q 3}  dx dt    .
\end{align*}

We consider the case $m_1=m_2=m_3=B$ first. Since the Fourier transform of  $\widetilde    K_{\lambda,i}(\cdot,t) \ast (g_{i,B}  ) $ is supported in $N_{r\lambda } (A_\lambda(J_i) )$,
the transversality condition \eqref{transv} holds for $2^{-1}\delta$ replacing $\delta$ if we take a sufficiently small $r$. 
Applying Proposition \ref{localtri}, we get
\begin{equation*} 
\begin{aligned}
 \sum_B   \int_{B\times I}  \prod_{i=1}^3  \big| \widetilde    K_{\lambda,i}(\cdot,t) \ast (   g_{i,B}     )(x) \big|^{\frac q 3}    dx dt      
\lesssim \lambda^{q \gamma_0} \sum_B 
\prod_{i=1}^3 \| g_i\chi_{\widetilde B} \|_{p}^{\frac q3} \lesssim \lambda^{q \gamma_0 + \tilde c \varepsilon } 
\prod_{i=1}^3  \| g_i \|_{p}^{\frac q 3}
\end{aligned}
\end{equation*}
for $\gamma_0  > \frac12+\frac1p -\frac3q$ and $\tilde c>0$. The second inequality  follows by 
H\"older's inequality and the inclusion $\ell^p \subset \ell^q$ for $p \le q$ since the balls $\widetilde B$ overlap  at most  $C\lambda^{2\varepsilon}$.

If $m_i=B^{\mathrm c}$ for some $i$,  we use decay of $\widetilde K_{\lambda,i}$. 
As before, it is easy to show 
$|\widetilde K_{\lambda, i}(x,t)| \lesssim \lambda^2(1 + |x| )^{-M}$ for any $M\ge1$ (c.f., \eqref{kernelB}).   
Setting $\mathscr K (x ) = (1 +|x|)^{-3}$, we see $|\widetilde K_{\lambda,i} (\cdot,t)\ast g_{i,B} (x) |   
\lesssim \lambda^2    \mathscr K\ast |  g_i|(x) $
and
$\big| \widetilde K_{\lambda,i} (\cdot,t)\ast g_{i,B^{\mathrm c}} (x) \big|  
\lesssim \lambda^2  \lambda^{ \varepsilon (3-M)}  \mathscr K\ast | g_i|(x) $  if  $x \in B$. 
Thus we have 
\[ \prod_{i=1}^3  \big| \widetilde    K_{\lambda,i}(\cdot,t) \ast (   g_{i, m_i}  )(x) \big| \lesssim  \lambda^{c_1 -c_2 \varepsilon M}   \prod_{i=1}^3  \mathscr K\ast |  g_i|(x), \quad  (x,t) \in B\times I, \] 
for some constants $c_1, c_2>0$. 
By H\"older's inequality and Young's convolution inequality, there are positive constants $c_1, c_2$ such that
\begin{align*}
\sum_B  \sum_{ (m_1,m_2,m_3)\neq (B, B, B)} \iint_{B \times I}  \prod_{i=1}^3  \big| \widetilde    K_{\lambda,i}(\cdot,t) \ast (   g_{i, m_i}     )(x) \big|^{\frac q 3}   dx dt  
\lesssim
\lambda^{c_1 -c_2 \varepsilon M}  \prod_{i=1}^3  
\|     g_i \|_{p}^{\frac q3} 
\end{align*}
for $p\le q$. Combining the estimates above,
we obtain 
\[
\Big\| \prod_{i=1}^3  T_\lambda g_i  \Big\|_{L^{\frac q3}(\mathbb R^2\times I )}^{\frac q3} 
\lesssim \big( \lambda^{   q  \gamma_0
+\tilde c \varepsilon}  +  \lambda^{c_1 -c_2\varepsilon M}  \big)
	\prod_{i=1}^3  \| g_i \|_{p}^{\frac q 3}
\]
for $\gamma_0  > \frac12+\frac1p -\frac3q$. Taking $\varepsilon>0$ small enough and then a large $M$, 
we obtain the desired estimate \eqref{pqest}.
\end{proof}

We now obtain \eqref{pqest}  with weights $\omega\in \Omega(\alpha)$ via the following lemma. 

\begin{lem}\label{elementary}
Let $\omega \in \Omega(\alpha)$, $0<\alpha \le 3$.
Suppose $\widehat F$ is supported in $\mathbb B^3(0,\lambda)$. Then, we have 
$\| F\|_{L^q(\mathbb R^3; \omega)} \lesssim \lambda^{(3-\alpha)/q}\|F\|_{L^q(\mathbb R^3)}$ for $q\ge1$.
\end{lem}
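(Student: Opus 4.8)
The plan is to use the fact that the hypothesis $\supp\widehat F\subset\mathbb B^3(0,\lambda)$ makes $F$ essentially constant on balls of radius $\lambda^{-1}$, whereas a measure $\omega\in\Omega(\alpha)$ puts mass at most $\lambda^{-\alpha}$ on such a ball against Lebesgue mass $\sim\lambda^{-3}$; the ratio $\lambda^{3-\alpha}$ between these is precisely the factor to be gained. To make this precise, fix $\chi\in C_c^\infty(\mathbb B^3(0,2))$ equal to $1$ on $\mathbb B^3(0,1)$ and set $\widehat{\Phi_\lambda}(\xi)=\chi(\xi/\lambda)$, so that $F=F\ast\Phi_\lambda$ and, for every $N$, $|\Phi_\lambda(x)|\lesssim_N\lambda^3(1+\lambda|x|)^{-N}$ with $\|\Phi_\lambda\|_{L^1}\lesssim 1$.

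Then, for any ball $B$ of radius $\lambda^{-1}$ and any $x\in B$, splitting the convolution $F\ast\Phi_\lambda$ over the dyadic annuli $2^{k+1}B\setminus 2^kB$ around $B$ and applying H\"older's inequality on each piece, one gets
\Be
|F(x)| \lesssim_N \sum_{k\ge 0} 2^{-Nk}\Big( |2^kB|^{-1}\int_{2^kB}|F(y)|^q\, dy\Big)^{1/q}.
\Ee
Raising to the $q$-th power and applying Jensen's inequality with respect to the probability weights proportional to $2^{-Nk}$ then yields
\Be
\sup_{x\in B}|F(x)|^q \lesssim_N \sum_{k\ge 0} 2^{-Nk}\,|2^kB|^{-1}\int_{2^kB}|F(y)|^q\, dy.
\Ee

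Next I would take a finitely overlapping cover $\{B_j\}$ of $\mathbb R^3$ by balls of radius $\lambda^{-1}$. By \eqref{omega}, $\int_{B_j}\omega\lesssim\lambda^{-\alpha}$ for each $j$, so
\Be
\int_{\mathbb R^3}|F|^q\omega = \sum_j\int_{B_j}|F|^q\omega \le \sum_j\Big(\int_{B_j}\omega\Big)\sup_{x\in B_j}|F(x)|^q \lesssim_N \lambda^{-\alpha}\sum_{k\ge0}2^{-Nk}\sum_j |2^kB_j|^{-1}\int_{2^kB_j}|F|^q.
\Ee
Since $|2^kB_j|^{-1}\sim 2^{-3k}\lambda^3$ and, for each fixed $k$, the dilated balls $\{2^kB_j\}_j$ have overlap $\lesssim 2^{3k}$, the inner sum over $j$ is $\lesssim 2^{-3k}\lambda^3\cdot 2^{3k}\|F\|_{L^q(\mathbb R^3)}^q=\lambda^3\|F\|_{L^q(\mathbb R^3)}^q$, uniformly in $k$. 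Summing the geometric series $\sum_k 2^{-Nk}$ and taking $q$-th roots gives $\|F\|_{L^q(\mathbb R^3;\omega)}\lesssim\lambda^{(3-\alpha)/q}\|F\|_{L^q(\mathbb R^3)}$, as claimed.

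I do not expect a serious obstacle: this is a standard ``locally constant'' reduction. The only points needing a little care are the $L^q$-averaged form of the local constancy estimate together with the summability of the Schwartz tails, and the overlap bookkeeping for the dilated balls. Alternatively, one could prove the endpoint $q=1$ by the identical computation and observe that $q=\infty$ is trivial, since $\|F\|_{L^\infty(\omega)}\le\|F\|_{L^\infty}$ and the exponent $(3-\alpha)/q$ vanishes, and then interpolate; but the direct argument above already treats all $q\ge 1$ at once.
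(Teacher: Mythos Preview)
Your argument is correct and rests on the same idea as the paper's proof: the reproducing formula $F=F\ast\Phi_\lambda$ together with the Schwartz decay of $\Phi_\lambda$ and the $\alpha$-dimensional bound \eqref{omega}. The execution, however, is organized differently. The paper avoids the ball cover and the overlap bookkeeping entirely: from $|F|^q\lesssim |F|^q\ast|\varphi_\lambda|$ (Jensen, since $\|\varphi_\lambda\|_1\sim 1$) and Fubini one gets
\[
\|F\|_{L^q(\omega)}^q \lesssim \int |F(y)|^q\,(|\varphi_\lambda|\ast\omega)(y)\,dy \le \| |\varphi_\lambda|\ast\omega\|_\infty\,\|F\|_{L^q}^q,
\]
and a single dyadic decomposition of the Schwartz tail (your same computation, but applied to $|\varphi_\lambda|\ast\omega$ rather than to $|F|$) gives $\| |\varphi_\lambda|\ast\omega\|_\infty\lesssim\lambda^{3-\alpha}$. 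Your route trades this Fubini step for a discrete cover $\{B_j\}$ and the overlap estimate for the dilates $\{2^kB_j\}$; it is longer but perfectly sound, and the $L^q$-averaged local constancy bound plus the $2^{3k}$ overlap control you wrote down are exactly what is needed. Either way the key input is the same pointwise bound on $|\varphi_\lambda|$ and the condition \eqref{omega}; the paper's version simply packages the counting as a convolution.
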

\begin{proof}
Let $\varphi$ be a Schwartz function such that $\widehat \varphi=1$ on $\mathbb B^3(0,1)$,
and set $\varphi_\lambda = \lambda^3 \varphi(\lambda \cdot)$.
Since $F=F \ast \varphi_\lambda$, we have $\|F\|_{L^q(\mathbb R^3; \omega)} \le \|F\|_{L^q(\mathbb R^3 )} \| |\varphi_\lambda| \ast \omega\|_{\infty}^{1/q}$.
By rapid decay of $\varphi $, we see
\begin{align*}
|\varphi_\lambda | \ast \omega (y) 
 \le C_N  \lambda^3 \sum_{\ell\ge 0} 2^{-N\ell} \int_ {\mathbb B^3(y,2^\ell \lambda^{-1} )}   \omega (z) dz  
\end{align*} 
for any   $N>3$.  Since $\omega \in \Omega(\alpha)$,  it follows by \eqref{omega}  that  $\| |\varphi_\lambda| \ast \omega\|_{\infty} \lesssim \lambda^{3-\alpha}$. Hence, we get the desired bound.
\end{proof}

\begin{prop}\label{w-globtri} Let $\phi \in \mathfrak S(\epsilon_\circ)$, $\omega \in \Omega(\alpha)$, $0<\alpha\le 3$, and  $\lambda^{-1} \ll \delta \ll 1$.  
Suppose \eqref{transv} holds whenever $\theta_i \in J_i$, $i=1,2,3$.  
If \eqref{pqest} holds for some $\gamma_0$, then  
\begin{equation}\label{w-tri}
\Big\| \prod_{i=1}^3    T_\lambda g_i \Big\|_{L^{\frac q 3} (\mathbb R^2\times I;\omega)}
\le C   \lambda^{3 \gamma}   
\prod_{i=1}^3 \| g_i \|_p    
\end{equation}
holds for $\gamma= \gamma_0+\frac{3-\alpha}q $ whenever $\supp \widehat{g_i}\subset A_\lambda(J_i)$.
\end{prop}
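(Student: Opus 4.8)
The plan is to deduce Proposition \ref{w-globtri} from the unweighted trilinear estimate \eqref{pqest} by inserting the weight through Lemma \ref{elementary}, after first reducing the spatial frequency support of the product $\prod_{i=1}^3 T_\lambda g_i$ to a ball of radius $O(\lambda)$. First I would observe that each $T_\lambda g_i$ has Fourier support (in $(x,t)$) contained in $\Gamma_\lambda(J_i) \subset A_\lambda([-1,1]) \times \mathbb R$, which is a subset of $\mathbb B^3(0, C\lambda)$ for an absolute constant $C$; hence the product $F := \prod_{i=1}^3 T_\lambda g_i$ has Fourier support in the sumset, contained in $\mathbb B^3(0, 3C\lambda)$. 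After a harmless rescaling of $\lambda$ (absorbing the constant $3C$), Lemma \ref{elementary} applies to $F$ with exponent $q/3$ in place of $q$, giving
\[
\Big\| \prod_{i=1}^3 T_\lambda g_i \Big\|_{L^{q/3}(\mathbb R^2\times I; \omega)} \lesssim \lambda^{(3-\alpha)/(q/3)} \Big\| \prod_{i=1}^3 T_\lambda g_i \Big\|_{L^{q/3}(\mathbb R^2 \times I)}.
\]

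Next I would note that $\lambda^{(3-\alpha)/(q/3)} = \lambda^{3(3-\alpha)/q}$, and then invoke the hypothesis \eqref{pqest}: under the transversality condition \eqref{transv}, the right-hand side is bounded by $C \lambda^{3\gamma_0} \prod_{i=1}^3 \|g_i\|_p$. Combining the two displays yields
\[
\Big\| \prod_{i=1}^3 T_\lambda g_i \Big\|_{L^{q/3}(\mathbb R^2\times I; \omega)} \lesssim \lambda^{3\gamma_0 + 3(3-\alpha)/q} \prod_{i=1}^3 \|g_i\|_p = \lambda^{3\gamma} \prod_{i=1}^3 \|g_i\|_p,
\]
with $\gamma = \gamma_0 + (3-\alpha)/q$, which is exactly \eqref{w-tri}. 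The roles of $\epsilon_\circ$, $\delta$, and the scale separation $\lambda^{-1}\ll\delta\ll 1$ are inherited verbatim from the hypothesis that \eqref{pqest} holds and from the transversality assumption; no new constraints on these parameters are introduced.

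The only genuinely delicate point is bookkeeping the Fourier support: one must confirm that the frequency variable dual to $t$, namely $\tau = \eta\phi(\rho/\eta)$ on $\Gamma_\lambda(J_i)$, stays $O(\lambda)$ — this follows since $|\phi| \le \phi_\circ + \epsilon_\circ \le C$ on $[-1,1]$ for $\phi \in \mathfrak S(\epsilon_\circ)$ and $\eta \le 4\lambda$ on $A_\lambda(J_i)$ — and that taking a product in physical space corresponds to convolution (hence addition of supports) in frequency, so a three-fold product keeps the support inside $\mathbb B^3(0, O(\lambda))$. I expect this to be entirely routine; the substance of the proposition is already carried by \eqref{pqest} and Lemma \ref{elementary}, and this proof is simply the mechanism that transfers the weight onto the estimate. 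One should also remark that the exponent $q/3$ may be less than $1$ when $q<3$; Lemma \ref{elementary} is stated for $q\ge 1$, so in that regime one instead applies it with exponent $1$ after using $\|G\|_{L^{q/3}} $-type reductions, or more cleanly one simply notes that the lemma's proof (via $F = F*\varphi_\lambda$ and $\||\varphi_\lambda|*\omega\|_\infty \lesssim \lambda^{3-\alpha}$) goes through verbatim for any exponent $s>0$ with the bound $\lambda^{(3-\alpha)/s}$, which is the form we use here with $s = q/3$.
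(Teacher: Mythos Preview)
Your argument has a genuine gap at the step where you invoke Lemma~\ref{elementary}. That lemma bounds $\|F\|_{L^{q/3}(\mathbb R^3;\omega)}$ by $\lambda^{3(3-\alpha)/q}\|F\|_{L^{q/3}(\mathbb R^3)}$, with the unweighted norm taken over \emph{all} of $\mathbb R^3$, not over $\mathbb R^2\times I$. You write the right-hand side as $\|\prod_i T_\lambda g_i\|_{L^{q/3}(\mathbb R^2\times I)}$, which is what would match \eqref{pqest}, but that is not what the lemma gives. And the norm over $\mathbb R^3$ is infinite: each $T_\lambda g_i(x,t)$ is a Fourier extension from a conic surface and has no decay as $|t|\to\infty$ (e.g.\ if $\widehat{g_i}$ is concentrated near a single frequency, $|T_\lambda g_i|$ is essentially constant in $(x,t)$), so $\prod_i T_\lambda g_i\notin L^{q/3}(\mathbb R^2\times\mathbb R)$. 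The compact Fourier support you carefully checked is correct, but it is the compact \emph{physical} support in $t$ that is missing.

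The paper handles exactly this issue. One multiplies by a smooth cutoff $\widetilde\chi(t)$ equal to $1$ on $I$, so that $\widetilde\chi\,T_\lambda g_i$ is now compactly supported in $t$ and the unweighted $L^{q/3}(\mathbb R^3)$ norm is finite (and controlled by \eqref{pqest} on a slightly larger interval). The price is that multiplication by $\widetilde\chi$ convolves the space–time Fourier transform with $\widehat{\widetilde\chi}$ in the $\tau$-variable, so the Fourier support is no longer contained in a ball of radius $O(\lambda)$. One therefore splits $\widetilde\chi\,T_\lambda g_i=T_\lambda^1 g_i+T_\lambda^2 g_i$, where $T_\lambda^1$ has $\tau$-frequency truncated to $|\tau|\lesssim\lambda$, and shows by integration by parts that the tail $T_\lambda^2 g_i$ has kernel of size $O_M(\lambda^{-M})$. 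Lemma~\ref{elementary} then applies to $\prod_i T_\lambda^1 g_i$, and the remaining cross terms are absorbed by the arbitrary polynomial gain. Your outline captures the heuristic, but this time-localization-plus-frequency-tail step is the substance of the proposition and cannot be skipped.
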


\begin{proof}
Let $\widetilde \chi \in \mathrm C_0^\infty((1/2,4))$ such that $\widetilde \chi =1$ on $I=[1,2]$.
We only have to  show \eqref{w-tri} with $T_\lambda$ replaced by $\widetilde{\chi}\, T_\lambda$.
Since $\widetilde\chi$ is compactly supported,  the support of the space-time Fourier transform of $   \widetilde \chi(t) T_\lambda g_i(x,t)$ is unbounded.   
So, in order to apply Lemma \ref{elementary}, 
we decompose $\widetilde \chi \, T_\lambda f$ in such a way that the Fourier supports of the consequent operators are contained in either a bounded set or  its complement.
Let us define a frequency localized operator  $T_\lambda^1$ by 
\begin{align*}
 \mathcal F_{x,t} \big( T_\lambda^1 g \big) (\eta,\rho,\tau)   = \mathcal F_{x,t} \big(  \widetilde \chi   \, T_\lambda g \big)(\eta,\rho,\tau) \beta_0 (  (c\lambda)^{-1}|\tau| ),
\end{align*}
where $c>0$ is a constant  to be chosen later. 
We also set $T_\lambda^2 g =   (  \widetilde \chi   \, T_\lambda - T_\lambda^1) g $.  Then, we have  
\begin{align*}
& \Big\|  \prod_{i=1}^3   \widetilde \chi  \,   T_\lambda g_i \Big\|_{L^{\frac q 3} (\mathbb R^3;\omega)}  \le {\rm I} + {\rm I\!I}, 
\end{align*}
where 
\begin{align*} 
{\rm I} = \Big\|  \prod_{i=1}^3 T_\lambda^1 g_i \Big\|_{L^{\frac q 3} (\mathbb R^3;\omega)},  \quad
{\rm I\!I} = \sum_{\substack{(m_1,m_2,m_3)\neq(1,1,1)  :\\ m_j \in\{1,2\}    } } \Big\| \prod_{i=1}^3   T_\lambda^{m_i} g_i \Big\|_{L^{\frac q3} (\mathbb R^3;\omega)} . 
\end{align*}

First, we show ${\rm I\!I} \lesssim \lambda^{-M} \prod_{i=1}^3 \| g_i \|_{p}$ for any $M >1$. It is enough to show
\begin{align}\label{T2}
\|  T_\lambda^{2}g \|_{ L^{q} (\mathbb R^3;\omega)} \lesssim \lambda^{-M} \|g\|_p
\end{align}
for any $M>0$ if $p\le q$. 
Once we have \eqref{T2}, it follows that  $\| T_\lambda^1g \|_{L^q(\mathbb R^3;\omega)} \lesssim  \lambda^{2}\|g\|_p$ 
since $|T_\lambda^1g | \le | \widetilde \chi   \, T_\lambda g| + |T_\lambda^2g|$ and $\|  \widetilde \chi  \,  T_\lambda g \|_{L^{q} (\mathbb R^3;\omega)} \lesssim \lambda^2 \|g\|_{p}$ for $p\le q$ (see \eqref{kernelB} and \eqref{claim01}).
Thus, the desired bound on $\mathrm{I\!I}$ follows by H\"older's inequality. 

To prove \eqref{T2}, let us set
\[
\Phi (\eta,\rho,\tau) = \frac{1}{2\pi}\int e^{-i t ( \tau -  \eta \phi(\rho/\eta))} \widetilde \chi(t) dt \,   \beta(\lambda^{-1}\eta)\beta_0(|\rho/\eta|).
\]  
Recalling the definition of $T_\lambda$ (\eqref{TR}), we have $\mathcal F_{x,t}( \widetilde \chi  \, T_\lambda g) (\eta,\rho,\tau)= \Phi(\eta,\rho,\tau) \widehat g(\eta,\rho)$. 
Note $T_\lambda^2 = ( \widetilde \chi  \,  T_\lambda - T_\lambda^1)$, so
$
  T_\lambda^2 g(x,t) = \mathfrak K(\cdot, t)\ast g(x),
$
where 
\[\mathfrak K(x ,t) = \iiint \big(1 -\beta_0((c\lambda)^{-1} |\tau|)\big) \Phi(\eta,\rho,\tau)   e^{i (x \cdot(\eta,\rho) + t\tau)} d\eta d\rho d\tau . \]   
Taking a sufficiently large $c>0$, by integration by parts, we get
\begin{equation}\label{k-decay}
|\mathfrak K(x,t) | \lesssim \lambda^{-M} (1 + |x|)^{-M} (1 + |t|)^{-M}
\end{equation}
for $M\ge1$. 
We see $\partial_{\eta,\rho,\tau}^m \big( (1 -\beta_0((c\lambda)^{-1} |\tau|))\Phi(\eta,\rho,\tau)  \big)$ are bounded by $C_M \lambda^{-M} (1+ |\tau|)^{-M}$ for any $M \ge 1$ since $\partial_{\eta,\rho}^m\phi$ are bounded by a constant depending only on the multi index $m$. Repeated integration by parts in $\eta,\rho,\tau$ gives \eqref{k-decay}. 
Using \eqref{k-decay}, we get \eqref{T2} with $p=q$ by Schur's test. Also, \eqref{T2} holds with $q=\infty$ by Young's convolution inequality. By interpolation, we get \eqref{T2} for any $1 \le p \le q \le \infty$.

We now turn to  the term ${\rm I}$. Since the Fourier transform of $T_\lambda^1 g_i $ is supported in a ball of radius $2c \lambda$, we can apply Lemma \ref{elementary} to $\rm I$. So, we get
\begin{align*} 
  {\rm I}  
 \lesssim \lambda^{\frac{3(3-\alpha)}{q}} \Big\|  \prod_{i=1}^3 T_\lambda^1 g_i \Big\|_{L^{\frac q 3}  (\mathbb R^3 )}. 
\end{align*}
Since $|T_\lambda^1g|\le | \widetilde \chi  T_\lambda g| + |T_\lambda^2 g|$, using $\|  \widetilde \chi  \, T_\lambda g \|_{L^q(\mathbb R^3)} \lesssim \lambda^2\|g\|_p $ and \eqref{T2}, we see that $ \big\| \prod_{i=1}^3 T_\lambda^1 g_i \big\|_{ q/ 3}$ is bounded by $ \|  \prod_{i=1}^3  \widetilde \chi  \,  T_\lambda g_i \|_{q/3}+ C \lambda^{-N} \prod_{i=1}^3 \|g_i\|_{p}$ for any $N>0$. Now we apply  Lemma \ref{globtri} to $ \|  \prod_{i=1}^3  \widetilde \chi  \,  T_\lambda g_i \|_{q/3}$ expanding the interval $I$ slightly. 
Choosing $N$ large enough, we obtain 
\begin{align*}
 {\rm I}  \lesssim \lambda^{3(\gamma_0  +  \frac{3-\alpha} q) }  \prod_{i=1}^3 \|g_i\|_{p}. 
\end{align*}
Combining all the bounds on ${\rm I}$ and ${\rm I\!I}$, we get the desired estimate \eqref{w-tri} by taking  $M$ sufficiently large.
\end{proof}
  
The estimate \eqref{w-tri} plays a crucial role in proving Theorem \ref{LS-est}. However, in order to prove Theorem \ref{smoothing} (or Proposition \ref{trg}), it suffices to obtain the following estimate which is  not necessarily sharp.

\begin{cor}\label{globtri22}
Let $\phi \in \mathfrak S(\epsilon_\circ)$, $\omega \in \Omega(\alpha)$ for $1<\alpha \le 2$, and $\lambda^{-1} \ll \delta \ll 1$. 
Suppose \eqref{transv} holds whenever $\theta_i \in J_i$, $i=1,2,3$, and let \begin{equation}\label{tri-pq} 
 \frac1q \le \frac1p <\frac\alpha q ~ \text{ and }~
  p > \begin{cases}
 \,  \frac{2\alpha+3}{2\alpha}  , ~& \quad \hfill \frac32 <\alpha \le 2, \\
 \,  \frac{6-2\alpha}{\alpha} , ~&\quad   \hfill 1<\alpha \le \frac32.
 \end{cases}
 \end{equation}
Then we have \eqref{w-tri} for some $\gamma<\frac12$ 
whenever $\supp \widehat{g_i}\subset A_\lambda(J_i)$.
\end{cor}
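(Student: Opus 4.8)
The plan is to deduce the estimate from Proposition \ref{w-globtri} by feeding it unweighted trilinear bounds \eqref{pqest} and interpolating; recall that for $1<\alpha\le 2$ we have $\kappa(\alpha)=\alpha+1$, so Proposition \ref{w-globtri} upgrades \eqref{pqest} with exponent $\gamma_0$ to \eqref{w-tri} with $\gamma=\gamma_0+\tfrac{3-\alpha}{q}$.

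First I would dispose of the part of the region \eqref{tri-pq} that lies in the region $R_0:=\{(\tfrac1p,\tfrac1q):\tfrac1q\le\min(\tfrac1p,\tfrac1{3p}+\tfrac16,\tfrac2{3p'})\}$ of Proposition \ref{localtri}. On $R_0$, Lemma \ref{globtri} gives \eqref{pqest} for every $\gamma_0>\tfrac12+\tfrac1p-\tfrac3q$, hence Proposition \ref{w-globtri} gives \eqref{w-tri} for every $\gamma>\tfrac12+\tfrac1p-\tfrac\alpha q$; the hypothesis $\tfrac1p<\tfrac\alpha q$ in \eqref{tri-pq} then makes some choice $\gamma<\tfrac12$ admissible.

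For the remaining, near-diagonal, part of \eqref{tri-pq} I would interpolate with several endpoint weighted trilinear estimates, each obtained by applying Proposition \ref{w-globtri} to an unweighted bound available at this point: the sharp local smoothing of \cite{GWZ} (which gives \eqref{pqest} with $\gamma_0\to0$ on the diagonal $2\le p=q\le4$ and $\gamma_0=\tfrac1p-\tfrac12$ for $1\le p\le2$), the trilinear restriction Theorem \ref{BCT} together with Plancherel and the $L^2$--$L^4$ Strichartz estimate (which give \eqref{pqest} with $\gamma_0\to0$ on the segment $(\tfrac12,\tfrac1q)$, $2\le q\le4$, in particular at $(\tfrac12,\tfrac13)$), and the trivial kernel bounds near the corners $(0,0)$, $(1,0)$, $(1,1)$. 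The vertex $(\tfrac12,\tfrac13)$ contributes \eqref{w-tri} with $\gamma$ close to $1-\tfrac\alpha3$, which is $<\tfrac12$ exactly when $\alpha>\tfrac32$; this is the source of the two cases in \eqref{tri-pq}, and working out the interpolation exponents shows that the thresholds $p>\tfrac{2\alpha+3}{2\alpha}$ (for $\tfrac32<\alpha\le2$) and $p>\tfrac{6-2\alpha}{\alpha}$ (for $1<\alpha\le\tfrac32$) are exactly what is needed for the interpolated $\gamma$ to stay below $\tfrac12$ throughout \eqref{tri-pq}.

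I expect the main obstacle to be this near-diagonal region, where $R_0$ does not reach and where the bounds obtained by applying H\"older's inequality to the linear operator only yield $\gamma\ge\tfrac12$: here one must genuinely use the transversality built into $\prod_{i=1}^3T_\lambda g_i$. Concretely, I would decompose each $g_i$ into angularly separated caps, apply Theorem \ref{BCT} to the ``broad'' triples and the rescaling Lemma \ref{Lscale} (which reduces a ``narrow'' triple, with caps of width $h$, to scale $h^2\lambda$) to the remaining ones, and then reassemble; balancing the gain from Lemma \ref{Lscale} against the weight loss $\tfrac{3-\alpha}{q}$ supplied by Lemma \ref{elementary} is precisely what generates the stated $p$-thresholds. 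The technical point that truncating $t\in I$ enlarges the space-time Fourier support is already handled inside the proof of Proposition \ref{w-globtri} through the frequency-localized pieces $T_\lambda^1g_i$, and should simply be reused.
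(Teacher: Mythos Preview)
Your first two paragraphs are correct and match the paper: on the region $R_0=\{(\tfrac1p,\tfrac1q):\tfrac1q\le\min(\tfrac1p,\tfrac1{3p}+\tfrac16,\tfrac2{3p'})\}$ one combines Proposition~\ref{localtri} with Proposition~\ref{w-globtri} to get the local weighted trilinear bound with $\gamma>\tfrac12+\tfrac1p-\tfrac\alpha q$, and the hypothesis $\tfrac1p<\tfrac\alpha q$ makes some $\gamma<\tfrac12$ work.

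For the near-diagonal part of \eqref{tri-pq} lying outside $R_0$, however, you are making the argument much harder than it is. The paper does not interpolate or run any further decomposition: it simply observes that the $p$-thresholds in \eqref{tri-pq} are \emph{exactly} the condition that the point $(\tfrac1p,\tfrac1{\alpha p})$ lies in $R_0$ (the constraint $\tfrac1{\alpha p}\le\tfrac1{3p}+\tfrac16$ gives $p\ge\tfrac{6-2\alpha}{\alpha}$, and $\tfrac1{\alpha p}\le\tfrac2{3p'}$ gives $p\ge\tfrac{2\alpha+3}{2\alpha}$). Thus for $p$ strictly above the threshold one can pick $q_0$ just below $\alpha p$ with $(\tfrac1p,\tfrac1{q_0})\in R_0$ and $\gamma<\tfrac12$ there. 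Since $\omega\in\Omega(\alpha)$ has bounded mass on $\mathbb B^2(0,1)\times I$, H\"older's inequality in the local estimate pushes this to every $q\in[p,q_0]$, covering the whole of \eqref{tri-pq}. A globalization as in Lemma~\ref{globtri} then gives \eqref{w-tri}.

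Your paragraph~4 is a genuine misstep. Lemma~\ref{Lscale} produces a bound in terms of the induction quantity $Q(h^2\lambda)$, and the induction-on-scales argument that closes on $Q$ is the content of Proposition~\ref{trg}, which \emph{uses} Corollary~\ref{globtri22} as its trilinear input. Invoking that machinery inside the proof of Corollary~\ref{globtri22} would invert the logical dependence (or, at best, leave you with an unclosed bound involving $Q$). The point of the corollary is precisely to supply a closed trilinear estimate with $\gamma<\tfrac12$, and the one-line H\"older step above accomplishes that.
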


\begin{proof}
By Proposition \ref{localtri} and Proposition \ref{w-globtri}, we  have 
\begin{equation}\label{gg}
\Big\| \prod_{i=1}^3  T_\lambda g_i \Big\|_{L^{\frac q 3} (\mathbb B^2(0,1)\times I;\omega)}
\le C  \lambda^{ 3\gamma}
\prod_{i=1}^3 \| g_i \|_p   
\end{equation}
for $\gamma'> \frac12 +\frac1p -\frac \alpha q$ provided that $\frac 1q \le \min(\frac 1p,~ \frac{1}{3p}+\frac 16,~ \frac 2{3p'})$. 
By H\"older's inequality,  \eqref{gg} continues to be true for some $\gamma'<\frac12$ if $p,q$ satisfy \eqref{tri-pq}.  
Indeed, there is a $q$  such that   $\frac1p =\frac\alpha q$ for $p> \frac{6-2\alpha}{\alpha}$  when $1<\alpha \le \frac32 $, and  for $p>\frac{2\alpha+3}{2\alpha}$ when  $ \frac32 <\alpha \le 2$. 

Repeating the same argument as in the proof of Lemma \ref{globtri},  one can extend \eqref{gg} to the global estimate \eqref{w-tri} as long as $q \ge p$. We omit the details. 
\end{proof}

\subsection{Proof of Proposition \ref{trg}}
In this subsection, we prove Proposition \ref{trg} by combining Lemma \ref{Lscale} and Corollary \ref{globtri22}.
To this end, we decompose the frequency support of $T_\lambda^\phi g$ independent of particular choice of $\phi$. 

\subsubsection*{Decomposition}
Let $K_i$, $i =0,1,2$ be dyadic numbers such that
\[ \lambda^{-1/3} \ll   K_2 \ll K_1 \ll K_0 =1. \]
For each $K_i$, we consider a collection $\mathfrak J^i$ of dyadic  intervals $J^i=[(j-1)K_i, (j+1)K_i]$ for $j \in \mathbb Z$, $|j| \le K_i^{-1}$ such that union of $J^i$ covers $[-1,1]$.

Let $\varrho \in \mathrm C_0^\infty((-1,1))$ satisfying $\sum_{j \in \mathbb Z} \varrho(\cdot-j)=1$.
For an interval $J=[\theta-K,\theta+K]$, we denote $\varrho_{J}^{}=\varrho(K^{-1}(\cdot-\theta))$.
Note that $\sum_{J^i \in \mathfrak J^i} \varrho_{J^i}^{} =1$ on $[-1,1]$.
We set 
$\widehat {g^{}_{J^i}}(\eta,\rho)=\widehat g (\eta,\rho) \, \varrho_{J^i}^{}(\rho/\eta)$.
Then, we have 
\[
T_\lambda^\phi g(x,t) = \sum_{J^i \in \mathfrak J^i} T_\lambda^\phi g^{}_{J^i}(x,t), \quad i=0,1,2.
\]
Note that $\widehat {g^{}_{\!J^i}}$ is supported in a rectangle of dimensions $c \lambda \times c \lambda K_i$ for a constant $c>0$.  

Following the argument in \cite[Section 3]{LV}, we have the next lemma. 
\begin{lem}
For each $(x,t) \in \mathbb R^2\times I$, there exists a constant $C>0$, independent of $K_i$ and $(x,t)$, such that  
\begin{equation}\label{divideU}
\begin{aligned}
|T_\lambda^\phi g(x,t)|
\le   \sum_{i=1}^2 C K_{i-1}^{-2} &\max_{J^i\in \mathfrak J^i} |T_\lambda^\phi  g^{}_{J^i}(x,t)| \\
+&C K_2^{-4} \max_{ (J^2_1,J^2_2,J^2_3) \in \mathfrak J^2(K_2) } \prod_{i=1}^3|T_\lambda^\phi  g^{}_{J^2_i} (x,t) |^{\frac 13}  ,
\end{aligned}
\end{equation}
where $  \mathfrak J^2( K_2) :=  \{ (J^2_1, J^2_2, J^2_3 ): J^2_1,J^2_2,J^2_3 \in \mathfrak J^2,~ 
 \min_{k \neq l} \dist(J^2_k,J^2_l) \ge K_2  \} $.
\end{lem}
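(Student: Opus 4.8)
The plan is to reduce everything to a clean combinatorial/geometric dichotomy for triples of angular caps, following the scheme of \cite[Section 3]{LV}. Fix $(x,t)\in\mathbb R^2\times I$. Starting from the finest decomposition $T_\lambda^\phi g=\sum_{J^2\in\mathfrak J^2}T_\lambda^\phi g_{J^2}$, I would classify the caps $J^2$ according to whether $|T_\lambda^\phi g_{J^2}(x,t)|$ is ``large'': say a cap is significant if its contribution is at least a fixed fraction (depending only on $\#\mathfrak J^2\sim K_2^{-1}$, hence polynomial in $K_2^{-1}$) of $|T_\lambda^\phi g(x,t)|$, so that the significant caps alone already account for $|T_\lambda^\phi g(x,t)|$ up to a constant multiple of $\max$ over them times $K_2^{-1}$. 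Among the significant caps I would pick three, $J^2_1,J^2_2,J^2_3$, that are pairwise $K_2$-separated if such a triple exists; in that case the three-fold product term on the right of \eqref{divideU} dominates, with the loss $K_2^{-4}$ absorbing the counting ($\binom{K_2^{-1}}{3}$-many triples, crudely $K_2^{-3}$, plus one more power from passing between the sum and the max). If no such separated triple exists, then all significant caps at scale $K_2$ are contained in the union of at most two intervals of length $\sim K_1$ (here $K_1$ is the next coarser scale); so after regrouping, $|T_\lambda^\phi g(x,t)|$ is controlled by $K_2^{-c}\max_{J^1}|T_\lambda^\phi g_{J^1}(x,t)|$ for some $J^1\in\mathfrak J^1$ — but actually one wants only the $K_0^{-2}=1$ or $K_1^{-2}$ factor, which is why the argument is run iteratively.

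Concretely I would iterate: at the coarsest scale $K_0=1$ there is just one cap and nothing to do; descending from scale $K_{i-1}$ to $K_i$, either the mass at scale $K_i$ (inside a fixed $K_{i-1}$-cap) splits into a $K_i$-separated triple — giving the product term — or it concentrates in $\le 2$ sub-caps of scale $K_i$, in which case one gains a factor $K_{i-1}^{-2}$ (two sub-caps, and another factor from sum-to-max) and continues the descent inside those. Since there are only three scales $K_0,K_1,K_2$, the recursion terminates after at most two steps, producing the two single-cap terms (with losses $K_0^{-2}$ and $K_1^{-2}$) plus, at the bottom scale $K_2$, either a further single-cap term or the separated-triple product term with loss $K_2^{-4}$. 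Collecting the alternatives and bounding every combinatorial loss by the stated powers of $K_{i-1}^{-1}$ and $K_2^{-1}$ yields \eqref{divideU}.

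The one genuinely geometric input is the claim used in the ``no separated triple'' alternative: if a finite family of intervals in $[-1,1]$ contains no three members that are pairwise $\ge K$-separated, then the family is covered by two intervals of length $O(K)$ — equivalently, after removing one ``outlier'' interval, the rest lies in an $O(K)$-interval. This is an elementary Helly-type fact about intervals on a line (sort by left endpoint; if the leftmost and rightmost are $\ge K$ apart and some third is $\ge K$ from both, we have a separated triple; otherwise every interval meets the $K$-neighbourhood of one of the two extremes), and I would state and prove it as a short sublemma.

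The main obstacle is purely bookkeeping: matching the crude counting bounds to the exact exponents $K_{i-1}^{-2}$ and $K_2^{-4}$ in \eqref{divideU}, and making sure the ``significant cap'' threshold is chosen uniformly (independent of $K_i$ and of $(x,t)$, as asserted) so that the constant $C$ does not secretly depend on the number of caps. Since the claimed losses are generous — any fixed negative power of $K_i$ would suffice for the later use in Proposition \ref{trg}, because $K_2\gg\lambda^{-1/3}$ and the eventual exponents are strict inequalities — there is ample room, and I would not optimize. Everything else is a transcription of the argument in \cite[Section 3]{LV} to the present normalization (general $\phi\in\mathfrak S(\epsilon_\circ)$ in place of the standard cone, which changes nothing since the decomposition $\sum_{J^i}\varrho_{J^i}$ is taken in the $\rho/\eta$ variable and is independent of $\phi$).
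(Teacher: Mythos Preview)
Your sketch is essentially correct and follows the same route the paper takes, namely the argument of \cite[Section 3]{LV}; note that the paper itself gives no proof here beyond that citation. The iterative dichotomy (transversal triple versus concentration into $O(1)$ sub-caps), together with the Helly-type sublemma you isolate, is exactly the mechanism, and your remark that the precise powers $K_{i-1}^{-2}$, $K_2^{-4}$ are generous and need not be optimized is on point.

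Two places where your bookkeeping is a bit loose: first, your explanation of the factor $K_{i-1}^{-2}$ (``two sub-caps, and another factor from sum-to-max'') does not really produce that factor --- the concentration alternative actually yields an $O(1)$ coefficient, and the stated $K_{i-1}^{-2}\ge 1$ is simply a harmless overestimate (which is why the argument still closes). Second, when a separated triple appears already at scale $K_1$, the trilinear term in \eqref{divideU} is written at scale $K_2$, so one must pass from the three $K_1$-caps to three $K_2$-caps (e.g.\ by picking inside each $J^1_i$ the $K_2$-subcap of largest contribution); the resulting triple remains $K_1$-separated, hence $K_2$-separated, and the extra loss $(K_1/K_2)^{O(1)}$ is absorbed by the $K_2^{-4}$. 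Neither point is a genuine obstacle.
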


If $(J^2_1,J^2_2,J^2_3) \in \mathfrak J^2(K_2)$, then 
 \eqref{transv} holds for $\theta_i \in J_i^2$, $i=1,2,3$. More precisely, we have the following. 
\begin{lem}\label{lem:transv}
Let $J_1,J_2,J_3$ be subintervals of $[-1,1]$ such that $\min_{k \neq l} \dist (J_k,J_l)  \gtrsim \delta^{\frac13} $. Suppose that $\phi \in \mathfrak S(\epsilon_\circ)$. 
If $\epsilon_\circ\in (0,1/8)$, then 
$ |\det \big( \mathrm N (\theta_1), \mathrm N(\theta_2), \mathrm N(\theta_3)\big)| \gtrsim \delta$ for $\theta_i \in J_i$, i=1,2,3.
\end{lem}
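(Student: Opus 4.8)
The plan is to prove Lemma~\ref{lem:transv} by a direct computation using the explicit formula \eqref{normal} for the normal vectors together with the fact that $\phi$ is a $\mathrm C^3$-perturbation of $\phi_\circ(\theta)=\theta^2/2$ of size at most $\epsilon_\circ<1/8$. First I would record that for $\phi=\phi_\circ$ one has $\mathrm N_\circ(\theta)=(\phi_\circ(\theta)-\theta\phi_\circ'(\theta),\phi_\circ'(\theta),-1)=(-\theta^2/2,\theta,-1)$, so that
\[
\det\big(\mathrm N_\circ(\theta_1),\mathrm N_\circ(\theta_2),\mathrm N_\circ(\theta_3)\big)
=\det\begin{bmatrix} -\theta_1^2/2 & \theta_1 & -1\\ -\theta_2^2/2 & \theta_2 & -1\\ -\theta_3^2/2 & \theta_3 & -1\end{bmatrix}
=-\tfrac12(\theta_1-\theta_2)(\theta_2-\theta_3)(\theta_3-\theta_1),
\]
a Vandermonde-type identity. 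Hence under the hypothesis $\min_{k\neq l}\dist(J_k,J_l)\gtrsim\delta^{1/3}$ we get $|\det(\mathrm N_\circ(\theta_1),\mathrm N_\circ(\theta_2),\mathrm N_\circ(\theta_3))|\gtrsim\delta$ uniformly for $\theta_i\in J_i$.

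Next I would pass from $\phi_\circ$ to a general $\phi\in\mathfrak S(\epsilon_\circ)$ by a perturbation argument. Writing $\mathrm N(\theta)=(\phi(\theta)-\theta\phi'(\theta),\phi'(\theta),-1)$, the components of $\mathrm N$ depend only on $\phi'$ and $\phi$ restricted to $[-1,1]$, and $\|\phi-\phi_\circ\|_{\mathrm C^3([-1,1])}\le\epsilon_\circ$ gives $|\phi(\theta)-\phi_\circ(\theta)|\le\epsilon_\circ$ and $|\phi'(\theta)-\phi_\circ'(\theta)|\le\epsilon_\circ$ on $[-1,1]$; thus $|\mathrm N(\theta)-\mathrm N_\circ(\theta)|\lesssim\epsilon_\circ$ uniformly. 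However, a crude perturbation of the determinant is not enough, because the unperturbed determinant is only of size $\delta$, which can be much smaller than $\epsilon_\circ$. To handle this I would instead factor the determinant. The key observation is that $\det(\mathrm N(\theta_1),\mathrm N(\theta_2),\mathrm N(\theta_3))$, as a function of $(\theta_1,\theta_2,\theta_3)$, vanishes whenever two of the $\theta_i$ coincide (two equal rows), so it is divisible by $(\theta_1-\theta_2)(\theta_2-\theta_3)(\theta_3-\theta_1)$; writing
\[
\det\big(\mathrm N(\theta_1),\mathrm N(\theta_2),\mathrm N(\theta_3)\big)
=(\theta_1-\theta_2)(\theta_2-\theta_3)(\theta_3-\theta_1)\, R(\theta_1,\theta_2,\theta_3),
\]
it remains to show $R$ is bounded below by a positive constant on $[-1,1]^3$. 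For $\phi=\phi_\circ$ we computed $R\equiv-1/2$. For general $\phi\in\mathfrak S(\epsilon_\circ)$, $R$ is given by a divided-difference expression in $\phi$ and $\phi'$, which one can write using an integral (Hermite–Genocchi type) formula whose value depends continuously and boundedly on the $\mathrm C^3$-data of $\phi$; a short estimate then shows $|R-(-1/2)|\lesssim\epsilon_\circ$, so that for $\epsilon_\circ<1/8$ (and with the implicit constants tracked) we get $|R|\ge 1/4$, say. Combining with the separation hypothesis yields $|\det(\mathrm N(\theta_1),\mathrm N(\theta_2),\mathrm N(\theta_3))|\gtrsim\delta$, as claimed.

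The main obstacle is exactly this factoring step: one must extract the Vandermonde factor from the perturbed determinant in a way that is robust under $\mathrm C^3$-perturbations, rather than bounding the determinant directly (which would fail since $\delta$ may be far smaller than $\epsilon_\circ$). Concretely, after row-reducing so that the bottom row is $(0,0,-1)$ one is reduced to a $2\times2$ determinant whose entries are first divided differences of $\phi'$ and of $\theta\mapsto\phi(\theta)-\theta\phi'(\theta)$ along the $\theta_i$; one more divided-difference step and the Hermite–Genocchi formula express the remaining factor $R$ as an average of second derivatives of $\phi'$ (equivalently, of $\phi''$ itself up to lower-order terms), which for $\phi\in\mathfrak S(\epsilon_\circ)$ lies in $[3/4,5/4]$ by the $\mathrm C^3$ bound — hence bounded away from $0$. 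I would carry out this divided-difference reduction explicitly to pin down $R$, then conclude by multiplying the lower bound on $|R|$ with the lower bound $|(\theta_1-\theta_2)(\theta_2-\theta_3)(\theta_3-\theta_1)|\gtrsim\delta$ coming from the assumed pairwise separation of the $J_i$.
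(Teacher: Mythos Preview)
Your approach is correct and essentially the same as the paper's: both factor out the Vandermonde product $\prod_{k<\ell}(\theta_k-\theta_\ell)$ and then bound the remaining cofactor away from zero using the $\mathrm C^3$-control $\phi''\in[7/8,9/8]$, $|\phi'''|\le 1/8$. The paper packages the factorization via the generalized mean value theorem of P\'olya--Szeg\H{o} (Part~V, Problem~95), writing $\det(\mathrm N(\theta_1),\mathrm N(\theta_2),\mathrm N(\theta_3))=\det(\mathrm N(u_1),\mathrm N'(u_2),\mathrm N''(u_3))\prod_{k<\ell}|\theta_k-\theta_\ell|$ for intermediate points $u_j$ and then computing the resulting $2\times2$ minor directly; this is exactly the content of your divided-difference/Hermite--Genocchi reduction.
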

\begin{proof} 
By the generalized mean value theorem (see \cite[Part V, Problem 95]{PolyaSzego}),
there are $u_j$ for $j=1,2,3$ such that $\min_{1\le i \le 3} \theta_i < u_j <\max_{1\le i \le3} \theta_i$
and
\[
\det (\mathrm N(\theta_1),\mathrm N(\theta_2),\mathrm N(\theta_3)) = \det (\mathrm N(u_1), \mathrm N'(u_2), \mathrm N''(u_3)) \prod_{1 \le k<\ell \le3}
|\theta_k-\theta_\ell|.
\]
From \eqref{normal} we have 
\begin{align*}
\det (\mathrm N(u_1), \mathrm N'(u_2), \mathrm N''(u_3))=  \det
\begin{pmatrix}
 u_2 \phi''(u_2) & \phi''(u_3) + u_3 \phi^{(3)}(u_3)\\
\phi''(u_2) & \phi^{(3)}(u_3)
\end{pmatrix}.
\end{align*}
Since $\phi \in \mathfrak S(\epsilon_\circ)$ and $\epsilon_\circ \in (0,1/8)$, $7/8\le |\phi''(\theta)| \le 9/8 $ and $|\phi^{(3)}(\theta)| \le 1/8$ for $\theta \in [-1,1]$. Thus, it  follows that $|\det(\mathrm N(u_1),\mathrm N'(u_2),\mathrm N''(u_3))|\ge 27/8^2$. 
So we obtain the desired bound since $\prod_{1 \le k <\ell \le 3}|\theta_k-\theta_\ell| \gtrsim \delta$.
\end{proof}

We are now  ready to prove Proposition \ref{trg}.

\begin{proof}[Proof of Proposition \ref{trg}] 
We begin by making a primary decomposition by which we reduce the matter
 to obtaining estimate for $T_\lambda^{\phi}$, $\phi \in \mathfrak S(\epsilon_\circ)$ relative to $\omega \in \Omega(\alpha).$ 

Let $\epsilon_\circ\in (0, 1/8)$. Following the argument in the proof of Lemma \ref{para}, 
 we  can fix a small positive dyadic number  $h_\ast<h_\circ(\epsilon_\circ)$ such that 
$(\psi_{\mathsmaller{0}})_{\theta, h_\ast}\in \mathfrak S(\epsilon_\circ)$ for $\theta \in [-1,1]$.
Let $J_j=[(j-1)h_\ast, (j+1)h_\ast]$, $j \in \mathbb Z$ such that $|j| \le h_*^{-1}+2$.
Then $\sum_{j} \varrho_{J_j}^{}=1$ on $[-1,1]$.
Denoting $
\widehat{ g_j}(\eta,\rho)  = \widehat g(\eta,\rho) \varrho^{}_{ J_j}(\rho/\eta)$, 
we get
\[
\|T_\lambda^{\psi_{\mathsmaller{0}}} g\|_{L^q(\mathbb R^2 \times I ;\omega)} \le 
\sum_j \| T_\lambda^{\psi_{\mathsmaller{0}}} g_j \|_{L^q(\mathbb R^2 \times I ;\omega)} .
\]

Let $\theta_j= jh_*$. 
As before (cf. \eqref{T-rescale}), the change of variable $(\eta,\rho ) \rightarrow L_{\theta_j,{h_\ast}}(\eta,\rho)$ gives 
\begin{align}
 \|T_\lambda^{\psi_{\mathsmaller{0}}} g_j \|_{L^q(\mathbb R^2\times I;\omega)}^q  
&  \le C h_\ast^{2\alpha - \kappa(\alpha)  } 
\big\|  T_{\psi_{\mathsmaller 0}''(\theta_j) h^2_\ast\lambda}^{(\psi_{\mathsmaller{0}})_{\theta_j,{h_\ast}}} (g_j)_{\theta_j,{h_\ast}} \big\|_{L^q(\mathbb R^2\times I ;\omega_{\theta_j,{h_\ast}})}^q.
\end{align}
Note $\| (g_j)_{\theta_j,{h_\ast}}\|_p=h_*^{-3/p}\|g\|_p$ and $\omega_{\theta_j,{h_\ast}} \in \Omega(\alpha)$.
Since $|\psi_{\mathsmaller 0}''(\theta_\circ)-1|\le \epsilon_\circ$ and $h_\ast$ is a fixed constant depending only on $\epsilon_\circ$, 
in order to prove Proposition \ref{trg} it suffices to show 
\begin{equation}\label{TR-est2}
\| T_\lambda^{\phi} g \|_{L^q(\mathbb R^2\times I; \omega )} \le C \lambda^{\gamma} \|g\|_{p}
\end{equation}
for some $\gamma<1/2$ provided that $\phi \in \mathfrak S(\epsilon_\circ)$ and $\omega \in \Omega(\alpha).$ 

Since $1<\alpha\le 2$, we have $\frac{\alpha-1}{q} - \frac3 p +1 >0$ from the hypothesis. So we can choose $\gamma <\frac12$ such that 
\Be
\label{gamma}
 \frac{ \alpha-1 }{q} -\frac3p  +  2\gamma >0. 
\Ee
For such $\gamma$ and for $\lambda'\ge 1$ we set
\begin{equation*} 
 \mathfrak Q (\lambda')  =\mathfrak Q  (\lambda',p,q,\alpha) := \sup_{1\le\lambda \le \lambda'} {\lambda}^{- \gamma  }  Q({\lambda}), 
 \end{equation*}
 where  $Q({\lambda})$ is defined  by \eqref{QR}. The estimate \eqref{TR-est2} follows if we show  $\mathfrak Q (\lambda') \le C $.

Let  $h_\circ$  be a small positive number so that  Lemma \ref{para} and Lemma \ref{Lscale} hold. 
Let $K_1,$ $K_2$ be positive dyadic numbers such that $0< K_2 \ll K_1\le h_\circ $. We also set $K_0=1$ as before.  
Using the decomposition \eqref{divideU}, we have
\begin{equation}\label{eq2} 
\begin{aligned}
	\|T_\lambda^\phi g \|_{L^q(\mathbb R^2\times I;\omega)} 
\le   \mathrm S_1+\mathrm S_2+\mathrm S_3, 
\end{aligned}
 \end{equation} 
 where 
 \begin{align*}
 \mathrm S_i&=  C K_{i-1}^{-2} \big\| \max_{J^i\in \mathfrak J^i} |T_\lambda^\phi  g^{}_{J^i} | \big\|_{L^q(\mathbb R^2\times I;\omega)}, \quad i=1,2
 \\
 \mathrm S_3&= C K_2^{-4} \Big\| \max_{(J^2_1,J^2_2,J^2_3) \in \mathfrak J^2(K_2)
 } \prod_{i=1}^3|T_\lambda^\phi  g^{}_{J^2_i}   |^{\frac 13}   \Big\|_{L^q(\mathbb R^2\times I;\omega)} .
 \end{align*}

Note that 
$\mathrm S_i^q  \le  C^q K_{i-1}^{-2q}
\sum_{J^i \in \mathfrak J^i} \| T_\lambda^\phi g^{}_{J^i}\|_{L^q(\mathbb R^2\times I;\omega)}^q$. 
By Lemma \ref{Lscale} with $\kappa(\alpha) = \alpha+1$, 
for $i=1,2$ 
we have 
\begin{align*}
\mathrm S_i^q 
&   \le C' K_{i-1}^{-2q} \big( K_i^{\frac{\alpha -  1 }q - \frac 3p  }
\sup_{\theta \in [-1,1]}Q( \phi''(\theta)K_i^2 \lambda) \big)^q  \sum_{J^i \in \mathfrak J^i}\| g_{J^i}^{} \|_p^q.
\end{align*} 
By the embedding $\ell^p \subset \ell^q$ ($p\le q$), it follows that $ \sum_{J^i \in \mathfrak J^i}\|g^{}_{J^i}\|_p^q  \le \| g \|_p^q$.
Here, we use the estimate $
\big( \sum_{J^i \in \mathfrak J^i} \| g_{J^i}^{} \|_p^p\big)^{1/p} \lesssim \|g\|_p
$ for $2 \le p \le \infty$, which can be shown by interpolation between trivial $L^\infty$--$\ell^\infty L^\infty$ estimate and $L^2$--$\ell^2L^2$ estimate via Plancherel's theorem.

By Lemma \ref{lem:transv},    the transversality condition \eqref{transv} with $\delta=(K_2)^3$ holds if $(J^2_1,J^2_2,J^2_3) \in \mathfrak J^2(K_2)$.  
By Corollary \ref{globtri22},  we have $\mathrm S_3  \le  CK_2^{-C}  \lambda^{\gamma }   \| g  \|_p$ for
$\lambda \gg K_2^{-3}$ and $p,q$ satisfying \eqref{tri-pq}.  
For the case of  $\lambda \lesssim  K_2^{-3}$, we have $\mathrm S_3  \le C K_2^{-C} \lambda^\gamma   \| g  \|_p$ for some constant $C>0$ from the easy estimate (e.g. \eqref{claim01}) and H\"older's inequality.  
So we get
\[ \mathrm S_3  \le  C K_2^{-C}  \lambda^{\gamma }  \| g  \|_p . \]

Combining all the estimates for $\mathrm S_1$, $\mathrm S_2$, and $\mathrm S_3$, which hold uniformly  for $\phi \in \mathfrak S(\epsilon_\circ)$ and $\omega \in \Omega(\alpha)$, we  obtain
\begin{align} 
\label{qqq}
Q(\lambda)
&\le 
 \sum_{i=1}^2 C K_{i-1}^{-2} K_i^{\frac{\alpha -  1 }q - \frac 3p  }
\sup_{\theta \in [-1,1]} Q( \phi''(\theta)K_i^2 \lambda)  
 +C   K_2^{-C}   \lambda^{\gamma}  .
\end{align}
Since $|\phi''(\theta)-1|\le\epsilon_\circ$, note that $  (\phi''(\theta)K_i^2 \lambda)^{-\gamma}   Q( \phi''(\theta)K_i^2 \lambda)   \le \mathfrak Q(\lambda')$ if $\lambda' \ge 2^{-1}K_i^2 \lambda\ge 1$.
Otherwise, i.e., if $1\le  \lambda\le 2K_i^{-2}$, we have $Q( \phi''(\theta)K_i^2 \lambda)\le  CK_i^{-C}$ for some $C>0$.  
Thus, we have $  (\phi''(\theta)K_i^2 \lambda)^{-\gamma}   Q( \phi''(\theta)K_i^2 \lambda)  
\le \mathfrak Q(\lambda')+  CK_i^{-C}  $.

We now multiply $\lambda^{-\gamma} $ to both sides of \eqref{qqq}.  Then, using  the above observation we  obtain 
\begin{align*}
\lambda^{-  \gamma } Q(\lambda)
  \le   \sum_{i=1}^2 C K_{i-1}^{-2} K_i^{\frac{  \alpha -1 }q -\frac 3p   + 2\gamma}  
\mathfrak Q (\lambda')
+    C  K_2^{-C} 
\end{align*}
for $\lambda\le \lambda'$. 
Taking supremum over $\lambda \le \lambda'$, we get
\[\mathfrak Q  (\lambda')     \le  \sum_{i=1}^2 CK_{i-1}^{-2} K_i^{\frac{  \alpha -1 }q -\frac 3p +   2\gamma } 
\mathfrak Q (\lambda')
+    C  K_2^{-C}.  \]
Since \eqref{gamma} holds,   we  can successively choose dyadic numbers $K_1, K_2$  such that 
\[ CK_{i-1}^{-2} K_i^{\frac{  \alpha -1 }q -\frac 3p  + 2\gamma } \le \,3^{-1}, \quad i=1,2.\]
Therefore, we obtain $\mathfrak Q  (\lambda')  \le \frac 23 \mathfrak Q  (\lambda')+ C  K_2^{-C}$, which clearly gives  $\mathfrak Q  (\lambda')  \le  C  K_2^{-C}$  for $\lambda'\ge 1$
and $p,q$ satisfying \eqref{tri-pq} and $\frac{\alpha-1}{q} - \frac 3p + 1>0$, i.e., \eqref{pq}.
This completes the proof. 
\end{proof}

\subsection{Weighted local smoothing estimate with sharp regularity}
In this section, modifying the proof of Proposition \ref{trg}, we show  \eqref{TR-est} with $\gamma > \frac12 +\frac1p -\frac{\alpha}{q}$, which implies Theorem \ref{LS-est}. 
We also discuss sharpness of the exponent $\gamma$. 

\begin{prop}\label{trg22}
Let  $0 < \alpha \le 3$  and $\omega \in \Omega(\alpha)$. 
If  
$\frac1q \le \min\big( \frac1p , ~\frac1{3p}+\frac16,~ \frac2{3p'} \big)$ and $ \frac1p +    \frac{\kappa(\alpha)}{q}  \le 1 $,  
then \eqref{TR-est} holds for $\gamma>\frac12+\frac1p -\frac\alpha q$.
\end{prop}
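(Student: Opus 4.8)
The plan is to follow the same induction-on-scale scheme used for Proposition \ref{trg}, but now tracking the sharp regularity exponent $\gamma(p,q) = \tfrac12 + \tfrac1p - \tfrac\alpha q$ rather than merely aiming for $\gamma < 1/2$. First I would reduce, exactly as in the proof of Proposition \ref{trg}, from the operator $e^{it\sqrt{-\Delta}}P_\lambda$ (i.e.\ \eqref{TRg}) to the normalized model operator $T_\lambda^\phi$ with $\phi \in \mathfrak S(\epsilon_\circ)$ and weight $\omega \in \Omega(\alpha)$, via the change of variables that turns $\psi_0$ into a member of $\mathfrak S(\epsilon_\circ)$; this incurs only a fixed $h_\ast$-dependent constant. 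Then the goal becomes the bound $Q(\lambda) \lesssim_\epsilon \lambda^{\gamma(p,q)+\epsilon}$ for the induction quantity $Q(\lambda)$ of \eqref{QR}.

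Next I would run the decomposition \eqref{divideU} into the two "single-cap" terms $\mathrm S_1, \mathrm S_2$ and the transverse trilinear term $\mathrm S_3$. For $\mathrm S_1, \mathrm S_2$ I apply the rescaling Lemma \ref{Lscale}, which produces the gain factor $h^{\frac{2\alpha-\kappa(\alpha)}{q}-\frac3p}$ times $Q(\phi''(\theta_\circ)h^2\lambda)$, together with the $\ell^p \subset \ell^q$ square-function bound $\big(\sum_{J^i}\|g_{J^i}\|_p^p\big)^{1/p} \lesssim \|g\|_p$ valid for $2 \le p$. For $\mathrm S_3$ I use Lemma \ref{lem:transv} to verify the transversality hypothesis \eqref{transv} with $\delta = K_2^3$, and then invoke the sharp weighted trilinear estimate, Proposition \ref{w-globtri} combined with Lemma \ref{globtri} (i.e.\ \eqref{w-tri} with $\gamma = \gamma_0 + \tfrac{3-\alpha}{q}$ and $\gamma_0 > \tfrac12 + \tfrac1p - \tfrac3q$, which holds under the hypothesis $\tfrac1q \le \min(\tfrac1p, \tfrac1{3p}+\tfrac16, \tfrac2{3p'})$); this gives $\mathrm S_3 \lesssim K_2^{-C}\lambda^{\gamma(p,q)}\|g\|_p$, where crucially $\gamma_0 + \tfrac{3-\alpha}{q} = \tfrac12 + \tfrac1p - \tfrac\alpha q$ matches the target exponent when $\kappa(\alpha)=3$, and for $\kappa(\alpha) \in \{\alpha+1, 2\alpha\}$ one checks the same exponent is still $\ge$ the target by a Hölder/inclusion step using $\tfrac1p + \tfrac{\kappa(\alpha)}{q} \le 1$. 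Assembling, I obtain the recursive inequality
\begin{equation*}
Q(\lambda) \le \sum_{i=1}^2 C K_{i-1}^{-2} K_i^{\frac{2\alpha-\kappa(\alpha)}{q}-\frac3p}\, \sup_{\theta}Q(\phi''(\theta)K_i^2\lambda) + C K_2^{-C}\lambda^{\gamma(p,q)}.
\end{equation*}
Introducing $\mathfrak Q(\lambda') = \sup_{1\le\lambda\le\lambda'}\lambda^{-\gamma(p,q)}Q(\lambda)$, dividing by $\lambda^{\gamma(p,q)}$, and using $\lambda \mapsto K_i^2\lambda$ scales the right side by $K_i^{2\gamma(p,q)}$, so the self-improving step needs $C K_{i-1}^{-2}K_i^{\frac{2\alpha-\kappa(\alpha)}{q}-\frac3p+2\gamma(p,q)} \le \tfrac13$; the exponent on $K_i$ is $\frac{2\alpha-\kappa(\alpha)}{q}-\frac3p + 1 + \frac2p - \frac{2\alpha}{q} = 1 - \frac1p - \frac{\kappa(\alpha)}{q} \ge 0$, exactly the hypothesis, so $K_1$ then $K_2$ can be chosen small enough to close the iteration and get $\mathfrak Q(\lambda') \le C K_2^{-C}$, hence \eqref{TR-est} with $\gamma > \gamma(p,q)$.

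The main obstacle is the bookkeeping around the case $\kappa(\alpha) < 3$: when $2\alpha - \kappa(\alpha) < 3$ the rescaled weight bound \eqref{omega2} is governed by $\min\{3,\alpha+1,2\alpha\}$, so the cap-rescaling gain degrades, and one must verify that the trilinear term still delivers precisely the sharp exponent $\gamma(p,q)$ rather than something larger — this is where the constraint $\tfrac1p + \tfrac{\kappa(\alpha)}{q} \le 1$ (as opposed to the $\tfrac3q$ constraint for $\alpha>2$) enters, and it must be reconciled with the interpolation vertices used in Proposition \ref{localtri}. A secondary point requiring care is that the square-function splitting of $\|g\|_p$ over caps only holds for $p \ge 2$; for $1 \le p < 2$ in the stated range one either argues the relevant subrange forces $p \ge 2$ or handles small $\lambda$ by the crude bound \eqref{q-lambda} and Hölder, as in the proof of Proposition \ref{trg}. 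I would then note that Theorem \ref{LS-est} follows from Proposition \ref{trg22} by Littlewood--Paley decomposition and summing the dyadic pieces, since $\gamma > \gamma(p,q)$ is an open condition.
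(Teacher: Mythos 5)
Your proposal follows the paper's own proof of Proposition \ref{trg22} essentially verbatim: the same decomposition \eqref{divideU}, Lemma \ref{Lscale} for the single-cap terms, Proposition \ref{localtri} combined with Proposition \ref{w-globtri} (in place of Corollary \ref{globtri22}) for the transverse trilinear term, and the same induction quantity $\mathfrak Q$ closed via the hypothesis $\frac1p+\frac{\kappa(\alpha)}q\le 1$.

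Two points as written need repair, though both are fixed by the paper's own bookkeeping. First, you must run the induction at a \emph{fixed} $\gamma$ strictly larger than $\frac12+\frac1p-\frac\alpha q$: the trilinear input only yields $\lambda^{\gamma}$ for such $\gamma$ (never $\lambda^{\frac12+\frac1p-\frac\alpha q}$ itself), and, more importantly, at the admissible boundary $\frac1p+\frac{\kappa(\alpha)}q=1$ your $K_i$-exponent $1-\frac1p-\frac{\kappa(\alpha)}q$ vanishes, so $CK_{i-1}^{-2}K_i^{0}\le\frac13$ cannot be achieved by shrinking $K_i$; with $\gamma$ slightly above the target the exponent becomes $2\big(\gamma-\frac12-\frac1p+\frac\alpha q\big)+1-\frac1p-\frac{\kappa(\alpha)}q>0$ and the successive choice of $K_1,K_2$ closes the iteration. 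Second, your aside that for $\kappa(\alpha)\in\{\alpha+1,2\alpha\}$ an extra H\"older/inclusion step is needed for the trilinear term is misplaced: $\kappa(\alpha)$ never enters that bound, which gives $\gamma_0+\frac{3-\alpha}q>\frac12+\frac1p-\frac\alpha q$ for every $0<\alpha\le3$; $\kappa(\alpha)$ appears only through the rescaled weight in Lemma \ref{Lscale} and through the closing condition $\frac1p+\frac{\kappa(\alpha)}q\le1$. Finally, on the cap-splitting for $p<2$: the hypotheses do \emph{not} force $p\ge2$ (small $\alpha$ admits $p<2$), and the small-$\lambda$ crude bound does not address the issue; the correct remedy is that for $1\le p\le 2$ interpolation of the $L^1$--$\ell^\infty L^1$ and $L^2$--$\ell^2L^2$ bounds gives $\big(\sum_J\|g_J\|_p^{p'}\big)^{1/p'}\lesssim\|g\|_p$, while $\frac1q\le\frac2{3p'}$ forces $q>p'$, so $\sum_J\|g_J\|_p^{q}\lesssim\|g\|_p^{q}$ still holds.
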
 

\begin{proof} 
 The proof is essentially identical to that of Proposition \ref{trg}, so we shall be brief. 
 The main difference is that we use Proposition \ref{localtri} and  Proposition \ref{w-globtri} instead of Corollary \ref{globtri22}.
For a fixed  $\gamma > \frac12+\frac1p-\frac\alpha q$, 
we need to show  $  \mathfrak Q  (\lambda') \le C $.

As in the proof of Proposition \ref{trg}, we use the decomposition \eqref{divideU}. 
Since $\sum_{J^i}\|g_{J^i}^{}\|_p^q \lesssim \|g\|_p^q$, by Lemma \ref{Lscale}  we have  
\begin{align*}
\big\| \max_{J^i\in \mathfrak J^i}   |T_\lambda  g^{}_{J^i} | \big\|_{L^q(\mathbb R^2\times I;\omega)} 
  \le C K_i^{\frac{2\alpha - \kappa(\alpha) }q - \frac 3p  }
Q( K_i^2 \lambda)  \| g \|_{p} .
\end{align*}
By Proposition \ref{localtri} and Proposition \ref{w-globtri}  we also have
\begin{align*}
 \Big\| \max_{ (J^2_1,J^2_2,J^2_3) \in \mathfrak J^2(K_2)} \prod_{i=1}^3 \big|T_\lambda  g^{}_{J^2_i} \big|^{\frac 13}   \Big\|_{L^q(\mathbb R^2\times I;\omega)} 
 \le CK_2^{-C}  \lambda^{\gamma}   \| g  \|_{p} 
\end{align*}
for $\gamma >\frac12 +\frac1p -\frac{\alpha}{q}$ if $\frac 1q \le \min(\frac 1p, \frac{1}{3p}+\frac 16, \frac 2{3p'})$ and $1\le p\le \infty$.

By following the same lines of argument  as in the proof of Proposition \ref{trg}
we obtain 
\[
 \mathfrak Q  (\lambda')     \le C  \sum_{i=1}^2 K_{i-1}^{-2} K_i^{\frac{2\alpha - \kappa(\alpha) }q - \frac 3p   + 2\gamma  } 
 \mathfrak Q  (\lambda')
+    C K_2^{-C}
\]
for $\lambda'\ge 1$. 
We take $\gamma$ sufficiently close to $\frac12+\frac1p -\frac\alpha q$ such that $ \frac{2\alpha - \kappa(\alpha) }q - \frac 3p   + 2\gamma   > 1- \frac 1p - \frac{  \kappa(\alpha) }q \ge 0$.
Then, we can choose $K_1, K_2$ such that $CK_{i-1}^{-2} K_i^{\frac{2\alpha - \kappa(\alpha) }q - \frac 3p   + 2\gamma  } \le \frac13$ for $i=1,2$. 
Therefore, 
 $ \mathfrak Q  (\lambda') \le C$ holds provided that
$\frac1q \le \min(\frac1p , \frac1{3p}+\frac16,\frac{2}{3p'})$ and $1 - \frac1p  -   \frac{\kappa(\alpha)}{q}  \ge 0 $, as desired.
\end{proof}

We discuss sharpness of the regularity exponent in Theorem \ref{LS-est}.  
We show \eqref{LS-AAA} holds only if 
\begin{align} 
\label{gamma-con}\gamma & \ge  \frac 12+ \frac 1 p -\frac\alpha q,
\\
\label{gamma-con2}
\gamma  &\ge \frac{3}{2p} + \frac{\kappa(\alpha)-2\alpha}{2q}. 
\end{align}
Note the  two lower bounds on $\gamma $ coincide if $\frac1 p + \frac{\kappa(\alpha)}{q} =1$.
So,  \eqref{LS-AAA} 
	holds for $\gamma >\frac12 +\frac1p -\frac{\alpha}{q}$ only if 
$\frac1p +    \frac{\kappa(\alpha)}{q}  \le 1 	$.

\begin{proof}[Proof of \eqref{gamma-con}]
For $\lambda \gg1$, we consider $f$ given by $\widehat f(\xi)=e^{-i|\xi|} \zeta(\lambda^{-1}\xi)$ for a Schwartz function $\zeta$ supported on $1\le|\xi| \le 2$.
Using the polar coordinates  (see \eqref{ker-wave}), we have
\[
|f(x)| \lesssim \lambda^{\frac 3 2}  (1+\lambda|	|x|-1|)^{-N}
\]
for any $N\ge1$. Since $\supp \widehat f$ is  supported in $\{ |\xi| \sim \lambda \}$,  
we see
$
\|f\|_{L_\gamma^p} 
\lesssim \lambda^\gamma \lambda^{\frac32-\frac1p}.
$
Note that $|e^{it\sqrt{-\Delta}} f(x) |\gtrsim \lambda^{2}$ if $|x|, |t-1| \le c\lambda^{-1}$ for a sufficiently small $c>0$.
If we take  
\[d\nu(z) =\chi_{\mathbb B^3(0,3)}(z) |z|^{\alpha-3} dz, \quad z=(x,t) \in \mathbb R^3,\] then
it is easy to see $\langle\nu\rangle_\alpha\le C$ for some $C$.  So,
 \eqref{LS-AAA} gives $\lambda^{2 - \frac \alpha q} \lesssim \lambda^\gamma \lambda^{\frac 32-\frac1p}$. Taking $\lambda\to \infty$, we get   \eqref{gamma-con}. 
\end{proof}

\begin{proof}[Proof of \eqref{gamma-con2}]

We use a Knapp type example. 
Let us take $f$ such that $\widehat f(\xi) = \lambda^{-\frac 32} \zeta(\lambda^{-1}\xi_1, \lambda^{-\frac 12}\xi_2)$.
Then $|e^{it\sqrt{-\Delta}} f(x)| \gtrsim 1$  for $(x,t) \in R_\lambda$, where 
\[R_\lambda :=\{ (x,t): |x_1-t| \le c\lambda^{-1},~ |x_2| \le c \lambda^{-\frac 12},~  t  \in I\}\]
for a sufficiently small $c>0$.
We consider
\[ d\nu (x,t)= \lambda^{\frac{3-2\alpha+\kappa(\alpha)}{2}} \chi_{R_\lambda}(x,t)\,dxdt.\] It is easy to show  $\langle \nu \rangle_\alpha \lesssim 1$. 
Indeed, if $0<r \le \lambda^{-1}$, we have $\nu(\mathbb B^3(z,r)) \lesssim \lambda^{\frac{3-2\alpha+\kappa(\alpha)}2}r^3 \le r^\alpha$
since $r^{\frac 32} \le r^{\frac {\kappa(\alpha)}2}$.  One can show  $\nu(\mathbb B^3(z,r))\lesssim r^\alpha$  for the cases $\lambda^{-1} <r \le \lambda^{-\frac 12}$ and $\lambda^{-\frac 12} <r \le1$ similarly.
Thus, the inequality  \eqref{LS-AAA} gives $ \lambda^{\frac{-2\alpha+\kappa(\alpha)}{2q}}
\lesssim \lambda^{ \gamma } \lambda^{-\frac{3}{2p}}$, which implies 
\eqref{gamma-con2}. 
\end{proof}

\section{Proofs of Theorem \ref{thm:circular-avr}, Theorem \ref{smoothing}, and Theorem \ref{LS-est} }\label{Sec:3}
In this section, we first prove Theorem \ref{smoothing} and Theorem \ref{LS-est}, which we deduce from Proposition \ref{trg} and  Proposition \ref{trg22}, respectively.
Combining  Littlewood-Paley decomposition and Theorem \ref{smoothing}, we  prove Theorem \ref{thm:circular-avr}, and then Corollary \ref{thm:circular}.  We also discuss results for the spherical average in higher dimensions.

\subsection{Proof of Theorem \ref{smoothing} and Theorem \ref{LS-est}}
By Littlewood-Paley decomposition, it suffices to consider the operator $e^{it\sqrt{-\Delta}}P_\lambda f$ with $\lambda\ge 1$.
We first observe that \eqref{TRg} is a simple consequence of the estimate \eqref{TR-est} via a change of variable.
Let $U(x,t)= (x_1-t,x_2,t)$ where $x = (x_1,x_2)$.    Setting $\xi= (\eta,\rho)$, we write   $|\xi| -\eta = \eta\sqrt{1 + \rho^2/\eta^2} -\eta = \eta \psi_{\mathsmaller{0}}(\rho/ \eta) $. 
Then,
\[ U(x,t)\cdot(\xi,|\xi|) = (x,t) \cdot (\eta,\rho, \eta \psi_{\mathsmaller{0}}(\rho/ \eta)).\]

By decomposition into finite angular sectors and rotation, we may assume $\widehat f$ is supported in a small conic neighborhood of $e_1$. 
Changing variables $(x,t) \to U(x,t)$, we have
\begin{equation}\label{eq}
\| e^{it\sqrt{-\Delta}} P_\lambda f \|_{L^q(\mathbb R^2\times I;\omega )} 
 \le C \| T_\lambda^{\psi_{\mathsmaller{0}}} f \|_{L^q(\mathbb R^2 \times I; \omega')} ,
\end{equation}
where $\omega'(x,t):=\omega( U(x,t))$ and $\widehat f$ is supported on
{$4^{-1}\lambda \le \eta \le 4\lambda$} 
and $|\rho/\eta| \le 1$.
Since $U$ is an {invertible linear map}, it is clear that $C^{-1}\omega' \in \Omega(\alpha)$ for some $C>0$.
Therefore, \eqref{TRg} holds for the same $\gamma$ and $p,q$ as in Proposition \ref{trg}.

The following lemma shows that estimates  relative to $\alpha$-dimensional measures can be deduced from weighted estimates.
See \cite[Section 3]{Mattila} for example.

\begin{lem}\label{convert}
Let $0<\alpha \le 3$ and $\nu \in \mathfrak C^{3}(\alpha)$. Set  $\varphi_\lambda =\lambda^3\varphi(\lambda \cdot)$ for  a Schwartz function $\varphi  $ such that
$\widehat{\varphi}=1$ on $ \mathbb B^3 (0,4) $ and $\supp \widehat{\varphi} \subset \mathbb B^3(0,8) $. There is a constant $C_1=C_1(\nu)>0$ such that $C_1^{-1} |\varphi_\lambda| \ast \nu \in \Omega(\alpha)$.
\end{lem}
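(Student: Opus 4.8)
The plan is to reduce the claim that $C_1^{-1}|\varphi_\lambda|\ast\nu\in\Omega(\alpha)$ to verifying the defining inequality \eqref{omega}: for every $(z,r)\in\mathbb R^3\times\mathbb R_+$,
\[
\int_{\mathbb B^3(z,r)} \big(|\varphi_\lambda|\ast\nu\big)(y)\,dy \le C_1\, r^\alpha.
\]
By Fubini's theorem the left-hand side equals $\int_{\mathbb R^3}\Big(\int_{\mathbb B^3(z,r)}|\varphi_\lambda|(y-w)\,dy\Big)d\nu(w)$, so the whole matter comes down to bounding $\int_{\mathbb R^3} g_{z,r}(w)\,d\nu(w)$, where $g_{z,r}(w):=\int_{\mathbb B^3(z,r)}|\varphi_\lambda|(y-w)\,dy$. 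The point is that $g_{z,r}$ is essentially a smoothed-out, normalized indicator of $\mathbb B^3(z,r)$ at scale $\lambda^{-1}$: it is bounded by $C$, it is $\lesssim_N (1+\lambda\,\mathrm{dist}(w,\mathbb B^3(z,r)))^{-N}$ by the rapid decay of $\varphi$ (hence of $\varphi_\lambda$), and $\int g_{z,r}\lesssim \min(r^3,\,r^3)$ — more precisely $\int_{\mathbb R^3} g_{z,r}(w)\,dw = |\mathbb B^3(z,r)|\cdot\int|\varphi_\lambda| \lesssim r^3$.

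The key step is then a dyadic decomposition of the tail. Write $\mathbb R^3 = \bigcup_{\ell\ge 0} A_\ell$ where $A_0 = \mathbb B^3(z, r+\lambda^{-1})$ and $A_\ell = \mathbb B^3(z, r+2^\ell\lambda^{-1})\setminus \mathbb B^3(z,r+2^{\ell-1}\lambda^{-1})$ for $\ell\ge 1$. On $A_\ell$ with $\ell\ge 1$ we have $g_{z,r}(w)\lesssim_N 2^{-N\ell}$, while $\nu(A_\ell)\le \nu(\mathbb B^3(z,r+2^\ell\lambda^{-1}))\le \langle\nu\rangle_\alpha (r+2^\ell\lambda^{-1})^\alpha$ by \eqref{measure}. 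Splitting according to whether $2^\ell\lambda^{-1}\le r$ or $2^\ell\lambda^{-1}> r$, the contribution of the terms with $2^\ell\lambda^{-1}\le r$ is $\lesssim_N \sum_\ell 2^{-N\ell}\langle\nu\rangle_\alpha (2r)^\alpha \lesssim \langle\nu\rangle_\alpha r^\alpha$, and for the terms with $2^\ell\lambda^{-1}> r$ one bounds $(r+2^\ell\lambda^{-1})^\alpha\lesssim (2^{\ell+1}\lambda^{-1})^\alpha$ and absorbs the resulting geometric-type factor $2^{\alpha\ell}$ into $2^{-N\ell}$ by choosing $N>\alpha$; since $\lambda^{-1}\le r$ is not assumed, one uses instead that in this regime $2^\ell\lambda^{-1}> r$ forces the sum $\sum 2^{(\alpha-N)\ell}(2^\ell\lambda^{-1})^\alpha$ to telescope — but it is cleaner simply to note $(2^\ell\lambda^{-1})^\alpha\le (2^\ell\lambda^{-1})^\alpha$ and that the whole of $\mathbb R^3$ is covered, so $\int g_{z,r}\,d\nu \lesssim_N \langle\nu\rangle_\alpha\big(r^\alpha + \sum_{\ell:\,2^\ell\lambda^{-1}>r} 2^{-N\ell}(2^\ell\lambda^{-1})^\alpha\big)$. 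For $N>\alpha$ the remaining sum is $\lesssim (2^{\ell_0}\lambda^{-1})^\alpha 2^{-N\ell_0}$ at its smallest index $\ell_0$ with $2^{\ell_0}\lambda^{-1}>r$, which is $\lesssim r^\alpha$ because $2^{\ell_0-1}\lambda^{-1}\le r$. Collecting everything gives $\int_{\mathbb B^3(z,r)}(|\varphi_\lambda|\ast\nu)\,dy \le C\langle\nu\rangle_\alpha\, r^\alpha$ with $C$ depending only on $\varphi$ and $\alpha$, so we may take $C_1 = C\langle\nu\rangle_\alpha$, which depends only on $\nu$ (through $\langle\nu\rangle_\alpha$) as claimed.

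The only mild obstacle is bookkeeping: since the lemma does not presuppose any relation between $\lambda$ and $r$, one must handle both the regime $r\le\lambda^{-1}$ (where $g_{z,r}$ is concentrated on a ball larger than $\mathbb B^3(z,r)$ itself and the mass bound $\int g_{z,r}\lesssim r^3 \le r^\alpha\lambda^{\alpha-3}\cdot\lambda^{3-\alpha}$ must instead be replaced by the bound $r^3\lesssim (\lambda^{-1})^{3-\alpha} r^\alpha$, absorbed after noting $\nu$ of the relevant ball is $\lesssim\langle\nu\rangle_\alpha\lambda^{-\alpha}$ and $g_{z,r}\le C$) and the regime $r\gg\lambda^{-1}$ uniformly. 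In fact the cleanest route avoids this split entirely: bound $g_{z,r}(w)\le C\min\{1,\ (\lambda r)^3\}$ pointwise is \emph{not} needed — it suffices to use $0\le g_{z,r}\le C$ together with the tail decay and \eqref{measure}, exactly as in the proof of Lemma \ref{elementary}, and the dyadic sum above then produces $r^\alpha$ in all cases. I expect the writeup to mirror Lemma \ref{elementary} almost verbatim, with $\omega$ replaced by $\nu$ and the Lebesgue estimate $\int_{\mathbb B^3(y,\rho)}\omega\le\rho^\alpha$ replaced by \eqref{measure}.
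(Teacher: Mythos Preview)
Your overall approach---Fubini plus a dyadic decomposition exploiting the Schwartz decay of $\varphi$ and the condition \eqref{measure}---is the same as the paper's, and your treatment of the regime $r\gtrsim\lambda^{-1}$ is correct. But there is a real gap in the small-ball case $r\le\lambda^{-1}$, and your closing claim that ``the cleanest route avoids this split entirely'' using only $0\le g_{z,r}\le C$ is wrong. With $g_{z,r}\le C$ alone, the contribution of the core $A_0=\mathbb B^3(z,r+\lambda^{-1})$ is
\[
\int_{A_0}g_{z,r}\,d\nu \;\le\; C\,\nu\big(\mathbb B^3(z,r+\lambda^{-1})\big)\;\lesssim\; \langle\nu\rangle_\alpha\,\lambda^{-\alpha},
\]
which \emph{exceeds} the target $r^\alpha$ when $r<\lambda^{-1}$. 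The stray inequality $r^3\lesssim(\lambda^{-1})^{3-\alpha}r^\alpha$ you wrote down is exactly the right one, but it only becomes useful once paired with the sharper pointwise bound $g_{z,r}\le |\mathbb B^3(z,r)|\,\|\varphi_\lambda\|_\infty\lesssim (r\lambda)^3$, not with $g_{z,r}\le C$.

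The paper does make the split. For $r\le\lambda^{-1}$ it uses the sup-norm bound $\|\,|\varphi_\lambda|\ast\nu\,\|_\infty\lesssim\langle\nu\rangle_\alpha\lambda^{3-\alpha}$ (itself obtained from the same dyadic decay), giving
\[
\int_{\mathbb B^3(z_\circ,r)}|\varphi_\lambda|\ast\nu\;\lesssim\;\langle\nu\rangle_\alpha\,\lambda^{3-\alpha}r^3=(\lambda r)^{3-\alpha}\langle\nu\rangle_\alpha\, r^\alpha\;\le\;\langle\nu\rangle_\alpha\, r^\alpha.
\]
For $r>\lambda^{-1}$ it swaps the order of integration much as you do. So the fix is straightforward---use $g_{z,r}\lesssim(r\lambda)^3$ when $r\le\lambda^{-1}$---but the case split cannot be bypassed with the bound you actually stated.
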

\begin{proof}[Proof of Lemma \ref{convert}]
Let us define $  \nu_\lambda :=  |\varphi_\lambda|\ast \nu  $.
By rapid decay of $\varphi$,  we have
\begin{equation*} 
\nu_\lambda (z) \le 
C_N \lambda^3 \sum_{\ell\ge 0} 2^{-N\ell} \nu\big( \mathbb B^3(z ,2^\ell \lambda^{-1} )\big)    
\end{equation*}
for any $N>0$. It suffices to show $ \int_{\mathbb B^3(z_\circ ,r)} \nu_\lambda(y) dy 
\le   C_1 r^\alpha$ for a constant $C_1$.

Let us consider two cases $r \le \lambda^{-1}$ and $r >\lambda^{-1}$, separately.
If $r \le \lambda^{-1}$, using $\|\nu_\lambda \|_\infty \le  C \langle \nu \rangle_\alpha \lambda^{3-\alpha} $, we see $ \int_{\mathbb B^3(z_\circ ,r)} \nu_\lambda(z) dz 
\lesssim \langle \nu \rangle_\alpha \lambda^{3-\alpha}   r^3\le C_N'\langle \nu \rangle_\alpha r^\alpha$.   
To handle the case  $r>\lambda^{-1}$,  we use
\[
\int_{\mathbb B^3(z_\circ,r)} \nu\big( \mathbb B^3( z ,2^\ell \lambda^{-1} )\big)  dz \le \int_{\mathbb B^3(z_\circ,(2^\ell+1)r)} \int \chi_{\mathbb B^3(z', 2^\ell \lambda^{-1})}(z) dz d\nu(z').  \]
Taking $N$ sufficiently large, we get
\begin{align*}
\int_{\mathbb B^3(z_\circ,r)} \nu_\lambda (z) dz 
\le   C_N \lambda^3 \sum_{\ell\ge 0} 2^{-N\ell} 
(2^\ell \lambda^{-1})^3 (2^\ell+1)^\alpha r^\alpha  \le  C_N''  r^\alpha  . 
\end{align*}
Therefore, taking $C_1= \max\{  C_N',C_N''\}$, we  have $C_1^{-1}\nu_\lambda \in \Omega(\alpha)$. 
\end{proof}

We are ready to prove Theorem \ref{smoothing} and Theorem \ref{LS-est}.
\begin{proof}[Proof of Theorem \ref{smoothing} and Theorem \ref{LS-est}]
By  Littlewood-Paley decomposition, it is enough to show 
\begin{equation}\label{EE}
 \| e^{it\sqrt{-\Delta}} P_\lambda f \|_{L^q(  \mathbb R^2 \times I;\nu )}    \le C \lambda^{\gamma} \|f\|_{p}, \quad \lambda \ge 1 .
\end{equation}
Let  $\varphi$ be the Schwartz function given in Lemma \ref{convert}.
Since
$\mathcal F_{x,t}(e^{it\sqrt{-\Delta}} P_\lambda f) = \mathcal F_{x,t}(e^{it\sqrt{-\Delta}} P_\lambda f )  \widehat{\varphi_\lambda}$,
we have $|e^{it\sqrt{-\Delta}} P_\lambda f |^q \lesssim   |e^{it\sqrt{-\Delta}} P_\lambda f |^q  \ast |\varphi_\lambda|$ by H\"older's inequality.
Thus it follows that
\begin{equation}\label{trgg}
 \| e^{it\sqrt{-\Delta}} P_\lambda f \|_{L^q(  \mathbb R^2 \times I;\nu )}    \le C  \|e^{it\sqrt{-\Delta}} P_\lambda f \|_{L^q(\mathbb R^2 \times I;|\varphi_\lambda|\ast \nu )}  .
\end{equation}	
By  Lemma \ref{convert}, $C^{-1} |\varphi_\lambda| \ast \nu \in \Omega(\alpha)$ for some $C>0$. Since we are assuming  $\widehat f$ is supported in a small conic neighborhood of $e_1$,   using  \eqref{trgg} and \eqref{eq}, we see that the estimate \eqref{TR-est} implies \eqref{EE}
for the same $p,q$ and $\gamma$ as in  Proposition \ref{trg}  and Proposition \ref{trg22}.   
Therefore, Proposition \ref{trg}  and Proposition \ref{trg22}
give  Theorem \ref{smoothing} and Theorem \ref{LS-est}, respectively. 
\end{proof}

Next, we  show Theorem \ref{thm:circular-avr} and Corollary \ref{thm:circular}. 
 	
\subsection{Estimates for the circular average}
Recall the average $\mathcal A$ in $\mathbb R^d$, $d\ge 2$. 
Let us set   $P_0  = \sum_{j \le 0} P_{2^j} $. The kernel 
$\mathcal K(\cdot, t)$ of $f\to \mathcal AP_0f(\cdot, t)$ is rapidly decaying, i.e.,  $|\mathcal K(\cdot, t)|\le C(1+|x|)^{-N}$ for any $N$. 
By Schur's test, one can easily see   $\| \mathcal A P_0 f \|_{L^q(\mathbb R^2 \times I; d\nu)} \le C \|f\|_{p}$ for $1\le p \le q$. Therefore, 
it is sufficient to consider the contribution from   $\sum_{j\ge 1} \mathcal AP_{2^j}f$.

Let $d\sigma_t$ denote the normalized spherical measure. Then it is well known that
\Be
\label{asym}
\widehat{d\sigma_t}(\xi) = e^{it|\xi|} a_+(t\xi) +e^{-i t|\xi|} a_-(t\xi) ,
\Ee where $a_{\pm}$ are smooth functions satisfying $|\partial_\xi^m a_{\pm}(\xi)| \le C_m (1+|\xi|)^{-\frac{d-1}{2} - |m|}$ for any $m$.   
So, it is sufficient to consider $e^{it|\xi|} a_+(t\xi)$ since the contribution from $e^{-i t|\xi|} a_-(t\xi)$ can be handled similarly by reflection $t\to -t$. 
We set 
\[
\mathfrak A f (x,t)  =  \int e^{i (  x\cdot \xi + t |\xi|)} (1 + |\xi|^2)^{-\frac{d-1}{4}} \widehat f(\xi)d\xi    
\]
and 
\[ \mathfrak A_j f= \mathfrak A P_{2^j} f.\]
Since $t\in I$, to prove  the  estimate \eqref{nu-avr-d}
it suffices to show 
$
\| \sum_{j\ge 1} \mathfrak A_j  f \|_{L^q(\mathbb R^d\times I; d\nu)} \lesssim \langle \nu \rangle_\alpha^{1/q} \|f\|_p.
$
 Thus, the matter is reduced to showing 
\begin{equation}
\label{wave0}
\big\| \mathfrak A_j f \big\|_{L^q(\mathbb R^d\times I; d\nu)} \lesssim \langle \nu \rangle_\alpha^{\frac 1q} 2^{-\epsilon j } \|f\|_{p}, \quad j\ge1, 
\end{equation} 
for some $\epsilon>0$.

Let  $\mathbf K_j(\cdot, t)$ denote the kernel of $f\to \mathfrak A_jf(\cdot, t)$. It is easy to see 
\begin{align}\label{ker-wave}
|  \mathbf K_j (x,t)| \lesssim  2^{j}\big( 1+2^j\big| |x|-|t| \big| \big)^{-N} 
\end{align}
for any $N\ge1$ (see, for example,  \cite{Lee}). Using the estimate we obtain 
\begin{align} 
\label{00}
\| \mathfrak A_j f \|_{L^\infty(\mathbb R^d \times I;d\nu)}  &\lesssim  2^{j} \|f\|_1,   \\
\label{04}
\| \mathfrak A_j f \|_{L^1(\mathbb R^d \times I;d\nu)}  &\lesssim \langle \nu \rangle_\alpha \min \big\{ 2^j, 2^{(d+1-\alpha) j} \big\} \|f\|_1.
\end{align}
In fact,  \eqref{00} is clear from \eqref{ker-wave}. 
 To show \eqref{04}, we use the fact that $ \mathbf K_j $ is essentially supported in $O(2^{-j})$ neighborhood of the truncated cone with height $\sim 1$. Since 
the neighborhood  can be covered by $O(2^{dj})$ balls of radius $2^{-j}$, using \eqref{omega} we see $\| \mathbf K_j \|_{L^1(d\nu)} \lesssim   \langle \nu \rangle_\alpha 2^{(d+1-\alpha)j}$. 
Besides, we have $| \mathbf K_j |\lesssim 2^j(1+|x|)^{-N}$ since $t\in I$. So, $\|\mathbf K_j\|_{L^1(d\nu)}\lesssim \langle \nu \rangle_\alpha \min\{ 2^j, 2^{(d+1-\alpha)j} \}$. We get  \eqref{04} by Fubini's theorem.

\begin{proof}[Proof of Theorem \ref{thm:circular-avr}] 
As discussed above, we  need only to show
  \eqref{wave0}   
  for some $\epsilon>0$ if $p,q$ satisfy  \eqref{pq}.   When $1<\alpha \le 2$, the desired estimate \eqref{nu-avr-d} follows from Theorem \ref{smoothing} since  
\Be 
\label{wave}
\mathfrak A_jf=2^{-\frac j2} e^{it\sqrt{-\Delta}}P_{2^j}\widetilde f
\Ee for some $\widetilde f$ satisfying $\|\widetilde f\|_p \lesssim \|f\|_p$.

For $2<\alpha \le 3$, we use \eqref{00}, \eqref{04},  and 
   \begin{align} \label{m26}
\| \mathfrak A_jf \|_{L^r(\mathbb R^2\times I; d\nu)}  &\lesssim \langle \nu \rangle_\alpha^{\frac 1r} 2^{ (\frac{ 3-\alpha}r-\frac12+\epsilon) j} \|f\|_r, \quad 2\le r \le 4 ,~ \epsilon>0 .
\end{align}
To show \eqref{m26}, we use the estimate $\| \mathfrak A_j f \|_{L^r(\mathbb R^2 \times I)} \lesssim  2^{(-\frac12+\epsilon)j} \|f\|_{L^r}$ for $\epsilon > 0$ and $2\le r\le 4$, which follows from 
interpolation  between the estimates for $r=2$ and $r=4$.  Indeed, the first is clear from Plancherel's theorem  and \eqref{wave}. The second follows by the sharp local smoothing estimate \eqref{LS-AAA}
and \eqref{wave} combined with the globalization lemma (see Lemma \ref{globtri}). Then,  modifying the argument in the proof of Proposition \ref{w-globtri}, we get the estimate \eqref{m26}.

When $2<\alpha \le 3$, $\mathcal P_2(\alpha)$ is the closed quadrangle with vertices  $(0,0)$,
\[  \quad  \mathbf P_1:=\Big( \frac{\alpha}{\alpha+3}, \frac1{\alpha+3}\Big),  \quad\mathbf P_2:=   \Big( \frac{\alpha-1}\alpha, \frac1\alpha\Big),  \quad \mathbf P_3:= \Big( \frac1{4-\alpha}, \frac1{4-\alpha}\Big).    \]
Interpolating the estimates \eqref{04} and \eqref{m26} with $r=2$,  we have 
\[ 
\|  \mathfrak A_jf \|_{L^q(\mathbb R^2\times I; d\nu)}  \lesssim  \langle \nu \rangle_\alpha^{\frac 1q} 2^{(\frac{4-\alpha}q-1)j} \|f\|_q
\] 
for $1 \le q \le 2$. Since $\|  \mathfrak A_jf \|_{L^\infty(\mathbb R^2\times I; d\nu)}  \le  C \|f\|_\infty$, this gives \eqref{wave0} for $4-\alpha< {p=q} <\infty$. 
Therefore, to complete the proof,   by interpolation  it is enough to show \eqref{wave0}  for $(1/p,1/q)$   arbitrarily close to $\mathbf P_1$ and $\mathbf P_2$. 
Interpolating  \eqref{00} and \eqref{m26} yields 
\[ 
\| \mathfrak A_jf  \|_{L^q(\mathbb R^2\times I; d\nu)}  \lesssim  \langle \nu \rangle_\alpha^{\frac 1q} 2^{(\frac3{2p}+\frac{3-2\alpha}{2q}-\frac12+\epsilon )j} \|f\|_p
\] 
for $\epsilon>0$ if $1- \frac3q \le \frac 1p \le 1- \frac1q$ {and $p\le q$}.    A simple computation shows that this gives \eqref{wave0} for $(1/p,1/q)$ contained in 
the closed quadrangle with vertices  $(1/4, 1/4),  \mathbf P_1, \mathbf P_2, $ and $(1/2, 1/2)$ excluding the closed  line segment $[\mathbf P_1, \mathbf P_2]$. 
\end{proof}

Corollary \ref{thm:circular} is a direct consequence of Theorem \ref{thm:circular-avr} and the following lemma
combined with the globalization argument (e.g., Lemma \ref{globtri}).
For $0 < k \le d+1$, let $V$ be a $k$-dimensional subspace in $\mathbb R^{d+1}$.
We write $(x,t)=(z_1,z_2)$ where $z_1 \in V$, $z_2 \in V^\perp$.

\begin{lem}
\label{sup}
Let $V$ be a $k$-dimensional subspace in $\mathbb R^{d+1}$ for $k=1, \dots, d+1$.  
Let $\mu$ be an $\alpha$-dimensional measure on $V$  for some   $0<\alpha \le k$. 
If 
\eqref{nu-avr-d} holds with a uniform constant $C$, independent of $\nu \in \mathfrak C^{d+1}(\alpha)$,
then 
\[
\big\| \sup_{z_2\in V^\perp} | \chi_{\mathbb B^{d}(0,1)\times I}\mathcal Af(\cdot, z_2)| \big\|_{L^q(V, d\mu)} \le C \langle \mu \rangle_\alpha^{\frac 1q} \|f\|_p.
\]
\end{lem}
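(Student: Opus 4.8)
The plan is to deduce the maximal estimate from the fractal averaging estimate \eqref{nu-avr-d} by the classical Kolmogorov--Seliverstov--Plessner linearization combined with the Riesz representation theorem, much as indicated in the introduction. First I would reduce to a quantitative statement: it suffices to bound $\big\| \chi_{\mathbb B^d(0,1)\times I}\, \mathcal Af(\cdot, z_2(\cdot))\big\|_{L^q(V,d\mu)}$ uniformly over all measurable selections $z_1\mapsto z_2(z_1)\in V^\perp$ (and, to be safe, over a fixed compact range of $z_2$, which is harmless since we may assume $f$ has compactly supported Fourier transform by the usual limiting argument, or simply note that the kernel decay makes the supremum over all of $V^\perp$ comparable to the supremum over a bounded set). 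By a routine measurable-selection/monotone-convergence argument the genuine supremum is the increasing limit of such linearized quantities, so a uniform bound on the latter gives the former.

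Next I would dualize. Fix a nonnegative $g\in L^{q'}(V,d\mu)$ with $\|g\|_{L^{q'}(d\mu)}\le 1$ and a measurable selection $z_2(\cdot)$, and consider the linear functional
\[
f\longmapsto \int_V \chi_{\mathbb B^d(0,1)\times I}(z_1,z_2(z_1))\,\mathcal Af(z_1,z_2(z_1))\, g(z_1)\, d\mu(z_1).
\]
This equals $\int_{\mathbb R^{d+1}} \mathcal Af(x,t)\, d\nu(x,t)$, where $\nu$ is the push-forward of the measure $\chi_{\mathbb B^d(0,1)\times I}(z_1,z_2(z_1))\, g(z_1)\, d\mu(z_1)$ under the map $z_1\mapsto (z_1,z_2(z_1))$, viewed as a measure on $\mathbb R^{d+1}$ via the identification $V\times V^\perp = \mathbb R^{d+1}$. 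The key point is a ball-counting estimate: for any Euclidean ball $\mathbb B^{d+1}(z,\rho)\subset\mathbb R^{d+1}$, its preimage under $z_1\mapsto(z_1,z_2(z_1))$ is contained in $\{z_1\in V: |z_1 - P_V z|< \rho\}$, a $\rho$-ball in $V$; hence
\[
\nu(\mathbb B^{d+1}(z,\rho)) \le \int_{\mathbb B^k(P_Vz,\rho)\cap (\mathbb B^d\times I)} g\, d\mu
\le \mu\big(\mathbb B^k(P_Vz,\rho)\big)^{1/q}\,\|g\|_{L^{q'}(d\mu)}
\le \langle\mu\rangle_\alpha^{1/q}\,\rho^{\alpha/q},
\]
using Hölder on the measure space $(V,\mu)$ in the middle step. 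Wait — that gives $\rho^{\alpha/q}$, not $\rho^\alpha$, so $\nu$ is not $\alpha$-dimensional but $(\alpha/q)$-dimensional with constant controlled by $\langle\mu\rangle_\alpha^{1/q}$; but this is exactly what is needed, since applying \eqref{nu-avr-d} with the parameter $\alpha$ replaced by $\alpha/q$ and the exponent pair $(p,q)$ there replaced by $(p,1)$ (note $q\cdot 1 = q \le \alpha\cdot p$ becomes $1\le (\alpha/q)p$, and the other hypotheses degenerate favorably), or more cleanly, by using the scaling that makes the right choice of $q$-parameter, one recovers a bound of the form $C\langle\mu\rangle_\alpha^{1/q}\|f\|_p$. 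Actually the cleanest route is: apply \eqref{nu-avr-d} directly with exponent $q$ and the measure $\tilde\nu := \nu^{q}$-type object is not a measure, so instead observe $\int \mathcal Af\, d\nu \le \|\mathcal Af\|_{L^q(d\tilde\mu)}\|g\|_{L^{q'}(d\mu)}$ after writing $d\nu = g\, d\tilde\mu$ with $\tilde\mu$ the push-forward of $\chi_{\mathbb B^d\times I}d\mu$, which is $\alpha$-dimensional with $\langle\tilde\mu\rangle_\alpha\le \langle\mu\rangle_\alpha$; then \eqref{nu-avr-d} applied to $\tilde\mu$ gives $\le C\langle\mu\rangle_\alpha^{1/q}\|f\|_p\cdot 1$.

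So the main steps, in order, are: (1) reduce the supremum to linearized functionals via measurable selection and monotone convergence, together with a reduction to a bounded range of $z_2$ using kernel decay; (2) dualize each linearized functional against $L^{q'}(d\mu)$, pushing forward $\chi_{\mathbb B^d\times I}\,d\mu$ to an $\alpha$-dimensional measure $\tilde\mu$ on $\mathbb R^{d+1}$ with $\langle\tilde\mu\rangle_\alpha\le\langle\mu\rangle_\alpha$ (the ball-counting step, using that the graph map is $1$-Lipschitz onto its first-coordinate projection); (3) apply the hypothesis \eqref{nu-avr-d} to $\tilde\mu$ and use $\|g\|_{L^{q'}(d\mu)}\le 1$ to close the duality and undo the linearization. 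I expect the main obstacle to be purely bookkeeping: making the reduction in step (1) rigorous (measurability of the selection, justification that the supremum over $V^\perp$ can be taken over a compact set, and that the linearized bound transfers to the genuine maximal function), since the analytic heart of the matter is already contained in \eqref{nu-avr-d} and the ball-counting estimate is immediate from the Lipschitz property of the graph map.
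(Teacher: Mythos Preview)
Your approach is essentially the same as the paper's: linearize the supremum via a measurable selection $z_1\mapsto z_2(z_1)$, push $\mu$ forward along the graph map to obtain an $\alpha$-dimensional measure on $\mathbb R^{d+1}$ (using the trivial ball-counting $\nu(\mathbb B^{d+1}(z,\rho))\le \mu(\mathbb B^k(P_Vz,\rho))\le\langle\mu\rangle_\alpha\rho^\alpha$), and invoke \eqref{nu-avr-d}. The paper, however, is more direct than your final ``cleanest route'': there is no need to dualize against $g\in L^{q'}(d\mu)$ at all. Once $\nu$ denotes the push-forward of $d\mu$ under $z_1\mapsto(z_1,z_2(z_1))$, one has \emph{by definition of push-forward}
\[
\|\chi_{\mathbb B^d(0,1)\times I}\,\mathcal Af(\cdot,z_2(\cdot))\|_{L^q(V,d\mu)}=\|\chi_{\mathbb B^d(0,1)\times I}\,\mathcal Af\|_{L^q(\mathbb R^{d+1},d\nu)},
\]
and \eqref{nu-avr-d} applies immediately. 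Your detour through $g\,d\mu$ and the false start with an $(\alpha/q)$-dimensional measure are unnecessary. Also, your concern about reducing to a compact range of $z_2$ is moot: the cutoff $\chi_{\mathbb B^d(0,1)\times I}$ already forces $(z_1,z_2)$ into a bounded set, so the supremum is effectively over a compact set from the outset.
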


By Lemma \ref{sup}  and  the globalization argument, $L^p$--$L^q(d\mu)$ estimate for $f\to \sup_{t \in I} |\mathcal Af(\cdot,t)|$ can be deduced from 
\eqref{nu-avr-d} by taking $V=\mathbb R^d \times\{0\}$ and $\nu \in \mathfrak C^{d+1}(d)$.

\begin{proof}
By the Kolmogorov-Seliverstov-Plessner linearization it suffices to show 
\begin{equation}\label{suptR1} 
\|  \chi_{\mathbb B^d(0,1)\times I} \mathcal A f (\cdot\,, \mathbf z_2(\cdot))      \|_{L^q (d\mu)} \leq C \langle \mu \rangle_\alpha^{\frac1q}  \|f\|_{p} 
\end{equation} 
for any measurable function $\mathbf z_2: V \to V^\perp$ with $C$ independent of $\mathbf z_2$.  For any compactly supported continuous function $F$, we define a linear functional $\ell$ on $\mathrm C_c(\mathbb R^{d+1})$ by  
$ \ell(F) = \int F(z_1, \mathbf z_2(z_1)) d\mu(z_1)$.  
Then, by the Riesz representation theorem,  there is a unique Radon measure $\nu$ on $\mathbb R^{d+1}$ such that  
\[ \ell(F) = \int F(x,t) d\nu(x,t) = \int F(z_1, \mathbf z_2(z_1)) d\mu(z_1) .    \] 
Since  $\nu \big(\mathbb B^{d+1}((y_1,y_2),r) \big) \le  \int \chi_{\mathbb B^k(y_1,r)}(z_1)\,d\mu(z_1) \le \langle \mu\rangle_\alpha  r^\alpha$, the measure $\nu$ belongs to $\mathfrak C^{d+1}(\alpha )$ and  $\langle \nu \rangle_\alpha \le \langle \mu \rangle_\alpha$.
Therefore, $\|  \chi_{\mathbb B^d(0,1)\times I}\mathcal A f (\cdot\,, \mathbf z_2(\cdot))      \|_{L^q (d\mu)}   = \| \chi_{\mathbb B^d(0,1)\times I}\mathcal A f\|_{L^q (d\nu)}$, and  \eqref{suptR1} follows by \eqref{nu-avr-d}.
\end{proof}

\subsection{Estimate for the spherical average}
We close this section with results in higher dimensions.
As mentioned in Section \ref{Sec:1}, we can obtain some estimates for the spherical average from  
the fractal Strichartz estimates for the wave equation in $\mathbb R^{d+1}$.  
 
\begin{thm}[{\cite[Theorem 3.1]{CHL}}]\label{3d-frac}
Let $d\ge3$, $2\le q\le \infty$,  and $1<\alpha \le d+1$. Suppose  $\nu \in \mathfrak C^{d+1}(\alpha)$. 
Then, we have  
\begin{equation}\label{CHLop}
\| e^{it\sqrt{-\Delta}} f\|_{L^q(\mathbb R^d \times I;d\nu )}
\lesssim \langle\nu\rangle_\alpha^{\frac 1q}  \|f\|_{H^s}
\end{equation} 
holds  with  $s> s(\alpha,q,d)$, where  
\[
 s(\alpha,q,d)  = \begin{cases} 
\,  \max \big\{ \frac d2-\frac \alpha q , ~   \frac{d+1}4+\frac{1-\alpha}{2q}, ~     \frac{3d+1-2\alpha}8 \big\}, ~ \hfill   \quad 1 <\alpha \le d,\phantom{111} \\[1ex]
\,  \max \big\{ \frac d2-\frac \alpha q , ~   \frac{d+1}4+\frac{d+1-2\alpha}{2q}, ~     \frac{d+1- \alpha}2 \big\}, ~ \hfill  \quad  d <\alpha \le d+1.
\end{cases}
\]
\end{thm}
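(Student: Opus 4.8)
The plan is first to reduce, by a Littlewood--Paley decomposition and dyadic summation (using that the bound is claimed for every $s$ strictly above $s(\alpha,q,d)$, exactly as around \eqref{wave0}), to the frequency-localized estimate
\[
\| e^{it\sqrt{-\Delta}} P_\lambda f\|_{L^q(\mathbb R^d\times I; d\nu)}\le C\langle\nu\rangle_\alpha^{1/q}\lambda^{s}\|f\|_2,\qquad \lambda\ge1,\ s>s(\alpha,q,d).
\]
Since $e^{it\sqrt{-\Delta}}P_\lambda f$ is the Fourier extension operator of the truncated light cone in $\mathbb R^{d+1}$, its space--time Fourier transform is essentially supported in $\mathbb B^{d+1}(0,C\lambda)$ (after localizing $t$ to a slightly larger interval and peeling off a rapidly decaying tail, as in the proof of Proposition \ref{w-globtri}); so, via the mollification of Lemma \ref{convert} in $\mathbb R^{d+1}$, one may replace $d\nu$ by a bounded weight $\omega_\lambda$ obeying \eqref{omega} with $\|\omega_\lambda\|_\infty\lesssim\langle\nu\rangle_\alpha\lambda^{d+1-\alpha}$. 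The estimate then becomes a weighted local smoothing estimate, and $s(\alpha,q,d)$ is assembled as the upper envelope (in $1/q$) of a short list of base estimates.

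The base estimates I would use are the following. First, the trivial endpoint $\|e^{it\sqrt{-\Delta}}P_\lambda f\|_{L^\infty(\mathbb R^d\times I)}\lesssim\lambda^{d/2}\|f\|_2$ from Cauchy--Schwarz and Plancherel, which governs large $q$. Second, the diagonal Strichartz estimate for the wave equation, $\|e^{it\sqrt{-\Delta}}P_\lambda f\|_{L^q(\mathbb R^d\times I)}\lesssim\lambda^{d/2-(d+1)/q}\|f\|_2$ for $q\ge\frac{2(d+1)}{d-1}$; feeding this through Lemma \ref{elementary} (ambient dimension $d+1$), the gain $\lambda^{(d+1-\alpha)/q}$ cancels the $d+1$ and leaves $\lambda^{d/2-\alpha/q}$, which is precisely the first term of $s(\alpha,q,d)$. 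Third, the elementary $L^2$ bound $\int|e^{it\sqrt{-\Delta}}P_\lambda f|^2\,d\nu\lesssim\langle\nu\rangle_\alpha\lambda^{d+1-\alpha}\|f\|_2^2$, from the energy identity together with the mollification above; for $d<\alpha\le d+1$ its value $\lambda^{(d+1-\alpha)/2}$ at $q=2$ is the third term, and interpolating it with the Strichartz bound at $q=\frac{2(d+1)}{d-1}$ produces the middle term, so in this regime the theorem already follows from these three ingredients plus interpolation. Fourth, for $1<\alpha\le d$ the elementary $L^2$ bound is too lossy, and one needs a \emph{curvature-improved} fractal restriction estimate for the light cone, $\int|e^{it\sqrt{-\Delta}}P_\lambda f|^2\,d\nu\lesssim\langle\nu\rangle_\alpha\lambda^{2s(\alpha,2,d)}\|f\|_2^2$, together with its $L^q$ analogues for $q$ near $2$; interpolating these with the trivial $L^\infty$ and Strichartz-based bounds yields the middle and third terms.

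The crux is this curvature-improved estimate. I would attack it by $TT^*$: writing $\int|e^{it\sqrt{-\Delta}}P_\lambda f|^2\,d\nu=\langle f,(e^{it\sqrt{-\Delta}}P_\lambda)^{*}M_{d\nu}\,(e^{it\sqrt{-\Delta}}P_\lambda)f\rangle$ and using the expansion \eqref{asym} of $\widehat{d\sigma_t}$, the kernel is a superposition of spherical Fourier transforms whose $|\xi|^{-(d-1)/2}$ curvature decay, combined with \eqref{omega} controlling the incidences between $\lambda^{-1}$-thickened light cones and $\supp\nu$, produces the improved powers $\tfrac{d+2-\alpha}{4}$ and $\tfrac{3d+1-2\alpha}{8}$; alternatively one runs a square-function/decoupling argument for the cone (Bourgain--Demeter, \cite{BD}) and passes its output through Lemma \ref{elementary}, and in the lowest dimensions the multilinear restriction estimate (Theorem \ref{BCT}) sharpens it, giving the conjecturally optimal exponents when $d=3$. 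The dichotomy $1<\alpha\le d$ versus $d<\alpha\le d+1$ is dictated by the geometry: the light cone is a $d$-dimensional surface, so the Frostman count for its $\lambda^{-1}$-neighbourhood saturates near $\alpha=d$ and behaves differently beyond. The main obstacle is exactly this step --- pushing the $L^2$ bound below the elementary exponent $\lambda^{d+1-\alpha}$ forces one to marry the $d-1$ nonvanishing curvatures of the cone to the fractal regularity of $\nu$, and since the available local-smoothing inputs are not sharp for $d\ge3$, the reachable $s(\alpha,q,d)$ is only conjecturally optimal (attaining the known sharp bounds when $d=3$).
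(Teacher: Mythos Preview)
This theorem is not proved in the present paper: it is quoted verbatim from \cite[Theorem 3.1]{CHL}, and the only remark the paper adds is that the local estimate there extends to the global form \eqref{CHLop} via the localization argument of Lemma \ref{globtri}. So there is no ``paper's own proof'' to compare against; the result is used as a black box in the proof of Theorem \ref{maxthm-in-general}.

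That said, your outline is broadly on the right track as a reconstruction of how such a theorem is obtained. The reduction to a frequency-localized estimate, the replacement of $d\nu$ by a mollified weight via (the $(d{+}1)$-dimensional analogue of) Lemma \ref{convert}, and the interpolation between the trivial $L^\infty$ endpoint, the Strichartz input pushed through Lemma \ref{elementary}, and an $L^2(d\nu)$ bound are exactly the ingredients one expects; in particular your verification that the middle and third terms meet at $q=2$ and at $q=\tfrac{2(d+1)}{d-1}$ in the range $d<\alpha\le d+1$ is correct, and that regime is indeed handled by these elementary pieces alone.

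The one place your sketch is genuinely incomplete is the ``curvature-improved'' $L^2(d\nu)$ estimate for $1<\alpha\le d$. You describe a $TT^\ast$ argument and mention decoupling and multilinear restriction as alternatives, but you do not actually produce the exponent $(3d+1-2\alpha)/8$, nor the middle term $(d+1)/4+(1-\alpha)/(2q)$, from any of these. Note that these two expressions only coincide at $q=2$ when $d=3$, so a simple interpolation between an $L^2$ endpoint and the Strichartz point does not generate the stated envelope in general dimension; one needs a family of $L^2\to L^q(d\nu)$ bounds (as in \cite{CHL}) derived from the bilinear cone restriction estimates, not merely a single improved $L^2$ bound. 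In short: the architecture is right, but the engine room --- the precise fractal $L^2\to L^q$ estimate for the cone that yields the second and third terms when $1<\alpha\le d$ --- is asserted rather than argued, and this is exactly the nontrivial content of \cite{CHL}.
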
 

In fact, the estimate  in \cite[Theorem 3.1]{CHL} is a local one. However, it can be easily extended  to a global estimate \eqref{CHLop} by the argument in Lemma \ref{globtri}.
The regularity exponents in Theorem \ref{3d-frac} are sharp when $d=3$ or $\alpha \in [1, \frac{d-1}2] \cup [d,d+1]$ in higher dimensions.

Using
\eqref{00} and \eqref{04}, we obtain an analogue of Theorem \ref{thm:circular-avr} for the spherical averages.
To state the result, let us set
\[ 
{p_d}(\alpha)= 
\begin{cases} 
\, \frac{\alpha+1}{\alpha} ,~  \hfill \quad 1<\alpha \le \frac{d-1}{2},  \\[.8ex]  
\, \frac{d+3+2\alpha}{d-1+ 2\alpha} ,~  \hfill \quad \frac{d-1}{2}<\alpha \le d.
\end{cases}
\]

\begin{thm}\label{maxthm-in-general}
Let $d\ge3$ and $1 < \alpha \le d+1$. Suppose $\nu \in \mathfrak C^{d+1}(\alpha)$. 
\begin{enumerate}[label={\rm{(}$\roman*$\rm{)}}, leftmargin=.7cm]
\item If $1<\alpha \le d$, then
\eqref{nu-avr-d} holds for $p,q$ satisfying $p > p_d(\alpha)$, 
\begin{equation}\label{pqx}
\begin{aligned}
q^{-1} &\le p^{-1} <\alpha q^{-1},\\   
(d+1)\inv p & < d-1 + (\alpha-1) \inv q. 
\end{aligned}
\end{equation} 
\item  
If $d<\alpha \le d+1$, then \eqref{nu-avr-d} holds for $p,q$ satisfying 
\begin{align*}
q^{-1} &\le p^{-1} <\alpha q^{-1}, \\
(d+1)p^{-1} &<d-1+(2\alpha-d-1)q^{-1},\\
dp^{-1} &<d-1+(\alpha-d)q^{-1}.
\end{align*}
\end{enumerate}
\end{thm}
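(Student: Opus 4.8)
The plan is to reduce Theorem~\ref{maxthm-in-general} to the decaying estimate \eqref{wave0} for the frequency-localized wave pieces $\mathfrak A_j f = \mathfrak A P_{2^j} f$ in $\mathbb R^{d+1}$, exactly as in the proof of Theorem~\ref{thm:circular-avr}. By \eqref{asym} and the Littlewood--Paley decomposition (the piece $\mathcal A P_0$ having a rapidly decaying kernel) it suffices to establish $\|\mathfrak A_j f\|_{L^q(\mathbb R^d\times I;d\nu)} \lesssim \langle\nu\rangle_\alpha^{1/q} 2^{-\epsilon j}\|f\|_p$ for some $\epsilon>0$, so the whole argument is an interpolation between a handful of vertex estimates for $\mathfrak A_j$. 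The ``trivial'' endpoints are already recorded: \eqref{00} gives the $L^1\to L^\infty(d\nu)$ bound with loss $2^j$, and \eqref{04} gives the $L^1\to L^1(d\nu)$ bound with gain $\min\{2^j,2^{(d+1-\alpha)j}\}$ coming from the fact that $\mathbf K_j$ is supported in a $2^{-j}$-neighborhood of a truncated cone, which meets $\nu$ in measure $\lesssim \langle\nu\rangle_\alpha 2^{(d+1-\alpha)j}$.

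The new ingredient for $d\ge3$ is an $L^2\to L^q(d\nu)$ smoothing estimate for $\mathfrak A_j$ obtained from the fractal Strichartz estimate, Theorem~\ref{3d-frac}. Using the identity $\mathfrak A_j f = 2^{-j/2} e^{it\sqrt{-\Delta}}P_{2^j}\widetilde f$ with $\|\widetilde f\|_2\lesssim\|f\|_2$ (cf.\ \eqref{wave}) and the fact that $\|P_{2^j}\widetilde f\|_{H^s}\sim 2^{js}\|P_{2^j}\widetilde f\|_2$, Theorem~\ref{3d-frac} yields
\[
\|\mathfrak A_j f\|_{L^q(\mathbb R^d\times I;d\nu)} \lesssim \langle\nu\rangle_\alpha^{1/q} 2^{(s(\alpha,q,d)-\tfrac12+\epsilon)j}\|f\|_2,\qquad \epsilon>0,
\]
after globalizing the local estimate by the argument of Lemma~\ref{globtri}. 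The decay $s(\alpha,q,d)<1/2$ holds precisely on an interval of $q$ determined by the three terms in the max defining $s$: the term $\frac d2-\frac\alpha q$ forces $\alpha q^{-1}>p^{-1}$ at the relevant interpolation points, the middle term produces the line $(d+1)p^{-1}<d-1+(\alpha-1)q^{-1}$ (resp.\ $(d+1)p^{-1}<d-1+(2\alpha-d-1)q^{-1}$ when $d<\alpha\le d+1$), and the last term gives the constraint $p>p_d(\alpha)$ (resp.\ the extra line $dp^{-1}<d-1+(\alpha-d)q^{-1}$). So one interpolates the $L^2$ estimate against \eqref{00}, against \eqref{04}, and for $p=q$ against the $L^\infty\to L^\infty(d\nu)$ bound, and reads off that \eqref{wave0} holds throughout the claimed region. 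The decomposition of the region into subregions handled by each pair of endpoint estimates is the bulk of the bookkeeping, but it is routine once the vertex estimates are in place.

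A point requiring a little care is the passage from the $H^s$ bound in Theorem~\ref{3d-frac} to the frequency-localized bound with the explicit $2^{js}$ factor, and then the globalization: Theorem~\ref{3d-frac} as stated is a global estimate (or becomes one after the Lemma~\ref{globtri}-type argument), but when $q<2$ one cannot simply invoke it and must instead first prove an $L^2\to L^2(d\nu)$ and an $L^2\to L^q(d\nu)$ estimate in the range $q\ge2$ and then interpolate with \eqref{04} to descend below $q=2$; this is the same maneuver as in the proof of Theorem~\ref{thm:circular-avr} where \eqref{m26} was combined with \eqref{04}. The main obstacle is therefore not any single estimate but verifying that the max in $s(\alpha,q,d)$ is genuinely $<1/2$ on the stated polytope and that no additional constraints are hidden in the interpolation; in particular one should check the corner cases $\alpha$ near $1$, near $(d-1)/2$, near $d$, and near $d+1$ separately, since $p_d(\alpha)$ and the defining inequalities change form there. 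Once that is checked, combining the pieces via analytic interpolation and summing in $j$ completes the proof of both $(i)$ and $(ii)$.
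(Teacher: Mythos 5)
Your overall route is exactly the paper's: reduce to the dyadic decay estimate \eqref{wave0}, feed in the endpoint bounds \eqref{00}, \eqref{04} and the trivial $L^\infty$--$L^\infty(d\nu)$ estimate, obtain an $L^2$--$L^q(d\nu)$ bound for $\mathfrak A_j$ from the fractal Strichartz estimate of Theorem \ref{3d-frac} (globalized as in Lemma \ref{globtri}), and interpolate; the descent below $q=2$ via \eqref{04} is also the paper's maneuver. However, there is a concrete error in the key step. In $\mathbb R^d$ the multiplier of $\mathfrak A$ is $(1+|\xi|^2)^{-\frac{d-1}{4}}$, so on frequencies $\sim 2^j$ one has $\mathfrak A_j f=2^{-\frac{d-1}{2}j}e^{it\sqrt{-\Delta}}P_{2^j}\widetilde f$, not $2^{-\frac j2}e^{it\sqrt{-\Delta}}P_{2^j}\widetilde f$ (the latter is the $d=2$ identity \eqref{wave} you cite). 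Consequently the $L^2$--$L^q(d\nu)$ bound carries the factor $2^{(s-\frac{d-1}{2})j}$, and the relevant smallness criterion at the $L^2$ endpoint is $s(\alpha,q,d)<\frac{d-1}{2}$, not $s(\alpha,q,d)<\frac12$ as you assert. This is not a cosmetic issue: for $d=3$ and $1<\alpha\le 3$ one has $s(\alpha,q,3)\ge\frac{10-2\alpha}{8}\ge\frac12$ for every $q$, so your criterion ``$s(\alpha,q,d)<1/2$ on an interval of $q$'' is never satisfied, and the bookkeeping by which you match the three terms of $s(\alpha,q,d)$ to the three boundary lines of the theorem collapses as written.

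The repair is short and brings you back to the paper's argument: with the correct factor, the quantity that must be negative is the \emph{interpolated} exponent $\tilde s$ (obtained by interpolating $2^{(s-\frac{d-1}{2})j}$ at $p=2$ with \eqref{00} and the trivial $L^\infty$ bound on the range $\frac1q\le\frac1p\le 1-\frac1q$, and with \eqref{00}, \eqref{04} and the $q=2$ case $s>s(\alpha,2,d)$ on the range $1-\frac1q\le\frac1p\le\frac1q$), and the conditions $\tilde s<0$ are precisely $(d+1)\inv p<d-1+(\alpha-1)\inv q$ (resp. $(d+1)\inv p<d-1+(2\alpha-d-1)\inv q$ for $d<\alpha\le d+1$), $\inv p<\alpha\inv q$, $p>p_d(\alpha)$, and, when $d<\alpha\le d+1$, $d\inv p<d-1+(\alpha-d)\inv q$ via the additional interpolation of \eqref{04} with the $q=2$ estimate at $s>(d+1-\alpha)/2$. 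Note also that for $1<\alpha\le\frac{d-1}{2}$ the constraint $p>p_d(\alpha)=\frac{\alpha+1}{\alpha}$ does not come from the third term of $s(\alpha,q,d)$ alone but from the interpolation with \eqref{04} and \eqref{00} (the $\frac{\alpha+1}{p}-\alpha$ term), so your attribution of each boundary line to a single term of $s$ should be stated at the level of the interpolated exponents rather than as a condition on $s(\alpha,q,d)$ itself.
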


When $d<\alpha\le d+1$, the results  are sharp (see Proposition \ref{prop:nec}). 
We also expect  the result to be sharp when  $d=3$, but  the necessity of the condition $p >p_3(\alpha)$ for $1<\alpha\le 3$ is verified only for $\alpha=2,3$ (see \ref{nnece4} and \ref{nnece4-1} in Proposition \ref{prop:nec}).  
Using the recent result due to Harris \cite{Harris2}, which improves the exponent $s$ in \eqref{CHLop} for $q =2$ and $\frac{d+1}{2} <\alpha < d$, 
$d\ge 4$, one can obtain a slightly better result.

\begin{proof}[Proof of Theorem \ref{maxthm-in-general}]
As in the proof of Theorem \ref{thm:circular-avr},  
it suffices to show  \eqref{wave0} for some $\epsilon>0$.   
Note $
\mathfrak A_jf=2^{-\frac {d-1}2j} e^{it\sqrt{-\Delta}}P_{2^j}\widetilde f
$
for some $\widetilde f$ satisfying $\|\widetilde f\|_p \lesssim \|f\|_p$ (cf. \eqref{wave}). 
By \eqref{CHLop}, we have 
\begin{equation}\label{2q-est}
\| \mathfrak A_j f \|_{L^{q}(\mathbb R^d \times I;d\nu)}   \lesssim \langle \nu \rangle_\alpha^{\frac 1{q}} 2^{(s- \frac{d-1}2)j} \|f\|_2 ,
\quad q \ge2
\end{equation}
for  $s >s(\alpha,q,d)$.

We first consider the case $1<\alpha \le d$.
We interpolate  \eqref{2q-est} and  \eqref{00}, and then  \eqref{2q-est} and  the trivial estimate  $\| \mathfrak A_j f \|_{L^{\infty}(\mathbb R^d \times I;d\nu)} \lesssim  \|f\|_\infty$. 
Consequently,  for  $  p, q $ satisfying  $\inv q \le \inv p \le 1-\inv q$  we have
\begin{equation}\label{pqpq}
\| \mathfrak A_j f \|_{L^{q}(\mathbb R^d \times I;d\nu)}   \lesssim \langle \nu \rangle_\alpha^{\frac 1{q}} 2^{\tilde s j} \|f\|_p
\end{equation}
if 
\[
\tilde s > \max  \Big\{ \frac 1p-\frac \alpha q , ~\,   \frac{d+1}{2p}+\frac{1-\alpha}{2q} - \frac{d-1}{2}, ~\,    \frac{d+3+2\alpha}{4p} - \frac{d-1+2\alpha}{4} \Big\} .
\]
Therefore,  \eqref{wave0} holds for some $\epsilon>0$ if $\tilde s <0$ i.e., $p,q$ satisfy \eqref{pqx},  $p > (d+3+2\alpha)/(d-1+2\alpha)$, and $\inv p \le 1 -\inv q$.

On the other hand, if $p,q$ satisfy  $1 -\inv q \le \inv p \le \inv q$, then we have  \eqref{pqpq} for 
\[
\tilde s > \max \Big\{ \frac{\alpha+1}{p} - \alpha, ~  \,  \frac{d+3+2\alpha}{4p} - \frac{d-1+2\alpha}{4} \Big\} , 
\]
which is a consequence of interpolation between \eqref{00}, \eqref{04}, and \eqref{2q-est} with $q=2$ and $s>s(\alpha,2,d) =  \max \big\{ \frac {d-\alpha}{2} , ~\frac{3d+1-2\alpha}8 \big\}$.
So, \eqref{wave0} holds for $p,q$ satisfying $1 -\inv q \le \inv p \le \inv q$ and $p>p_d(\alpha)$. 
Combining the results for the cases  $\inv q \le \inv p \le 1-\inv q$  and  $1 -\inv q \le \inv p \le \inv q$,  we have \eqref{wave0} for $ p,q $ satisfying \eqref{pqx} and $p>p_d(\alpha)$. 
This proves the first part of Theorem \ref{maxthm-in-general} (when $1<\alpha \le d$).

We next consider the case $d<\alpha \le d+1$.
We need to show \eqref{wave0} holds for $(1/p,1/q)$ contained in  the union of the open  quadrangle  with vertices $(0,0)$, 
\begin{equation*}
\scalebox{1}{$  \mathbf Q_1:=\Big( \frac{(d-1)\alpha}{(d-1)\alpha+(d+1)}, \frac{d-1}{(d-1)\alpha+(d+1)}\Big),~  \ \mathbf Q_2:=   \Big( \frac{\alpha-1}\alpha, \frac1\alpha\Big),~  \ \mathbf Q_3:= \Big( \frac{d-1}{2d-\alpha}, \frac{d-1}{2d-\alpha}\Big), $   }
\end{equation*} 
and the line segment $[(0,0), \mathbf Q_3)$. 
Interpolating \eqref{2q-est}, \eqref{00}, and the trivial $L^\infty$--$L^\infty(\mathbb R^d \times I;d\nu)$ estimate, we obtain \eqref{pqpq}  for 
$q^{-1} \le p^{-1} \le 1-q^{-1}$ if 
\[
\tilde s > \max  \Big\{ \frac 1p-\frac \alpha q , ~\,   \frac{d+1}{2p}+\frac{d+1-2\alpha}{2q} - \frac{d-1}{2}, ~\,    \frac{\alpha}{p} - (\alpha-1) \Big\} .
\]
Thus,  \eqref{wave0} holds for some $\epsilon>0$ if $\tilde s <0$ i.e.,  $(\inv p,\inv q)$ is contained in  the interior of the quadrangle with vertices $(0,0)$, $\mathbf Q_1$, $\mathbf Q_2$, and $(1/2,1/2)$. 

Finally, by interpolation we  need only to obtain \eqref{wave0} for $(1/p,1/q)\in [(1/2,1/2), \mathbf Q_3)$. 
To see this, we interpolate two estimates  \eqref{04} and \eqref{2q-est} with $q=2$ and $s > (d+1-\alpha)/2$ to obtain \eqref{pqpq} for $1\le p=q\le 2$ and $\tilde s > \frac{2d-\alpha}{p} -(d-1)$. 
Thus,  \eqref{wave0} holds for $(\frac 1p, \frac 1q) \in [(1/2,1/2), \mathbf Q_3)$, as desired.
\end{proof}

Similarly as in Corollary \ref{thm:circular}, 
$L^p$-$L^q(d\mu)$ estimate for the spherical maximal function holds for the range of $p, q$ as in Theorem \ref{maxthm-in-general}.

\begin{cor}\label{3dm}
Let $d\ge3$, $1<\alpha \le d$ and $\mu \in \mathfrak C^{d}(\alpha)$. For $p,q$ satisfying $p > p_d(\alpha)$ and \eqref{pqx}, we have
\begin{align}\label{sup-high}
\big\|   \sup_{t \in I} |\mathcal A  f (\cdot,t)| \big\|_{L^q(d\mu)}   \le C \langle \mu\rangle_\alpha^{\frac 1q} \|f \|_{L^p(\mathbb R^d)} .
\end{align}
\end{cor}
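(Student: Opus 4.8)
The plan is to deduce Corollary \ref{3dm} from Theorem \ref{maxthm-in-general} in essentially the same way that Corollary \ref{thm:circular} is deduced from Theorem \ref{thm:circular-avr}: via the linearization/duality mechanism packaged in Lemma \ref{sup}, together with the globalization argument used in Lemma \ref{globtri}. First I would observe that $\mathfrak Mf = \sup_{t\in I}|\mathcal Af(\cdot,t)|$ only involves $t$ in the compact interval $I$, so it suffices to bound $\|\sup_{t\in I}|\chi_{\mathbb B^d(0,1)\times I}\mathcal Af(\cdot,t)|\|_{L^q(d\mu)}$ on a ball and then sum over a finitely overlapping cover of $\mathbb R^d$ by unit balls, using the rapid spatial decay of the kernel $\mathbf K_j$ (cf. \eqref{ker-wave}) exactly as in Lemma \ref{globtri} to pass from the local estimate to the global one; the loss in this summation is absorbed by the $\ell^p\subset\ell^q$ embedding since $p\le q$ on the range \eqref{pqx}.

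For the local estimate, I would apply Lemma \ref{sup} with $d$ replaced by the ambient dimension so that $V=\mathbb R^d\times\{0\}\subset\mathbb R^{d+1}$ is the $k=d$ dimensional subspace, $V^\perp$ is the $t$-axis, and $\mu\in\mathfrak C^d(\alpha)$ plays the role of the $\alpha$-dimensional measure on $V$ (legitimate since $1<\alpha\le d=k$). Lemma \ref{sup} then reduces $\|\sup_{t\in V^\perp}|\chi_{\mathbb B^d(0,1)\times I}\mathcal Af(\cdot,t)|\|_{L^q(V,d\mu)}\le C\langle\mu\rangle_\alpha^{1/q}\|f\|_p$ to the estimate \eqref{nu-avr-d} holding with a uniform constant over all $\nu\in\mathfrak C^{d+1}(\alpha)$. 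The key point, already flagged in the text, is that the Radon measure $\nu$ produced by the Kolmogorov--Seliverstov--Plessner linearization and the Riesz representation theorem from $\mu$ and the linearizing map $\mathbf z_2$ is automatically $\alpha$-dimensional with $\langle\nu\rangle_\alpha\le\langle\mu\rangle_\alpha$, so no new dimensional bookkeeping is needed. Thus the conclusion follows provided \eqref{nu-avr-d} holds for $\nu\in\mathfrak C^{d+1}(\alpha)$ on the stated $(p,q)$ range, which is precisely part $(i)$ of Theorem \ref{maxthm-in-general} for $1<\alpha\le d$ and $p>p_d(\alpha)$.

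The only mild subtlety — and the step I would be most careful about — is the matching of cutoffs between the maximal function and the averaging estimate: Lemma \ref{sup} delivers control of $\chi_{\mathbb B^d(0,1)\times I}\mathcal Af$ with the spatial truncation built in, whereas Theorem \ref{maxthm-in-general} gives \eqref{nu-avr-d} for $\mathcal Af$ without the $\chi_{\mathbb B^d(0,1)}$ factor; this is harmless because restricting the measure $\nu$ to $\mathbb B^d(0,1)\times I$ only decreases $\langle\nu\rangle_\alpha$, and conversely the globalization step reconstructs the full $L^q(\mathbb R^d,d\mu)$ norm from the unit-ball pieces. Assembling these observations, one obtains \eqref{sup-high} for all $p,q$ with $p>p_d(\alpha)$ satisfying \eqref{pqx}, as claimed; the argument is genuinely a corollary, with no new oscillatory-integral or multilinear input beyond what is already in Theorem \ref{maxthm-in-general}, Lemma \ref{sup}, and Lemma \ref{globtri}.
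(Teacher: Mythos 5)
Your proposal is correct and follows the paper's intended route: Corollary \ref{3dm} is obtained exactly as Corollary \ref{thm:circular}, by feeding Theorem \ref{maxthm-in-general}~$(i)$ into the linearization Lemma \ref{sup} (with $V=\mathbb R^d\times\{0\}$, $k=d$) and then removing the spatial cutoff via the globalization argument of Lemma \ref{globtri}. Your extra remark on matching the $\chi_{\mathbb B^d(0,1)\times I}$ truncation is a correct and harmless elaboration of what the paper leaves implicit.
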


\section{Necessary conditions on  $p,q$ for \eqref{nu-avr-d}} 
\label{Sec:4}

In this section, we obtain necessary conditions on $p,q,\alpha$ for \eqref{nu-avr-d} to hold.  
Obviously, it is enough to consider the local estimate 
\Be\label{nu-avr-loc} 
\| \mathcal Af \|_{L^q ( \mathbb B^d(0,1) \times I ; d\nu)} \le C\langle\nu\rangle_\alpha^{\frac1q} \|f \|_{L^p (\mathbb R^d)}. 
\Ee 
This is done by taking specific $\alpha$ dimensional measures and particular functions. 
A modification of these examples also gives   necessary conditions for the maximal estimate \eqref{sup-high} (also see \eqref{est-circ}) when $1<\alpha \le d$.

\begin{prop}\label{prop:nec}
Let $d \ge 2$ and $1<\alpha \le d+1$. 
Suppose \eqref{nu-avr-loc} holds for  $\nu \in \mathfrak C^{d+1}(\alpha)$.  Then, 
\begin{enumerate}[leftmargin=20pt, align=left, label={\rm{(}$\roman*$\rm{)} }]
\item \label{nnece1} $\qquad \quad \ \inv p \le \alpha \inv q$,
\\
\item \label{nnece1-1} $(d+1)\inv p \le \begin{cases} \, d-1+ (\alpha-1) \inv q, ~\hfill \quad 1<\alpha \le d, \phantom{111} \\ \,  d-1+  (2\alpha-d-1)\inv q , ~ \hfill \quad d<\alpha \le d+1 , \end{cases}$ 
\\[4pt]
\item \label{nnece1-2} $\qquad d\inv p  \le  \begin{cases} \, d-1   , ~ \hfill \quad  1<\alpha \le d , \phantom{111} \\ \,  d-1 +  (\alpha-d)\inv q , ~ \hfill \quad  d<\alpha \le d+1 . \end{cases}$ 
\end{enumerate}
Furthermore, if $d\ge3$ and $k <\alpha \le k+1$ for an integer $k \in [1, d-1]$, then the following are necessary for \eqref{nu-avr-loc} to hold: 
\begin{enumerate}[resume*] 
\item \label{nnece4} 
$\begin{aligned} 
\hphantom{g}
\quad (d+k) p^{-1}  &\le  d+k-2   + (\alpha-k)  q^{-1},
\\  p^{-1}  &\le \frac{d+k-1}{d+k+1},
\end{aligned}$
 \\[10pt]
\item \label{nnece4-1} 
$\begin{aligned} \qquad (k+1) p^{-1}  &\le   k  + (\alpha-k) q^{-1}, \\  
p^{-1}  &\le \frac{k+1}{k+2}.
\end{aligned}$
\end{enumerate}
When  $d=2$ and  $1<\alpha \le 3$, \eqref{nu-avr-loc} holds only if 
\begin{enumerate}[resume*]  
\item \label{nnece5} $p \ge 4-\alpha$. 
\end{enumerate}
\end{prop}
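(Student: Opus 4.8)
\textbf{Proof proposal for \ref{nnece5}.} The plan is to obtain $p\ge 4-\alpha$ as a short algebraic consequence of the necessary conditions \ref{nnece1-1} and \ref{nnece1-2} established above, combined with the elementary restriction $p\le q$ (equivalently $q^{-1}\le p^{-1}$), which holds because $\mathcal A$ is translation invariant in $x$ (cf. Section~\ref{Sec:1}); no new test function or measure is needed for this item.

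If $1<\alpha\le 2$, I would invoke \ref{nnece1-1} with $d=2$, namely $3p^{-1}\le 1+(\alpha-1)q^{-1}$; since $\alpha-1\ge 0$ and $q^{-1}\le p^{-1}$ this gives $3p^{-1}\le 1+(\alpha-1)p^{-1}$, i.e. $(4-\alpha)p^{-1}\le 1$, hence $p\ge 4-\alpha$. If $2<\alpha\le 3$, then $d=2<\alpha\le d+1$ and I would instead use \ref{nnece1-2}, namely $2p^{-1}\le 1+(\alpha-2)q^{-1}$; again $\alpha-2\ge 0$ and $q^{-1}\le p^{-1}$ yield $2p^{-1}\le 1+(\alpha-2)p^{-1}$, i.e. $(4-\alpha)p^{-1}\le 1$, hence $p\ge 4-\alpha$. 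In either range this is exactly the assertion, so \eqref{nu-avr-loc} forces $p\ge 4-\alpha$.

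The real content therefore lies in the earlier constructions of \ref{nnece1-1} and \ref{nnece1-2}; the reduction above is the only genuinely new step and presents no serious obstacle. It is worth noting, though, why a plain focusing example does not suffice: taking $f=\chi_{\mathbb B^2(x_0,\delta)}$ against a normalized $\alpha$-dimensional measure spread over a $\delta$-neighborhood of the relevant piece of the light cone only forces $p\ge 2$, which is strictly weaker than $p\ge 4-\alpha$ when $\alpha<2$. The sharp input for $1<\alpha\le 2$ is instead the Knapp-type example underlying \ref{nnece1-1} --- a $\sqrt\delta\times\delta$ plank $f$ tangent to a circle, tested against an $\alpha$-dimensional measure assembled from $\sim\delta^{-1/2}$ parallel $\delta$-tubes that fill the slab $\{(x,t):\mathcal A f(x,t)\gtrsim\sqrt\delta\}$ --- whereas for $2<\alpha\le 3$ the focusing example with a measure of full strength $\langle\nu\rangle_\alpha\sim1$ on the $\delta$-neighborhood of the cone is already sharp and produces \ref{nnece1-2}. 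Specializing either of these examples to the diagonal $p=q$ yields $p\ge 4-\alpha$ directly, which would be a self-contained alternative; but given \ref{nnece1-1}--\ref{nnece1-2}, the two-line argument above is the most economical route.
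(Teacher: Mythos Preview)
Your reduction has a genuine gap: the restriction $p\le q$ is a necessary condition only for the \emph{global} estimate \eqref{nu-avr-d}, not for the \emph{local} estimate \eqref{nu-avr-loc} that Proposition~\ref{prop:nec} is stated for. Translation invariance in $x$ forces $p\le q$ only when the $L^q$ norm on the output side is taken over all of $\mathbb R^d$; once the spatial domain is restricted to $\mathbb B^d(0,1)$, the local estimate can hold with $p>q$ (for instance $(p,q)=(\infty,q)$ for any finite $q$, since $\mathcal A f$ is bounded and $\nu$ has finite mass on the ball). So you cannot feed $q^{-1}\le p^{-1}$ into \ref{nnece1-1} or \ref{nnece1-2}. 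The paper itself makes exactly this observation just before the proof: it says that for the global estimate \ref{nnece5} is redundant via the algebra you wrote, but then gives a separate direct construction for the local statement.

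The paper's proof of \ref{nnece5} builds examples whose $L^q(d\nu)$ side is of order one independently of $q$, which is what lets the resulting inequality constrain $p$ alone. For $1<\alpha\le2$ it takes $\sim\delta^{1-\alpha}$ angularly $\delta^{\alpha-1}$-separated directions $\theta_\ell$, sets $f=\chi_{\mathrm R_\delta}$ with $\mathrm R_\delta$ the union of the corresponding $\delta^2\times\delta$ rectangles through the origin, and puts $d\nu=\delta^{\alpha-4}\chi_{\mathrm T_\delta}\,dxdt$ on the union of the matching tubes in the cone; then $\nu(\mathrm T_\delta)\gtrsim1$, $\mathcal A f\gtrsim\delta$ on $\mathrm T_\delta$, and $\|f\|_p\lesssim\delta^{(4-\alpha)/p}$, giving $\delta\lesssim\delta^{(4-\alpha)/p}$. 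For $2<\alpha\le3$ it instead takes $\sim\delta^{2-\alpha}$ centers $\omega_j$ on a line, $f$ the union of $\delta$-balls at the $\omega_j$, and $d\nu=\delta^{\alpha-3}\chi_{\mathcal T_\delta}\,dxdt$ on the union of the corresponding cone pieces; again $\nu(\mathcal T_\delta)\sim1$ and the same inequality results. These are genuinely different from the single-plank and single-ball examples behind \ref{nnece1-1} and \ref{nnece1-2}; the multiplicity of pieces is precisely what makes the measure have $\nu$-mass $\sim1$ while keeping $\langle\nu\rangle_\alpha\lesssim1$, and this is the missing idea in your proposal.
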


It seems to be natural to expect that   \eqref{nu-avr-loc} holds for  $\nu \in \mathfrak C^{d+1}(\alpha)$   only if, instead of \ref{nnece4} and \ref{nnece4-1},
\begin{align}\label{nnece3}
p^{-1} \le
\begin{cases}
\frac{d+\alpha-2}{d+\alpha}, ~ & ~   \hfill    d-2<\alpha \le d, \\[.8ex]
\frac{\alpha}{\alpha+1}, ~ & ~   \hfill 1<\alpha \le d-2.
\end{cases}
\end{align}
When $\alpha=k+1$ ($k=1,\dots,d-1$), \eqref{nnece3} matches with the second conditions in \ref{nnece4} and \ref{nnece4-1}.
In view of the global estimate \eqref{nu-avr-d} with $d=2$,  the condition \ref{nnece5} is redundant since \eqref{nu-avr-d} holds only if $p\le q$. In fact, the conditions \ref{nnece1}, \ref{nnece1-1}, \ref{nnece1-2} combined with $p \le q$ show  that \eqref{nu-avr-d} with $d=2$ fails unless \ref{nnece5} holds.

\subsection*{Proof of Proposition \ref{prop:nec}}
Let us assume that \eqref{nu-avr-loc} holds for all $\nu \in \mathfrak C^{d+1}(\alpha)$ with $1<\alpha\le d+1$. 
We show the necessity of the conditions \ref{nnece1}--\ref{nnece5}
by considering specific measures contained in  $\mathfrak C^{d+1}(\alpha)$ and functions which show 
the failure of \eqref{nu-avr-loc} unless the conditions hold.  

In what follows,  $0<\delta \ll 1$ and $c$ is a small positive constant.

\begin{proof}[Proof of \ref{nnece1}] 
Let $N_\delta=\{ x: ||x|-1|<\delta\}$ and set $f = \chi_{N_\delta}$. Clearly,
$ \mathcal A f(x,t )  \gtrsim 1$ if $( x,t ) \in    \mathbb B^d(0, c \delta)\times [1, 1+ c\delta]$. 
Let $d\nu(x,t) = \psi(x,t) |x|^{\alpha - d-1} dx dt$ for $\psi \in \mathrm C_0^\infty(\mathbb B^{d+1}(0,3))$.
It is easy to see $ \langle \nu \rangle_\alpha \lesssim 1$. Then \eqref{nu-avr-loc} implies $\delta^{\frac\alpha q} \lesssim \delta^{\frac 1p}$.
Thus, letting $\delta\to 0$, we get $1/p \le \alpha/ q$. 
\end{proof}

\begin{proof}[Proof of \ref{nnece1-1}] 
Let us set 
\[  V_\delta=\{(x_1,x',t) \in \mathbb R \times \mathbb R^{d-1} \times [1,2]: |x_1-t|\le c \delta^2, |x'|\le  c \delta\}.\]
We also define a measure $\nu$ by 
\[
d\nu (x,t) = \begin{cases}
\, \delta^{\alpha-d-2} \chi_{V_\delta}\,dxdt, ~\hfill \quad 1<\alpha \le d, \phantom{111} \\[.7ex]
\, \delta^{2(\alpha-d-1)} \chi_{V_\delta}\,dxdt,~ \hfill \quad  d<\alpha \le d+1 .
\end{cases}
\]
One can easily show $\langle \nu \rangle_\alpha \lesssim 1$ for $1<\alpha \le d+1$. 
Indeed, when $1<\alpha \le d$,    
$\nu(\mathbb B^{d+1}(z,r))\lesssim \delta^{\alpha-d-2}r^{d+1}$ for $0<r<\delta^2$, 
$\nu(\mathbb B^{d+1}(z,r))\lesssim \delta^{\alpha-d} r^{d}$ for $\delta^2<r<\delta$, and
$\nu(\mathbb B^{d+1}(z,r))\lesssim \delta^{\alpha-1}r$ for $\delta <r$.
So we have $\nu(\mathbb B^{d+1}(z,r)) \lesssim r^\alpha$ in each case.
The case  $d<\alpha \le d+1$ can be handled similarly.

We consider $f= \chi_{U_\delta}$ for $ U_\delta= [0,\delta^2] \times [0,\delta]^{d-1}$.   
Then, we have $\mathcal Af(x,t) \gtrsim \delta^{d-1}$  if $(x,t) \in V_\delta$.
Note $\nu (V_\delta) \sim  \delta^{\alpha-1}$ for $1<\alpha \le d$, and $\nu(V_\delta) \sim \delta^{2\alpha-d-1}$ for $d<\alpha \le d+1$.
Since $\|f\|_p =\delta^{\frac{d+1}{p}}$, \eqref{nu-avr-loc} implies $\delta^{d-1} \delta^{\frac{\alpha-1}{q}}
\lesssim \delta^{\frac{d+1}{p}}$ for $1<\alpha \le d$, which gives \ref{nnece1-1} for $1<\alpha \le d$.
When $d<\alpha\le d+1$, we get \ref{nnece1-1} in the same manner. 
\end{proof}

\begin{proof}[Proof of \ref{nnece1-2}]
Let us take $f = \chi_{\mathbb B^d(0,\delta)}$ so that $\| f\|_p \lesssim \delta^{d/ p}$.
Then we have $\mathcal Af \gtrsim \delta^{d-1} \chi_{S_\delta}$ where
$ S_\delta = \{ (x,t):  ||x|-t| \le c\delta,~ t \in [1,2] \}$  for a small $c>0$. 
Let 
\[
d\nu(x,t)= \begin{cases} \delta^{-1} \chi_{S_\delta} dx dt   , ~ \hfill \quad 1 < \alpha \le d , \phantom{111}  \\[.7ex]
\,  \delta^{\alpha-d -1} \chi_{S_\delta} dx dt, ~ \hfill \quad d<\alpha \le d+1.
\end{cases}
\]
It is not difficult to see $\langle \nu \rangle_\alpha \lesssim 1$. 
When $d<\alpha \le d+1$, $\nu(\mathbb B^{d+1}(z,r))\lesssim \delta^{\alpha-d-1}r^{d+1}$ for $0<r \le \delta$, and $\nu(\mathbb B^{d+1}(z,r))\lesssim \delta^{\alpha-d}r^d$  for $\delta<r$. The case $1<\alpha \le d$ can be treated similarly and we omit the detail.
Since $|S_\delta| \sim \delta$, we have $\nu( S_\delta) \sim 1$ for $1<\alpha \le d $, and  $\nu( S_\delta) \sim \delta^{\alpha-d}$ for $d<\alpha \le d+1$. So, 
\eqref{nu-avr-loc} implies $\delta^{d-1}\lesssim \delta^{d/ p}$ for $1<\alpha \le d $, and  $\delta^{d-1+(\alpha-d)/q}\lesssim \delta^{d/ p}$ for  $d<\alpha \le d+1$. 
Letting $\delta\to 0$ yields  \ref{nnece1-2}.
\end{proof}

\begin{proof}[Proof of \ref{nnece4}] We 
modify  the example in the proof of \ref{nnece1-1}.
For a given $\alpha \in (1,d]$, let $k \in [1,d-1]$ be an integer such that  $k<\alpha \le k+1$.
Let  $U_\delta^k= [0,\delta^2]^k \times [0,\delta]^{d-k}$. We have
\Be
\label{lower} \mathcal A  \chi_{U_\delta^k}  \gtrsim \delta^{2(k-1)+ d-k} \chi_{V_\delta^k} ,
\Ee
where 
\[
V_\delta^k
:=  \big\{ (x',x'',t) \in \mathbb R^k \times \mathbb R^{d-k} \times [1,2] : \quad~  ||x'|-t| \le c \delta^2, \quad~
|x''| \le c \delta \big\}.
\]

Let us set $d\nu=\delta^{\alpha-d-2}  \chi_{V_\delta^k} dxdt$. In the same manner as before  it is easy to show  $\langle \nu \rangle_\alpha \lesssim 1$. 
 Since  $\nu(V_\delta^k) \sim \delta^{\alpha-k}$ and $\|  \chi_{U_\delta^k} \|_p \lesssim \delta^{(d+k)/{p}}$, \eqref{nu-avr-loc} and \eqref{lower} yield  $\delta^{d+k-2}\delta^{(\alpha-k)/{q}} \lesssim  \delta^{(d+k)/{p}}$. This gives the first part of \ref{nnece4}.

For the second part of \ref{nnece4}, taking  $f= \chi_{U_{\delta}^{k+1}} $ instead of $ \chi_{U_{\delta}^{k}} $, we have
 $ \mathcal A f  \gtrsim \delta^{d+k-1} \chi_{V_\delta^{k+1}} $.
We consider $d\nu = \delta^{k-d-1}  \chi_{V_\delta^{k+1}} dxdt$. Then, similarly as before, one can easily see  $\langle  \nu \rangle_\alpha \lesssim 1$ and $\nu(V_\delta^{k+1}) \gtrsim1$. 
 Since $\| \chi_{U_\delta^{k+1}} \|_p \lesssim \delta^{(d+k+1)/{p}}$, \eqref{nu-avr-loc} implies $\delta^{d+k-1}\lesssim \delta^{(d+k+1)/{p}}$.
This shows the second part of \ref{nnece4}.
\end{proof}

\begin{proof}[Proof of \ref{nnece4-1}]
We show \ref{nnece4-1} by combining  the examples considered while proving  \ref{nnece1} and \ref{nnece1-2}. 
Let $U_k$ be the $\delta$-neighborhood of the set $\{ (0,\tilde z) \in \mathbb R^k \times \mathbb R^{d-k} : |\tilde z|=1\}$.
We set $f= \chi_{U_k}$ so that $\|f\|_p \lesssim \delta^{\frac {k+1}p}$.
Parametrizing $\mathbb S^{d-1}$, we have 
\[
 \mathcal A f(x,t )  \ge    \int_{|y| \le 1/2} \chi_{U_k} 
\big( x- t(y ,\sqrt{1-|y|^2}) \big) \,dy   . 
\]
Denoting $x=(x',x'',x_d) \in \mathbb R^k \times \mathbb R^{d-k-1}\times \mathbb R$, we set 
\[
V_k = \big\{ (x,t) : \big| |x'|-\sqrt{t^2-1} \big| \le c \delta ,~ |x''|,  |x_d|\le c \delta, ~ t \in [3/2,2] \big\}.
\]

We write $y  = (y',y'')\in \mathbb R^k \times \mathbb R^{d-k-1}$. 
Then, we  claim 
\begin{equation}\label{cl} x-t(y',y'',\sqrt{1-|y|^2}) \in U_k \end{equation}
for $t\in [3/2,2]$ and $|y''|\le 1/2 $
 if $(x,t)  \in V_k$ and  $|x'-ty'|\le c\delta$. 
Since $|t^{-1}x'- y'|\le c\delta$  and $| |x'| - \sqrt{t^2-1}|\le  c\delta$, it is clear that $||y'|-  t^{-1}{\sqrt{t^2-1}}|<  3c \delta$.
This gives $t|(y'',\sqrt{1-|y|^2})|=t\sqrt{1-|y'|^2}=1+O(c\delta)$.
Thus,  for  $|x''|, |x_d| \le c \delta$ we have
\[|(x'',x_d)-t(y'', \sqrt{1-|y'|^2-|y''|^2} )|\in (1-2^{-1}\delta, 1+2^{-1}\delta)
\]
if $c$ is small enough. This proves \eqref{cl}.  
Since $y' = x'/t+O(c\delta) $ and  $|y''| \lesssim 1$, \eqref{cl} implies 
\[  \mathcal A f   \gtrsim \delta^k  \chi_{V_k} .\]
Let $d\nu=\delta^{\alpha-d-1} \chi_{V_k}dxdt$. Then  we have $\nu(V_k) \gtrsim \delta^{\alpha-k}$, and $\langle \nu \rangle_\alpha \lesssim1$ since 
 $\nu(\mathbb B^{d+1}(z,r)) \lesssim \delta^{\alpha-d-1}r^{d+1}$ if $0<r \le \delta$
and $\nu(\mathbb B^{d+1}(z,r))\lesssim \delta^{\alpha-k}r^k$ if $\delta<r$.
So  \eqref{nu-avr-loc} gives  $\delta^k \delta^{\frac{\alpha-k}q} \lesssim \delta^{\frac{k+1}p}$.  Hence the first part of \ref{nnece4-1} follows if we take $\delta\to 0$.

To prove the second part of \ref{nnece4-1}, 
taking $f=\chi_{U_{k+1}}$, we have
$\|f\|_p \lesssim \delta^{\frac{k+2}p}$, and $\mathcal A  \chi_{U_{k+1} }  \gtrsim \delta^{k+1}$ whenever
$(x,t) \in V_{k+1}$.
Considering $d\nu = \delta^{k-d}\chi_{V_{k+1}}dxdt$, we have $\langle \nu \rangle_\alpha \lesssim 1$ and $\nu(V_{k+1}) \gtrsim 1$ since $\nu(\mathbb B^{d+1} (z,r) ) \lesssim \delta^{k-d}r^{d+1}$ for $0<r\le \delta$, and $\nu(\mathbb B^{d+1}(z,r)) \lesssim \delta r^k$ for $\delta<r$. Therefore, \eqref{nu-avr-loc} implies $\delta^{k+1} \lesssim \delta^{\frac{k+2}p}$ for $0<\delta \ll1$.
This gives the second part of \ref{nnece4-1}.
\end{proof}

\begin{proof}[Proof of \ref{nnece5}] Let us consider the case $1<\alpha \le 2$ first. 
Let $\{ \theta_\ell \}$ be a collection of $\sim C\delta^{1-\alpha}$ points 
on $\mathbb S^{1}$ which are separated by $\delta^{\alpha-1}$. For each $\theta_\ell$, we set 
\[
\mathrm T_\ell(\delta)=\big\{ (x,t) \in \mathbb R^2 \times [1,2]:  | |x| -t| \le c\delta^2,~  \big|x/|x| -\theta_\ell \big| \le c
\delta \big\}.
\] 
Let $\mathrm T_\delta=\cup_{\ell} \mathrm T_\ell(\delta)$. We consider 
$d\nu(x,t) = \delta^{\alpha-4} \chi_{\mathrm T_\delta}(x,t)\,dxdt$, 
then $\langle \nu \rangle_\alpha \lesssim 1$.
In fact, one can easily see
\begin{align*}
\nu(\mathbb B^3(z,r))\le
C\begin{cases}
\ \ \ \ \delta^{\alpha-4} r^3, \quad\quad & \quad \ \ 0<r\le \delta^2,\\
\ \ \ \ \delta^{\alpha-2}r^2, \quad\quad &\ \  \ \, \delta^2<r \le \delta,\\
\ \ \ \ \delta^{\alpha-1}r, \quad\quad & \ \ \  \  \ \delta <r \le \delta^{1-\alpha},\\
\delta^{\alpha-1}r  \times (r/ \delta^{\alpha-1} ), \quad\quad & \delta^{\alpha-1} < r < 1,\\
\delta^{\alpha-1}   \times (r/ \delta^{\alpha-1}), \quad\quad &\quad \ \ 1  < r .
\end{cases}
\end{align*}
Therefore, we have $\nu(\mathbb B^3(z,r)) \lesssim r^\alpha$.  

Let  $\mathrm R(\theta, \delta)$ be a rectangle of side length $\delta^2 \times \delta$ centered at the origin such that the short side is parallel to  $\theta\in \mathbb S^1$. 
Setting $\mathrm R_\delta := \cup_\ell \mathrm R(\theta_\ell, \delta)$, we have $\mathcal A \chi_{\mathrm R_\delta}  \gtrsim  \delta  \chi_{\mathrm T_\delta}$. 
Since $\|\chi_{\mathrm R_\delta} \|_p \lesssim \delta^{(4-\alpha)/p}$ and $\nu(\mathrm T_\delta) \gtrsim 1$, we get $\delta \lesssim \delta^{(4-\alpha)/p}$ which implies $p \ge 4-\alpha$.  

Now, we consider the case $2<\alpha \le 3$. Let $\{\omega_j \} \subset \mathbb R^2$ be a collection of $\sim C\delta^{2-\alpha}$ points  on the $x_1$-axis which are separated by $\delta^{\alpha-2 }$. 
For each $\omega_j$, we denote 
\[
\mathbf C_j(\delta) = \{ (x,t) \in \mathbb R^2 \times [1,2] : ||x-\omega_j|-t| \le c\delta, \quad   |x_2|\le 2^{-3}    \}.
\]
We set $\mathcal T_\delta=\cup_j \mathbf C_j(\delta)$ and 
$
d\nu(x,t)= \delta^{\alpha-3} \chi_{\mathcal T_\delta}(x,t)\,dxdt.
$
Then, it is easy to  see  $\langle \nu \rangle_\alpha \lesssim 1$ and $\nu(\mathcal T_\delta)\sim1$.
Indeed, 
\begin{align*}
\nu(\mathbb B^3(z,r)) \le C 
\begin{cases}
\ \ \ \ \delta^{\alpha-3}r^3, \quad &\quad  \ \ 0 <r\le\delta,\\
\ \ \ \ \delta^{\alpha-2}   r^2 \quad &\quad \ \  \delta<r<\delta^{ \alpha-2 },\\
\delta^{\alpha-2}r^2\times (r/\delta^{ \alpha-2}), \quad &\delta^{ \alpha-2 }< r<1,\\
\delta^{\alpha-2} \times (r/\delta^{ \alpha-2}), \quad &\quad \ \ 1 < r.
\end{cases}
\end{align*}
So, it follows that $\nu(\mathbb B^3(z,r)) \lesssim r^\alpha$.
Also we have
\[\mathcal A\chi_{\cup_j \mathbb B^2(\omega_j, c\delta)} (x,t)\gtrsim \delta, \quad (x,t) \in \mathcal T_\delta.\]  
Since $|\cup_j \mathbb B^2(\omega_j, c\delta)|\lesssim \delta^{4-\alpha} $, \eqref{nu-avr-loc} implies 
$\delta \lesssim \delta^{\frac{4-\alpha}p}$. Thus, we see  $p\ge 4-\alpha$.
\end{proof}

\subsection*{Acknowledgement} 
This work was supported by the NRF (Republic of Korea) grants  No. 2017R1C1B2002959 (Ham), No. 2019R1A6A3A01092525 (Ko), and No.  2021R1A2B5B02001786 (Lee).

\bibliographystyle{plain}

\end{document}